\newtheorem{thm}{Theorem}[section]
\newtheorem{lemma}[thm]{Lemma}
\newtheorem{corollary}[thm]{Corollary}
\newtheorem{prop}[thm]{Proposition}
\newtheorem{conjecture}[thm]{Conjecture}
\theoremstyle{definition}
\newtheorem{remark}[thm]{Remark}
\newtheorem{question}[thm]{Question}
\newcommand{\Ker}{\mathrm{Ker}}
\newcommand{\PSLp}{\mathrm{PSL}_2(\ZZ[\frac{1}{p}])}
\newcommand{\SLp}{\mathrm{SL}_2(\ZZ[\frac{1}{p}])}
\newcommand{\SLm}{\mathrm{SL}_2(\ZZ[\frac{1}{m}])}
\newcommand{\PSL}{\mathrm{PSL}}
\newcommand{\SL}{\mathrm{SL}}
\newcommand{\GL}{\mathrm{GL}}
\newcommand{\HH}{\mathbb{H}}
\newcommand{\ZZ}{\mathbb{Z}}
\newcommand{\RR}{\mathbb{R}}
\newcommand{\CC}{\mathbb{C}}
\newcommand{\QQ}{\mathbb{Q}}
\newcommand{\calo}{\mathcal{O}}
\newcommand{\calr}{\mathcal{R}}
\newcommand{\repr}{\calr_\RR}
\newcommand{\repc}{\calr_\CC}
\newcommand{\reph}{\calr_\HH}
\newcommand{\calf}{\mathcal{F}}
\newcommand{\orbf}{\mathbf{Or}_\calf(\Gamma)}
\newcommand{\sym}{{\rm Sym}}
\newcommand{\FIN}{\mathcal{FIN}}
\newcommand{\bfE}{\mathbf{E}}
\newcommand{\Top}{\mathbf{Top}}
\newcommand{\spectra}{\mathbf{Spectra}}
\newcommand{\ko}{\mathbf{ko}}
\newcommand{\KO}{\mathbf{KO}}
\DeclareMathOperator{\hocolim}{hocolim}
\DeclareMathOperator{\Tor}{\mathrm{Tor}}
\newcommand{\calt}{\mathcal{T}}
\tikzstyle{blackNode}=[fill=black, draw=black, shape=circle]
\title{On the equivariant $K$- and $KO$-homology of some special linear groups}
\author{Sam Hughes}
\date{\today}
\begin{document}
\maketitle

\begin{abstract}
We compute the equivariant $KO$-homology of the classifying space for proper actions of $\textrm{SL}_3(\mathbb{Z})$ and $\textrm{GL}_3(\mathbb{Z})$. We also compute the Bredon homology and equivariant $K$-homology of the classifying spaces for proper actions of $\textrm{PSL}_2(\mathbb{Z}[\frac{1}{p}])$ and $\textrm{SL}_2(\mathbb{Z}[\frac{1}{p}])$ for each prime $p$.  Finally, we prove the Unstable Gromov-Lawson-Rosenberg Conjecture for a large class of groups whose maximal finite subgroups are odd order and have periodic cohomology.
\end{abstract}

\section{Introduction}
There has been considerable interest in the Baum-Connes conjecture, which states that for a group $\Gamma$ a certain `assembly map', from the equivariant $K$-homology of the classifying space for proper actions $\underbar{E}\Gamma$ to the topological $K$-theory of the reduced group $C^*$-algebra, is an isomorphism \cite{BaumConnesHigson1994}.  The Baum-Connes conjecture is known to hold for several families of groups, including word-hyperbolic groups, ${\rm CAT}(0)$-cubical groups and groups with the Haagerup property.  An excellent survey can be found in \cite{AparicioJulgValette20}. 

\begin{conjecture}[The Baum-Connes Conjecture]
Let $\Gamma$ be a discrete group, then the following {\emph assembly map} is an isomorphism 
\[\mu:K^\Gamma_\ast(\underbar{\emph{E}}\Gamma)\rightarrow K^{\rm top}_\ast(C^\ast_r(\Gamma)).\]
\end{conjecture}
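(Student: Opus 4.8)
The plan is to approach the assembly map through Kasparov's equivariant $\mathrm{KK}$-theory, following the Dirac--dual-Dirac method. First I would reinterpret both sides homologically: the left-hand side $K^\Gamma_*(\ulEG)$ is identified with the topological $K$-theory $\varinjlim_{X} \mathrm{KK}^\Gamma_*(C_0(X),\CC)$, where the colimit runs over the $\Gamma$-cocompact subsets $X$ of $\ulEG$, while the target is $\mathrm{K}_*(C^\ast_r(\Gamma))$. Kasparov's descent homomorphism together with the Kasparov product then realises $\mu$ as a product with a canonical class, so that establishing the conjecture reduces to a statement internal to $\mathrm{KK}^\Gamma$.

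The central object is the \emph{gamma element}. I would construct a proper $\Gamma$-$C^\ast$-algebra $A$---concretely $C_0$ of a space on which $\Gamma$ acts properly and isometrically, such as a Hilbert space, a $\mathrm{CAT}(0)$ space, or a Rips complex---together with a Dirac element $d \in \mathrm{KK}^\Gamma(A,\CC)$ and a dual-Dirac element $\eta \in \mathrm{KK}^\Gamma(\CC,A)$. Their Kasparov product $\gamma = \eta \otimes_A d \in \mathrm{KK}^\Gamma(\CC,\CC)$ is the gamma element. The key step is the standard factorisation lemma: the mere existence of $\gamma$ already yields split injectivity of $\mu$, and $\mu$ is an isomorphism whenever $\gamma = 1$. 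Thus the whole problem collapses to verifying the identity $\gamma = 1$.

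For the classes where the conjecture is provable, this final step is where the geometry enters. For groups with the Haagerup property one follows Higson--Kasparov: a proper affine isometric action on a real Hilbert space supplies $A$, and an infinite-dimensional Bott periodicity argument forces $\gamma = 1$. For word-hyperbolic and certain higher-rank groups one must leave the purely $C^\ast$-algebraic framework and work in Lafforgue's Banach $\mathrm{KK}$-theory, where the analogous element can be shown to act as the identity on the relevant family of Banach representations.

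The hard part---and the reason the conjecture remains open in full generality---is property (T). For such groups $\gamma$ is genuinely nontrivial in $\mathrm{KK}^\Gamma(\CC,\CC)$, the Dirac--dual-Dirac method cannot close as stated, and indeed the Higson--Lafforgue--Skandalis examples show the conjecture fails with coefficients. Since several of the groups studied here, notably $\SL_3(\ZZ)$, have property (T), any complete argument for them must rely on Lafforgue's strengthening rather than the Hilbert-space gamma element; isolating the representations that detect the obstruction from those that do not is precisely the main obstacle.
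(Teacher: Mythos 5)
You have been asked to ``prove'' something that the paper, quite deliberately, never proves: the statement is the Baum--Connes Conjecture itself, stated in the introduction purely as background and motivation, and it is open in general. Your proposal correctly recognises this --- the Dirac--dual-Dirac outline you give (identifying $K^\Gamma_\ast(\ulEGthm)$ with the colimit of $\mathrm{KK}^\Gamma_\ast(C_0(X),\CC)$ over cocompact pieces, factoring $\mu$ through descent and the Kasparov product, reducing to the existence of a $\gamma$-element with $\gamma=1$) is an accurate account of the standard machinery, and your diagnosis of where it breaks is the right one: for property (T) groups such as $\SL_3(\ZZ)$ the $\gamma$-element, when it exists, need not equal $1$ in $\mathrm{KK}^\Gamma(\CC,\CC)$, the Hilbert-space method of Higson--Kasparov cannot close, and the paper itself emphasises that Baum--Connes is still open for $\SL_n(\ZZ)$, $n\geq 3$, citing the Higson--Lafforgue--Skandalis counterexamples for related groupoids. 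So the only ``gap'' in your argument is the unavoidable one --- the step $\gamma=1$ is precisely the open problem --- and it would be wrong to present the proposal as more than a survey of partial results.

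It is worth comparing your framework with what the paper actually does in the one place it needs Baum--Connes as an input, namely Lemma~\ref{lem.BC} for $\SLp$ and $\PSLp$. There the author avoids constructing any $\gamma$-element: the group acts on the Bass--Serre tree of the splitting $\PSL_2(\ZZ)\ast_{\Gamma_0(p)}\PSL_2(\ZZ)$, a $1$-dimensional contractible complex whose stabilisers are graphs of finite groups with $\mathrm{cd}_\QQ\leq 1$, and then cites the permanence results \cite[Corollary~5.14, Theorem~5.13]{MislinNotes} to conclude. This is far more elementary than your route (though of course those permanence theorems ultimately rest on the Higson--Kasparov analysis you describe), and it illustrates the practical moral: for specific groups one verifies Baum--Connes by exhibiting favourable geometry and invoking closure properties, not by re-running the Dirac--dual-Dirac argument. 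Your suggestion that a proof for $\SL_3(\ZZ)$ would have to go through Lafforgue's Banach-algebraic strengthening is reasonable speculation, but note that even that is not currently known to apply to the non-cocompact lattice $\SL_3(\ZZ)$; the paper sidesteps the issue entirely by computing only the left-hand side $KO^\Gamma_\ast(\ulEGthm)$ there.
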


There is also a `real' Baum-Connes conjecture which predicts that an assembly map from the equivariant $KO$-homology of $\underbar{E}\Gamma$ to the topological $K$-theory of the real group $C^\ast$-algebra is an isomorphism.  It is known that the Baum-Connes Conjecture implies the Real Baum-Connes Conjecture \cite{BK2004}.

\begin{conjecture}[The Real Baum-Connes Conjecture]
Let $\Gamma$ be a discrete group, then the following {\emph assembly map} is an isomorphism 
\[\mu_\RR:KO^\Gamma_\ast(\underbar{\emph{E}}\Gamma)\rightarrow KO^{\rm top}_\ast(C^\ast_r(\Gamma)).\]
\end{conjecture}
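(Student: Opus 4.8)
The plan is to prove the statement directly for an arbitrary discrete $\Gamma$ by the Dirac--dual-Dirac method of Kasparov, executed in \emph{real} equivariant $KK$-theory. First I would fix a model of $\ulEG$ as a proper $\Gamma$-$CW$-complex and identify the source of $\mu_\RR$ with the colimit $\varinjlim_{X} KKO^\Gamma_{\ast}\bigl(C_0(X),\RR\bigr)$ over the $\Gamma$-compact subsets $X \subseteq \ulEG$. In this model $\mu_\RR$ is the composite of Kasparov's descent homomorphism with the map to $KO_\ast\bigl(C^\ast_r(\Gamma)\bigr)$ induced by $C_0(X)\rtimes_r\Gamma \to C^\ast_r(\Gamma)$, so that the theorem becomes the assertion that this composite is invertible for every $\Gamma$.

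The engine of the argument is a $\gamma$-element $\gamma \in KKO^\Gamma(\RR,\RR)$, constructed as the Kasparov product $\gamma = \eta \otimes_{B} D$ of a Dirac class $D \in KKO^\Gamma(B,\RR)$ and a dual-Dirac class $\eta \in KKO^\Gamma(\RR,B)$, where $B=C_0(Z)$ for an auxiliary proper $\Gamma$-space $Z$ of nonpositive-curvature type. The descent of $D$ furnishes a one-sided inverse to $\mu_\RR$, so the formal machinery already yields that $\mu_\RR$ is split injective; the remaining content is the surjectivity, which follows the moment one proves the homotopy $\gamma = 1$ in $KKO^\Gamma(\RR,\RR)$.

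The hard part --- and indeed the only part --- is the equality $\gamma = 1$ for a \emph{general} discrete group. My plan is to produce the proper space $Z$, the classes $\eta, D$, and the homotopy $\gamma \simeq 1$ from a geometric hypothesis on $\Gamma$, such as a proper isometric action on a $\mathrm{CAT}(0)$ or bolic space or the Haagerup property, in each of which $Z$ and the homotopy can be written explicitly. To lighten the real $KK$-computations I would work complex and then descend, using the comparison of Baum and Karoubi \cite{BK2004}: complexification carries the real $\gamma$ to the complex one, so establishing $\gamma = 1$ in $KK^\Gamma(\CC,\CC)$ suffices. The obstacle that keeps this plan from closing uniformly is exactly that no construction of $Z$ and no proof of $\gamma = 1$ is known without such an input --- for groups with property (T) the dual-Dirac step degenerates --- and bridging that gap for \emph{all} discrete $\Gamma$ is the open core of the conjecture.
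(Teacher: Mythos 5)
The statement you were asked to prove is labelled a \emph{Conjecture} in the paper, and the paper offers no proof of it: this is the Real Baum--Connes Conjecture, which is open for general discrete groups, and the paper's only remark about it is the citation of \cite{BK2004} for the fact that the complex conjecture implies the real one. So the honest assessment is that your proposal contains a genuine gap --- indeed \emph{the} genuine gap --- and to your credit you name it yourself in the final paragraph: no construction of the auxiliary proper space $Z$, of the dual-Dirac element $\eta$, or of the homotopy $\gamma=1$ is known for an arbitrary discrete group. But note that the damage reaches further back into your outline than you acknowledge. The split injectivity of $\mu_\RR$ that you claim comes ``from the formal machinery'' already presupposes the \emph{existence} of a $\gamma$-element, i.e.\ of the pair $(\eta,D)$ over a suitable $Z$, and no such element is known to exist for a general discrete group; unconditional (split) injectivity of the assembly map is itself an open problem, closely tied to the Novikov conjecture. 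So nothing in your argument is unconditional for general $\Gamma$: both halves, not just the surjectivity half, require the geometric input you defer.

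Two further points. First, your reduction to the complex case via \cite{BK2004} is legitimate in direction --- complex Baum--Connes implies real Baum--Connes --- but it only relocates the open problem, since $\gamma=1$ in $KK^\Gamma(\CC,\CC)$ is equally unknown in general and provably \emph{fails} for infinite property (T) groups whenever $\gamma$ exists; this is precisely why the paper emphasises that its $KO$-computation for $\SL_3(\ZZ)$ concerns a property (T) group for which the (real) conjecture remains open, and why Lafforgue's Banach $KK$-methods, which go beyond the $\gamma=1$ template, were needed even for special property (T) lattices. Second, there is structural evidence against any uniform strategy of the kind you sketch: the conjecture with coefficients and its groupoid variants admit counterexamples built from property (T) groups such as $\SL_3(\ZZ)$ \cite{HLS02}. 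In short, your outline correctly reproduces the Dirac--dual-Dirac template that \emph{does} prove the conjecture under hypotheses such as the Haagerup property \cite{HigsonKasparov1997}, and it is a fair summary of the state of the art; but as a proof of the statement as given it is incomplete at exactly the step you flag, and that step is not a technical lacuna to be filled in later --- it is the open problem itself, which neither you nor the paper resolves.
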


In spite of the interest, to date there have been very few computations of $K^\Gamma$- and $KO^\Gamma$-homology. Indeed, for $K^\Gamma$-homology there are complete calculations for one relator groups \cite{MislinNotes}, NEC groups \cite{luck2000computations}, some Bianchi groups and hyperbolic reflection groups \cite{LORS2018, Rahm:Bianchi, Rahm:Bianchi2}, some Coxeter groups \cite{DL17,SanchezGarcia:Coxeter,SanchezGarcia:PhD}, Hilbert modular groups \cite{SaldanaVelasquez2018}, $\SL_3(\ZZ)$ \cite{SanchezGarcia:SL}, and $\PSL_4(\ZZ)$ \cite{PSL4Z}.  Explicit assembly maps have also been computed for solvable Baumslag-Solitar groups \cite{PooyaValette18}, lamplighter groups of finite groups \cite{FloresPooyaValette17} and certain wreath products \cite{Pooya19,Li19}.  For $KO^\Gamma$-homology the author is aware of two complete computations; the first, due to Davis and L\"uck, on a family of Euclidean crystallographic groups \cite{DL2013}, and the second, due to Mario Fuentes-Rum\'i, on simply connected graphs of cyclic groups of odd order and of some Coxeter groups \cite{fuentesrumi2019equivariant}.

In this paper  we compute the equivariant $K$-homology of $\SLp$ and the equivariant $KO$-homology of $\SL_3(\ZZ)$.  We give the relevant background and the connection to Bredon homology in Section~\ref{Sec.bckgrd}.  

The calculation for $KO^\Gamma$-homology is of particular interest because it is (to the author's knowledge) the first computation of $KO^\Gamma_\ast$ for a property (T) group.  For background on property (T) the reader may consult the monograph \cite{BekkaHarpeValette2008}.  This interest stems from the fact that property (T) is a strong negation of the Haagerup property which implies Baum-Connes \cite{HigsonKasparov1997}.  Moreover, the (real) Baum-Connes conjecture is still open for $\SL_n(\ZZ)$ when $n\geq3$.  We note that there are counterexamples for the Baum-Connes conjecture for groupoids constructed from $\SL_3(\ZZ)$ and more generally a discrete group with property (T) for which the assembly map is known to be injective \cite{HLS02}.

\begin{thm}[Theorem~\ref{thm.KO.sl3.body}]\label{thm.KO.sl3}
Let $\Gamma=\SL_3(\ZZ)$, then for $n=0,\dots,7$ we have
\[KO^\Gamma_{n}(\underbar{\emph{E}}\Gamma)= \ZZ^8,\quad \ZZ_2^8,\quad \ZZ_2^8,\quad 0,\quad \ZZ^8, \quad 0, \quad 0, \quad 0  \]
and the remaining groups are given by $8$-fold Bott-periodicity.
\end{thm}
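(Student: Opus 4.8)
The plan is to show that $KO^\Gamma_\ast(\ulEG)$ is a free module of rank $8$ over $KO_\ast(\mathrm{pt})$, which is exactly the asserted list of groups. The main tool is the equivariant Atiyah--Hirzebruch spectral sequence
\[
E^2_{p,q}=H^\Gamma_p\bigl(\ulEG;\underline{KO}_q\bigr)\Longrightarrow KO^\Gamma_{p+q}(\ulEG),
\]
where $\underline{KO}_q$ is the Bredon coefficient system sending an orbit $\Gamma/H$ to $KO_q^H(\mathrm{pt})=KO_q(C^\ast_r(H;\RR))$. As geometric input I would use the cocompact model for $\ulEG$ furnished by the Soul\'e spine of the symmetric space $\SL_3(\RR)/\mathrm{SO}(3)$, together with the explicit finite cell stabilisers and Bredon chain complex underlying S\'anchez-Garc\'ia's computation of $K^\Gamma_\ast(\ulEG)$ in \cite{SanchezGarcia:SL}; that work also gives the complex answer $K^\Gamma_\ast(\ulEG)=K_\ast(\mathrm{pt})^8$, which I would carry along as both a guide and a source of constraints.

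First I would identify the coefficient systems. For each finite stabiliser $H$ the Wedderburn decomposition of $\RR[H]$ into matrix algebras over $\RR$, $\CC$ and $\HH$ expresses $KO_q^H(\mathrm{pt})$ as a direct sum of copies of $KO_q(\mathrm{pt})$, $KU_q(\mathrm{pt})$ and $KO_{q-4}(\mathrm{pt})$, one for each irreducible representation of real, complex and quaternionic type respectively. Assembling these groups together with the restriction maps into the systems $\underline{KO}_q$ and feeding them through the Bredon chain complex produces the full $E^2$-page.

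Having the $E^2$-page, I would analyse the differentials and extensions. The only possibly nonzero differential on $E^2$ is $d_2\colon E^2_{p,q}\to E^2_{p-2,q+1}$, and the sparsity of $KO_\ast(\mathrm{pt})$ eliminates many entries for degree reasons. To resolve the survivors and the $2$-extension problems among the copies of $\ZZ_2$ in total degrees $1$ and $2$, I would use the $KO_\ast(\mathrm{pt})$-module structure---in particular multiplication by $\eta\in KO_1(\mathrm{pt})$---and the comparison with the complex computation afforded by the complexification map $c$ and the Wood cofibre sequence $\Sigma\KO\xrightarrow{\eta}\KO\xrightarrow{c}\mathbf{KU}$. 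The latter yields a long exact sequence relating $KO^\Gamma_\ast(\ulEG)$ to the known groups $K^\Gamma_\ast(\ulEG)=K_\ast(\mathrm{pt})^8$, and the combination of $\eta$-linearity with this sequence should force freeness of rank $8$.

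The step I expect to be hardest is controlling the interplay of the three representation types inside Bredon homology. In contrast with the complex case, the systems $\underline{KO}_q$ do not split globally into real, complex and quaternionic summands, because restriction to a subgroup can change the type of an irreducible representation; as a result the complex- and quaternionic-type contributions, which are nonzero in the total degrees $3,5,6,7$ where the final answer vanishes, must cancel either inside the Bredon differentials or through $d_2$. Pinning down these cancellations, and with them the two $2$-extension problems, is the crux, and it is precisely here that the $\eta$-action and the known complex answer do the decisive work.
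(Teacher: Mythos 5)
Your setup (the Atiyah--Hirzebruch spectral sequence over the Soul\'e/S\'anchez-Garc\'ia model) is the same as the paper's, but the step you single out as the crux rests on a false premise about these particular stabilisers, and the machinery you propose to handle it would in any case not close the argument. The cell stabilisers are $\{1\}$, $\ZZ_2$, $\ZZ_2^2$, $D_3$, $D_4$, $D_6$ and $\sym(4)$, and every irreducible character of each of these groups has Schur indicator $1$: in your Wedderburn step there are \emph{no} complex- or quaternionic-type summands at all. Consequently $\nu\colon\repr(H)\to\repc(H)$ and $\sigma\colon\repc(H)\to\reph(H)$ are isomorphisms for every stabiliser $H$, the rows $q\equiv 5,6\pmod 8$ of the $E^2$-page vanish identically (and the rows $q\equiv 3,7$ vanish for \emph{every} finite group, whatever the representation types), the rows $q\equiv 0,4$ coincide with the Bredon homology with $\repc$-coefficients, and the rows $q\equiv 1,2$ are its mod-$2$ reduction, since $\rho$ and $\eta$ become multiplication by $2$; this is exactly the paper's Proposition~\ref{prop.KtoKO}. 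Your assertion that the complex- and quaternionic-type contributions ``are nonzero in the total degrees $3,5,6,7$'' and must cancel through $d_2$ or inside the Bredon differentials is therefore simply wrong for this group, and the worry about restriction changing representation type never arises. Moreover, S\'anchez-Garc\'ia's chain complex, which you cite, has homology $\ZZ^8$ in degree $0$ and zero in all other degrees; freeness kills the $\Tor$ terms, so the entire $E^2$-page is concentrated in the single column $p=0$ and the spectral sequence collapses with no differentials of any page and no extension problems in degrees $1$ and $2$ to resolve.

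Even granting your premise, the fallback is not an argument. The Wood sequence $\cdots\to KO^\Gamma_{n-1}(\ulEG)\xrightarrow{\eta}KO^\Gamma_{n}(\ulEG)\to K^\Gamma_n(\ulEG)\to KO^\Gamma_{n-2}(\ulEG)\to\cdots$ with $K^\Gamma_\ast(\ulEG)=\ZZ^8$ in even and $0$ in odd degrees does not force $KO^\Gamma_\ast(\ulEG)\cong KO_\ast(\ast)^{\oplus 8}$: for instance, the pattern consisting of four copies of the complex $K$-theory groups, with $\eta$ acting as zero, satisfies the same exactness constraints and has the same complexification. So the $\ZZ_2$-multiplicities in degrees $1,2$ and the vanishing in degrees $5,6$ cannot be recovered from the complex answer plus formal $\eta$-manipulation; they require genuinely real representation-theoretic input, which is precisely the Schur-indicator computation above --- the one idea your proposal is missing, and the one that makes the paper's proof a two-line collapse rather than a differential-and-extension analysis.
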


Applying a K\"unneth type theorem \cite[Theorem~3.6]{SanchezGarcia:Coxeter} to the isomorphism $\GL_3(\ZZ)\cong\SL_3(\ZZ)\times\ZZ_2$ on the level of Bredon homology, we obtain the following result for $\GL_3(\ZZ)$.

\begin{corollary}[Theorem~\ref{cor.KO.gl3.body}]\label{cor.KO.gl3}
Let $\Gamma=\GL_3(\ZZ)$, then for $n=0,\dots,7$ we have
\[KO^\Gamma_{n}(\underbar{\emph{E}}\Gamma)= \ZZ^{16},\quad \ZZ_2^{16},\quad \ZZ_2^{16},\quad 0,\quad \ZZ^{16}, \quad 0, \quad 0, \quad 0  \]
and the remaining groups are given by $8$-fold Bott-periodicity.
\end{corollary}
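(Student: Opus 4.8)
The plan is to deduce the result from Theorem~\ref{thm.KO.sl3} by combining the product structure of $\GL_3(\ZZ)$ with a K\"unneth formula for Bredon homology. First I would record the splitting $\GL_3(\ZZ)\cong\SL_3(\ZZ)\times\ZZ_2$: the central element $-I$ has determinant $(-1)^3=-1$, so the map $\SL_3(\ZZ)\times\langle -I\rangle\to\GL_3(\ZZ)$, $(A,\varepsilon)\mapsto\varepsilon A$, is a well-defined isomorphism with $\langle -I\rangle\cong\ZZ_2$. Since $\ZZ_2$ is finite, $\underbar{E}\ZZ_2$ is a point, and a model for the classifying space for proper actions of the product is $\underbar{E}\GL_3(\ZZ)\simeq\underbar{E}\SL_3(\ZZ)\times\underbar{E}\ZZ_2\simeq\underbar{E}\SL_3(\ZZ)$ with $\ZZ_2$ acting trivially.

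Next I would compute the Bredon homology of $\GL_3(\ZZ)$ from that of the two factors. For the finite group $\ZZ_2$ the Bredon homology of the one-point space $\underbar{E}\ZZ_2$ is concentrated in homological degree $0$, where it is the value $\underline{M}(\ZZ_2)$ of the relevant coefficient system; for the representation-ring coefficient systems that govern the $KO$-theoretic Atiyah--Hirzebruch spectral sequence one has $\repr(\ZZ_2)\cong\reph(\ZZ_2)\cong\ZZ^2$ (the classes of the trivial and sign representations, respectively their quaternionifications). As these are free abelian groups sitting in a single degree, the K\"unneth theorem \cite[Theorem~3.6]{SanchezGarcia:Coxeter} has vanishing $\Tor$-terms and yields
\[H^{\GL_3(\ZZ)}_n\bigl(\underbar{E}\GL_3(\ZZ);\underline{M}\bigr)\cong H^{\SL_3(\ZZ)}_n\bigl(\underbar{E}\SL_3(\ZZ);\underline{M}\bigr)\otimes_\ZZ\underline{M}(\ZZ_2),\]
so every Bredon homology group of $\GL_3(\ZZ)$ is the double of the corresponding group for $\SL_3(\ZZ)$.

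Finally I would feed this into the spectral sequence converging to $KO^{\GL_3(\ZZ)}_\ast(\underbar{E}\GL_3(\ZZ))$, exactly as in the proof of Theorem~\ref{thm.KO.sl3}. Because the doubling arises by tensoring the $E^2$-page with the free rank-two contribution of $\ZZ_2$, naturality (indeed multiplicativity) of the spectral sequence forces its differentials and extension problems to be those of the $\SL_3(\ZZ)$ computation tensored with $\ZZ^2$. Hence the groups $KO^{\GL_3(\ZZ)}_\ast$ are obtained from the groups $KO^{\SL_3(\ZZ)}_\ast$ of Theorem~\ref{thm.KO.sl3} by doubling each rank, giving $\ZZ^{16},\ \ZZ_2^{16},\ \ZZ_2^{16},\ 0,\ \ZZ^{16},\ 0,\ 0,\ 0$ in degrees $0,\dots,7$. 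The main obstacle is precisely this last step: one must check that the K\"unneth isomorphism is compatible with the box-product structure of the $KO$-coefficient systems and that passing it through the spectral sequence really does double the answer uniformly, creating no new differentials or nontrivial extensions. Since the $\ZZ_2$ factor contributes only free groups of rank two concentrated in a single homological degree, this compatibility is where naturality must be invoked with care, but no genuinely new phenomena can arise.
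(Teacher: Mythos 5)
Your proposal follows essentially the same route as the paper: split $\GL_3(\ZZ)\cong\SL_3(\ZZ)\times\ZZ_2$, note $\underbar{E}\GL_3(\ZZ)=\underbar{E}\SL_3(\ZZ)$ with trivial $\ZZ_2$-action, apply the K\"unneth formula of \cite[Theorem~3.6]{SanchezGarcia:Coxeter}, and collapse the Atiyah--Hirzebruch spectral sequence. The one step you should justify explicitly is the use of the K\"unneth theorem for coefficient systems other than $\repc$ (it is stated for complex representation rings, and $\repr(H\times K)\cong\repr(H)\otimes\repr(K)$ fails for general finite groups): the paper instead observes that each product stabiliser $H\times\ZZ_2$ still has equal real, complex and quaternionic character tables because Schur indicators are multiplicative, so Proposition~\ref{prop.KtoKO} applies verbatim and, since the complex Bredon homology is $\ZZ^{16}$ concentrated in degree $0$, the $E^2$-page lives in the single column $p=0$ --- which also renders your closing concerns about differentials and extension problems vacuous.
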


We also consider $\Gamma=\PSLp$ or $\SLp$, for $p$ a prime, computing the equivariant $K$-homology groups $K^\Gamma_n(\underbar{E}\Gamma)$.  There has been considerable interest in determining homological properties of the groups $\SLm$ and groups related to them \cite{AN1998,BE2014,Hughes2019cohomology}.  It appears, however, that even with computer based methods the problem of determining the cohomology of $\SLm$ for $m$ a product of $3$ primes is out of reach \cite{BE2014}. In Lemma~\ref{lem.BC} we give a short proof of the Baum-Connes conjecture for $\SLp$ and so we obtain the topological $K$-theory of the reduced group $C^*$-algebra of $\SLp$ as well.

\begin{thm}[Theorem~\ref{thm.equiK.PSL1overp.body}]\label{thm.equiK.PSL1overp}
Let $p$ be a prime and $\Gamma=\PSL_2(\ZZ[\frac{1}{p}])$, then $K_n^{\Gamma}(\underbar{{\emph E}}\Gamma)$ is a free abelian group with rank as given in Table~\ref{tab:PSL1pEquiK}.  Moreover, since the Baum-Connes Conjecture holds for $\Gamma$ we have $K^\Gamma_\ast( \underbar{{\emph E}}\Gamma)\cong K^\textrm{top}_\ast(C^\ast_r(\Gamma))$.
\begin{table}[h]
    \centering
\begin{tabular}{|c|c|c|c|c|c|c|}
    \hline
    & $p=2$ & $p=3$ & $ p\equiv 1\pmod{12}$& $ p\equiv 5\pmod{12}$& $ p\equiv 7\pmod{12}$ & $ p\equiv 11\pmod{12}$\\
    \hline \hline
    $n=0$ & $7$ & $6$ & $4+\frac{1}{6}(p-7)$ & $6+\frac{1}{6}(p+1)$ & $5+\frac{1}{6}(p-1)$ & $7+\frac{1}{6}(p+7)$ \rule{0pt}{11pt} \\
    \hline
    $n=1$ & $0$ & $0$ & $3$ & $1$ & $2$ & $0$\\
    \hline
\end{tabular}
    \caption{$\ZZ$-rank of the equivariant $K$-homology of the classifying space for proper actions of $\PSL_2(\ZZ[\frac{1}{p}])$.}
    \label{tab:PSL1pEquiK}
\end{table}
\end{thm}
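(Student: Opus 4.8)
The plan is to exploit the decomposition of $\Gamma=\PSLp$ as an amalgam arising from its action on the Bruhat--Tits tree $T_{p+1}$ of $\PSL_2(\QQ_p)$. By Serre's theory $\SLp\cong\SL_2(\ZZ)*_{\Gamma_0(p)}\SL_2(\ZZ)$, the two vertex groups being copies of $\SL_2(\ZZ)$ and the edge group the congruence subgroup $\Gamma_0(p)$; quotienting by the centre gives $\PSLp\cong\PSL_2(\ZZ)*_{\bar\Gamma_0(p)}\PSL_2(\ZZ)$, where $\bar\Gamma_0(p)$ is the image of $\Gamma_0(p)$. The vertex and edge groups are virtually free, so their classifying spaces for proper actions are trees, and I can glue these along the tree $T_{p+1}$ to build $\ulEG$. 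The resulting pushout yields a Mayer--Vietoris sequence in Bredon homology with coefficients in $\repc$, namely $\cdots\to H^{\bar\Gamma_0(p)}_n(\ulEG;\repc)\xrightarrow{\bar\psi}H^{\PSL_2(\ZZ)}_n\oplus H^{\PSL_2(\ZZ)}_n\to H^\Gamma_n(\ulEG;\repc)\to\cdots$, where $\bar\psi$ is the difference of the two induction maps.

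Next I would compute the pieces. For $\PSL_2(\ZZ)\cong\ZZ_2*\ZZ_3$ the relevant tree has vertex stabilisers $\ZZ_2,\ZZ_3$ and free edge orbit, so the Bredon chain complex is the induction map $R(\{1\})\to R(\ZZ_2)\oplus R(\ZZ_3)$, giving $H_0=\ZZ^4$ and $H_1=0$. The edge group $\bar\Gamma_0(p)$, being a finite-index subgroup of $\PSL_2(\ZZ)$, is virtually free and isomorphic to $F_r*\ZZ_2^{*\nu_2}*\ZZ_3^{*\nu_3}$, where $\nu_2,\nu_3$ count conjugacy classes of elements of order $2,3$ and $r$ is the rank of the free part; here $H_0=\ZZ^{1+\nu_2+2\nu_3}$ and $H_1=\ZZ^{r}$. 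Feeding these into the Mayer--Vietoris sequence and using $H_1^{\PSL_2(\ZZ)}=0$ collapses it to the identifications $H_2^\Gamma\cong\ZZ^{r}$, $H_1^\Gamma\cong\ker\bar\psi$, and $H_0^\Gamma\cong\mathrm{coker}\,\bar\psi$, with $H_n^\Gamma=0$ for $n\geq 3$. Since the Bredon homology is concentrated in degrees $\leq 2$, the equivariant Atiyah--Hirzebruch spectral sequence has $E^2_{s,t}=H^\Gamma_s(\ulEG;\repc)$ for $t$ even and $0$ otherwise, so all differentials vanish on dimension grounds and it collapses. As every group in sight is free abelian there are no extension problems, and I obtain $K_0^\Gamma(\ulEG)=\mathrm{coker}\,\bar\psi\oplus\ZZ^{r}$ and $K_1^\Gamma(\ulEG)=\ker\bar\psi$.

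It remains to compute the rank $\rho:=\mathrm{rank}(\mathrm{im}\,\bar\psi)$, whence $\mathrm{rank}\,K_1^\Gamma=(1+\nu_2+2\nu_3)-\rho$ and $\mathrm{rank}\,K_0^\Gamma=(8-\rho)+r$. The two embeddings $\bar\Gamma_0(p)\hookrightarrow\PSL_2(\ZZ)$ are interchanged by the Atkin--Lehner involution, i.e.\ conjugate to one another via $\Gamma_0(p)\leftrightarrow\Gamma^0(p)$ inside $\PSL_2(\ZZ)$, so after this identification $\bar\psi=(\iota,-\iota)$ for a single induction map $\iota$, and $\rho=\mathrm{rank}(\mathrm{im}\,\iota)$. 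Every finite cyclic subgroup of $\bar\Gamma_0(p)$ is conjugate in $\PSL_2(\ZZ)$ into the unique class of $\ZZ_2$ or of $\ZZ_3$, while the permutation module on the $(p+1)$ cosets contains the trivial representation exactly once. Computing ranks after tensoring with $\CC$ via L\"uck's equivariant Chern character---which expresses the rank in terms of conjugacy classes of finite cyclic subgroups and bypasses the off-diagonal induction terms---gives $\rho=1+\epsilon_2+2\epsilon_3$, where $\epsilon_i=1$ if $\nu_i>0$ and $\epsilon_i=0$ otherwise.

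The final step is to feed in the classical data for $\Gamma_0(p)$: the number of elliptic points $\nu_2=1+\left(\tfrac{-1}{p}\right)$ and $\nu_3=1+\left(\tfrac{-3}{p}\right)$ (taking the exceptional values $\nu_2=1$ at $p=2$ and $\nu_3=1$ at $p=3$), the number of cusps $\nu_\infty=2$, and the genus formula, which together yield $r=2g_0(p)+1=1+\tfrac{p+1}{6}-\tfrac{\nu_2}{2}-\tfrac{2\nu_3}{3}$. Sorting $p$ by its residue mod $12$ determines $\nu_2,\nu_3$ and hence $\rho$ and $r$, and substituting reproduces Table~\ref{tab:PSL1pEquiK}; the freeness of all groups is automatic from the computation. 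The final isomorphism $K^\Gamma_\ast(\ulEG)\cong K^{\mathrm{top}}_\ast(C^\ast_r(\Gamma))$ then follows from the Baum--Connes conjecture established in Lemma~\ref{lem.BC}. I expect the main obstacle to be the determination of $\rho$: controlling the induction map $\iota$ on Bredon $H_0$, in particular ruling out rank-reducing off-diagonal contributions and handling the exceptional primes $p=2,3$, together with the bookkeeping that translates the modular invariants $\nu_2,\nu_3,g_0(p)$ into the residue classes modulo $12$.
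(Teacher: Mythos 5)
Your proposal follows the same skeleton as the paper's proof: the amalgam $\PSL_2(\ZZ)\ast_{\Gamma_0(p)}\PSL_2(\ZZ)$, the Mayer--Vietoris sequence in Bredon homology, the splitting coming from $H_1^{\FIN}(\PSL_2(\ZZ);\repc)=0$ (which gives $H_2^{\FIN}(\Gamma;\repc)\cong H_1^{\FIN}(\Gamma_0(p);\repc)$, your $\ZZ^r$), and the collapsed Atiyah--Hirzebruch spectral sequence with no extension problems. You differ in two inputs, both legitimately: you read off the Bredon homology of $\Gamma_0(p)$ from classical modular data ($\nu_2$, $\nu_3$, two cusps, the genus of $X_0(p)$), where the paper derives the Fuchsian signatures by an Adem--Naffah-style cohomology computation; and, at the crux, you compute $\rho=\mathrm{rank}(\mathrm{im}\,\bar\psi)$ directly, whereas the paper computes $H_0^{\FIN}(\Gamma;\repc)$ as the colimit $\mathrm{colim}_{\Gamma/H}\repc(H)$ via a fusion count of conjugacy classes (its Lemma~5.2) and then recovers $H_1$ by exactness, never isolating $\rho$. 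Your numerology checks out against the paper's tables in all six cases, including $p=2,3$.

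The one step you must repair is the justification that $\bar\psi=(\iota,-\iota)$. The two embeddings of $\Gamma_0(p)$ are conjugate by $\mathrm{diag}(p,1)$ (or by the Fricke element $w_p$), which lies in $\PGL_2^{+}(\QQ)$ but \emph{not} in $\PSL_2(\ZZ)$, so ``conjugate to one another inside $\PSL_2(\ZZ)$'' is false as stated --- and the conclusion is not free. What your rank computation needs, in the rationalized class-function picture furnished by the Chern character, is that each class of finite-order elements of $\Gamma_0(p)$ pushes forward to the \emph{same} class of $\PSL_2(\ZZ)$ under both embeddings. For order $2$ this is automatic (one class), but $\PSL_2(\ZZ)\cong\ZZ_2\ast\ZZ_3$ has \emph{two} classes of order-$3$ elements, since $b\not\sim b^{-1}$ in a free product; if, say for $p\equiv 1,7\pmod{12}$ where $\nu_3=2$, one class were twisted by inversion under one embedding but not the other, the image would contain the vectors $([b],-[b])$, $([b^2],-[b^2])$, $([b],-[b^2])$, $([b^2],-[b])$, which span rank $3$ rather than $2$, giving $\rho=1+\epsilon_2+3$ and contradicting your table. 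The fix is short: the conjugating element has positive determinant, so it acts on the hyperbolic plane preserving orientation and hence the rotation sense of elliptic elements, and the two order-$3$ classes of $\PSL_2(\ZZ)$ are distinguished precisely by rotation sense; therefore the two pushforwards coincide and $\rho=1+\epsilon_2+2\epsilon_3$ as you assert. (Alternatively, substitute the paper's fusion argument: conjugating an element of one vertex copy through $\Gamma_0(p)$ into the other shows exactly which classes fuse, computes $H_0^{\FIN}(\Gamma;\repc)$ outright, and exactness then forces $\rho$.) Finally, since your argument for $\rho$ is rational, add a line confirming freeness integrally --- here visible because $\mathrm{coker}\,\bar\psi$ is presented by regular-representation relations with unimodular rows, matching the paper's torsion-free Bredon groups.
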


\begin{thm}[Theorem~\ref{thm.equiK.Sl1overp.body}]\label{thm.equiK.SL1overp}
Let $p$ be a prime and $\Gamma=\SLp$.  Then $K_n^\Gamma(\underbar{\emph{E}}\Gamma)$ is additively isomorphic to the direct sum of two copies of the corresponding equivariant $K$-homology group of $\PSLp$.
\end{thm}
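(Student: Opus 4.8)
The plan is to exploit the central extension
\[1\to Z\to\Gamma\xrightarrow{q}\overline\Gamma\to1,\qquad \Gamma=\SLp,\quad \overline\Gamma=\PSLp,\quad Z=\{\pm I\}\cong\ZZ_2,\]
together with the equivariant Atiyah--Hirzebruch spectral sequence, which computes $K^\Gamma_\ast(\ulEG)$ from the Bredon homology $H^\Gamma_\ast(\ulEG;\repc)$. Since $Z$ is central and acts trivially on every $\overline\Gamma$-space, any model $X$ for $\underbar{E}\,\overline\Gamma$ --- for instance $\HH^2\times T_p$ with $T_p$ the Bruhat--Tits tree of $\SL_2(\QQ_p)$ --- is simultaneously a model for $\ulEG$ via $q$, and $X/\Gamma=X/\overline\Gamma$. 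For each cell $\sigma$ the $\Gamma$-stabiliser is the full preimage $q^{-1}(\overline\Gamma_\sigma)$ of the $\overline\Gamma$-stabiliser and hence contains $Z$. Since $\ZZ[\frac{1}{p}]\subset\RR$, every finite subgroup of $\Gamma$ embeds in $\SL_2(\RR)$ and is therefore cyclic; in particular each cell stabiliser $F=q^{-1}(\overline\Gamma_\sigma)$ is a cyclic group containing $-I$, with $F/Z=\overline\Gamma_\sigma$.

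I would then split the coefficients by the central character. For any finite group $F\ni-I$ one has $R_\CC(F)=R_\CC(F)^+\oplus R_\CC(F)^-$, where $\pm$ records whether $-I$ acts as $\pm1$, and $R_\CC(F)^+$ is canonically $R_\CC(F/Z)$. Induction, restriction and conjugation between subgroups containing the central $Z$ all preserve the value of the central character, so this splitting is natural over the orbit category and the Bredon chain complex computing $H^\Gamma_\ast(X;\repc)$ decomposes as $C_\ast^+\oplus C_\ast^-$ with $C_\ast^+$ equal to the Bredon complex of $\overline\Gamma$. Consequently
\[H^\Gamma_\ast(X;\repc)\cong H^{\overline\Gamma}_\ast(X;\repc)\oplus H_\ast(C_\ast^-).\]

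The crux, and the step I expect to be the main obstacle, is to identify $H_\ast(C_\ast^-)$ with $H^{\overline\Gamma}_\ast(X;\repc)$. For a cyclic $F=\langle g\rangle$ of order $2n$ with $-I=g^n$, writing $R_\CC(F)=\ZZ[x]/(x^{2n}-1)$ for $x$ the character $g\mapsto\zeta_{2n}$, the even powers of $x$ span $R_\CC(F)^+=R_\CC(F/Z)$ while the odd powers form a free rank-one module over $R_\CC(F/Z)$; in particular $C^-_\ast$ and $C^+_\ast$ are degreewise isomorphic. By the projection formula $\mathrm{Ind}(a\cdot\mathrm{Res}\,b)=\mathrm{Ind}(a)\cdot b$ the component maps of the differential of $C^-_\ast$ are module maps over the relevant $R_\CC(F/Z)$, so a compatible choice of odd-character generators along the stabiliser inclusions of $X/\overline\Gamma$ produces a chain isomorphism $C^-_\ast\cong C^+_\ast$. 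The real content is the simultaneous compatibility of these choices across the conjugacy and inclusion relations in the quotient; because the diagram of cell stabilisers is explicit and consists entirely of cyclic groups, I expect this to reduce to a finite verification rather than a genuine cohomological obstruction.

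Granting $H^\Gamma_\ast(X;\repc)\cong H^{\overline\Gamma}_\ast(X;\repc)^{\oplus2}$, I would finish with the Atiyah--Hirzebruch spectral sequence. By Theorem~\ref{thm.equiK.PSL1overp} the groups $K^{\overline\Gamma}_\ast(\underbar{E}\,\overline\Gamma)$ are free abelian, so the $\overline\Gamma$-spectral sequence degenerates with no extension problems; since the splitting of $\repc$ induces a corresponding splitting of spectral sequences, the $\Gamma$-spectral sequence is two copies of the $\overline\Gamma$ one and hence also degenerates. We conclude that $K^\Gamma_n(\ulEG)$ is the direct sum of two copies of $K^{\overline\Gamma}_n(\underbar{E}\,\overline\Gamma)$, which is the assertion of the theorem.
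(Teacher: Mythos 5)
Your route is genuinely different from the paper's. The paper never decomposes coefficients: it computes the Bredon homology of the lifts of $\PSL_2(\ZZ)$ and $\Gamma_0(p)$ to $\SL_2(\RR)$ directly, viewing each as a finite graph of finite groups in which every edge group is the central $\ZZ_2$ and the vertex groups are $\ZZ_2$, $\ZZ_4$, $\ZZ_6$, observes that each answer is two copies of the projective one, and then runs the Mayer--Vietoris sequence for $\SL_2(\ZZ)\ast_{\widetilde{\Gamma_0}(p)}\SL_2(\ZZ)$, pinning down $H_0$ via the colimit description together with a fusion count showing that $\SLp$ has exactly twice as many conjugacy classes of finite-order elements as $\PSLp$. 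Your central-character splitting $C_\ast=C_\ast^+\oplus C_\ast^-$, with $C_\ast^+$ identified with the projective Bredon complex, is cleaner and more conceptual, and everything up to that splitting is correct: the pulled-back model, the cyclicity of stabilisers and the fact that they all contain $-I$, the naturality of the $\pm$-decomposition under induction and conjugation, and the collapse of the spectral sequence at the end (for finitely generated chain groups you would want the cocompact double-mapping-cylinder model built from the Bass--Serre trees rather than $\HH^2\times T_p$, but Bredon homology is model-independent, so this is cosmetic). What the paper's piecewise method buys is that it never has to confront your crux at all.

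That crux, the chain isomorphism $C^-_\ast\cong C^+_\ast$, is however a genuine gap and not the ``finite verification'' you predict: the claim is false at the level of generality at which you argue for it. The differentials are compositions of conjugations and inductions, and a conjugation of a $\ZZ_4$ (or $\ZZ_6$) stabiliser that inverts the generator acts trivially on $R_\CC^+$ but twists $R_\CC^-$ by the even unit character; if the incidence structure of cells contains a loop around which the net conjugation is inverting, the twist is not removable by rechoosing odd generators and genuinely changes homology. Concretely, take $G=\langle x,s\mid x^4=1,\ sxs^{-1}=x^{-1}\rangle$, a central extension of $\ZZ_2\times\ZZ$ by $Z=\langle x^2\rangle$ with all stabilisers cyclic containing $Z$: here $d^+=0$ on the loop of groups, while $d^-$ is multiplication by $1-t$ on $R^+=\ZZ[t]/(t^2-1)$, so $H_0(C^-)$ and $H_1(C^-)$ have rank $1$ against rank $2$ for $C^+$, and the doubling statement fails ($K_0$ has rank $3$, not $4$). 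So the projection formula plus cyclicity does not produce the isomorphism; you must additionally rule out inverting fusion of the $\ZZ_4$ and $\ZZ_6$ stabilisers inside $\SLp$. At the level of $H_0$ this is exactly equivalent to the doubling of conjugacy classes of finite-order elements that the paper establishes by its analogue of Lemma~\ref{lem.ccl.psl}, and for $H_1$ one needs the corresponding statement along the cylinders of the amalgam. With that fusion analysis supplied --- and it does hold for these groups, which is the content of the paper's count --- your argument closes; without it, the central step of your proof is unsupported.
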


Finally, we will give a proof of the Unstable Gromov-Lawson-Rosenberg Conjecture for positive scalar curvature for a large class of groups whose torsion subgroups have periodic cohomology.  The statement and background concerning this conjecture is given in Section~\ref{sec.GLR}.  However, we will introduce the following notation now before the theorem statement.  We say a group $\Gamma$ satisfies:

\begin{enumerate}
    \item[(M)] If every finite subgroup is contained in a unique maximal finite subgroup.
    \item[(NM)] If $M$ is a maximal finite subgroup of $\Gamma$, then the normaliser $N_\Gamma(M)$ of $M$ is equal to $M$.
    \item[(BC)] If $\Gamma$ satisfies the Baum-Connes conjecture.
    \item[(PFS)] If all maximal finite subgroups of $\Gamma$ are odd order and have periodic cohomology.
\end{enumerate}

A large number of arithmetic groups satisfy the following theorem including many finite index subgroups $\PSLp$ for $p\equiv11\pmod{12}$ and Hilbert modular groups.  We will detail a number of additional examples in Section~\ref{sec.GLR}.

\begin{thm}[Theorem~\ref{thm.GLR.body}]\label{thm.GLR}
Let $\Gamma$ be a group satisfying (BC), (M), (NM) and (PFS).  If $\underbar{\emph{B}}\Gamma$ is finite and has dimension at most $9$, then the Unstable Gromov-Lawson-Rosenberg Conjecture holds for $\Gamma$.
\end{thm}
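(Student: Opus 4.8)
The plan is to place the primary obstruction in connective real $K$-homology and to reduce the unstable Gromov--Lawson--Rosenberg (GLR) conjecture for $\Gamma$ to two inputs: the conjecture for the maximal finite subgroups, which is classical for groups with periodic cohomology, and the injectivity furnished by (BC). For a closed spin $n$-manifold $M$ with $\pi_1(M)\cong\Gamma$ and classifying map $c\colon M\to B\Gamma$, the index obstruction is the image of the $ko$-fundamental class $c_\ast[M]$ under the composite
\[
D_n\colon ko_n(B\Gamma)\xrightarrow{\ \beta\ }KO_n(B\Gamma)=KO^\Gamma_n(E\Gamma)\xrightarrow{\ \gamma\ }KO^\Gamma_n(\ulEG)\xrightarrow[\ \cong\ ]{\ \delta\ }KO_n(C^\ast_r(\Gamma)),
\]
where $\delta$ is the assembly isomorphism provided by (BC). For $n\geq 5$ a theorem of Stolz, together with the Gromov--Lawson--Schoen--Yau surgery construction, guarantees that $M$ carries a metric of positive scalar curvature (psc) as soon as $c_\ast[M]$ lies in the subgroup of $ko_n(B\Gamma)$ generated by classes of psc manifolds; the cases $n\leq 4$ are covered by the classical low-dimensional results. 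Thus it suffices to prove that $\ker D_n$ is generated by psc classes for every $n$.

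Since $\delta$ is an isomorphism we have $\ker D_n=\beta^{-1}(\ker\gamma)$, and I would analyse this preimage in two pieces according to whether $\beta(x)$ vanishes. The non-vanishing piece is governed by $\ker\gamma$, the kernel of the map induced by $E\Gamma\to\ulEG$. Here I would invoke conditions (M) and (NM), which yield the standard homotopy $\Gamma$-pushout presenting $\ulEG$ as $E\Gamma$ with the orbits $\Gamma/M$ attached along $\coprod_{[M]}\Gamma\times_M EM$, the coproduct running over conjugacy classes of maximal finite subgroups. The associated long exact sequence identifies $\ker\gamma$ with the image of a boundary map out of the relative term $\bigoplus_{[M]}\widetilde{KO}_\ast(BM)$, so that every class of $\ker\gamma$ is carried by the inclusions $BM\hookrightarrow B\Gamma$.

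These finite-subgroup classes are disposed of by (PFS). A finite group $M$ of odd order with periodic cohomology is a spherical space-form group, acting freely on a sphere, so the corresponding space forms are psc manifolds whose fundamental classes generate the reduced connective $K$-homology in the relevant degrees; consequently the contribution of each $M$ to $\ker\gamma$ is psc-representable, and the unstable GLR conjecture for $M$ itself is known (by work of Botvinnik and Gilkey). The odd-order hypothesis enters twice: it makes $\widetilde{ko}_\ast(BM)$ and $\widetilde{KO}_\ast(BM)$ consist only of odd torsion, which prevents any $2$-primary interference between the finite-subgroup part and the free part, and it is exactly the regime in which the periodic-cohomology computations are cleanest. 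Lifting such a $KO$-class through $\beta$ to a psc $ko$-class is harmless because the space-form generators already carry canonical connective classes.

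The remaining, and genuinely hard, piece is $\ker\beta=\ker\big(ko_n(B\Gamma)\to KO_n(B\Gamma)\big)$, which measures precisely the gap between the stable conjecture---accessible from (BC) through Stolz's theorem---and the unstable conjecture to be proved. This is where the hypothesis $\dim\underbar{B}\Gamma\leq 9$ is indispensable. Comparing the Atiyah--Hirzebruch spectral sequences computing $ko_\ast$ and $KO_\ast$, the only differentials of the periodic theory that can fail to lift to the connective one emanate from the negative-degree Bott classes $\pi_q KO$ with $q<0$; a count of filtration degrees shows that, once the computation is localised on the finite complex $\underbar{B}\Gamma$ via the pushout above, such differentials can affect only a bounded range of total degrees, and the bound $9$ confines this to the finitely many degrees that can be checked directly from the coefficient structure of $ko$ and the odd-order condition. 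I expect this spectral-sequence comparison to be the main obstacle; the surrounding steps---the Mayer--Vietoris bookkeeping and the verification that psc representatives on the finite-subgroup pieces and on the free piece glue compatibly with Stolz's positive-scalar-curvature exact sequence---should be routine once $\ker\beta$ and $\ker\gamma$ have been controlled separately.
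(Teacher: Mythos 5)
Your skeleton matches the paper's: Stolz's theorem \cite{Stolz1995} reduces everything to showing that $\Ker\left(\mu_\RR\circ p\colon ko_n(B\Gamma)\to KO_n^{\rm{top}}(C_r^\ast(\Gamma))\right)$ is psc-representable, (M) and (NM) isolate the maximal finite subgroups from $\underbar{B}\Gamma$, and (PFS) is there so that Botvinnik--Gilkey--Stolz \cite{BGS1997} can be invoked. But there is a genuine gap at exactly the step you defer as the ``main obstacle'': you never dispose of $\Ker\beta=\Ker\left(ko_n(B\Gamma)\to KO_n(B\Gamma)\right)$, and the route you sketch would fail. The dimension hypothesis is on $\underbar{B}\Gamma$, not on $B\Gamma$: whenever $\Gamma$ has torsion (the case of interest under (PFS)), $B\Gamma$ is infinite-dimensional, the skeleta of the $BM$'s contribute in unbounded Atiyah--Hirzebruch filtration, and no count of differentials confined to ``total degree at most $9$'' is available on $B\Gamma$ itself; the finiteness of $\underbar{B}\Gamma$ does not ``localise'' the $ko$-versus-$KO$ comparison on $B\Gamma$ in the way you claim. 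The paper never analyses $\Ker\beta$ at all. Instead, (BC), (M), (NM) and the Davis--L\"uck $p$-chain results \cite{pchain} produce the ladder of exact sequences \eqref{eqn.comdia} comparing $\widetilde{ko}_n$ of $\underbar{B}\Gamma$, $\bigoplus_{(H)}BH$ and $B\Gamma$ with the $\widetilde{KO}^{\rm{top}}$-sequence of group $C^\ast$-algebras; the bound $\dim\underbar{B}\Gamma\leq 9$ enters only through Joachim--Schick \cite[Lemma~2.6]{JS1998}, which makes $p\colon\widetilde{ko}_n(\underbar{B}\Gamma)\to\widetilde{KO}_n(\underbar{B}\Gamma)$ an isomorphism for $n\geq 6$ and injective for $n=5$; a diagram chase then pulls any class of $\Ker(\mu_\RR\circ p)$ back to $\gamma\in\bigoplus_{(H)}\widetilde{ko}_n(BH)$ lying in the kernel of the \emph{full composite} $ko_n(BH)\to KO_n^{\rm{top}}(C_r^\ast(H))$. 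That is the form in which \cite{BGS1997} must be quoted: for $H$ of odd order with periodic cohomology this kernel equals $ko_n^+(BH)$. Your version --- space forms ``generate the reduced connective $K$-homology,'' after which ``lifting through $\beta$ is harmless'' --- is both stronger than what they prove and insufficient: a $ko$-lift of a $KO$-class is ambiguous precisely up to $\Ker\beta$, so without the $ko$-level kernel statement your two pieces never recombine, and your argument is circular at its central point.

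Two smaller inaccuracies. First, the role of the odd-order hypothesis is not $2$-primary bookkeeping in the spectral sequence: it is the regime in which the Botvinnik--Gilkey--Stolz kernel theorem is actually valid --- see Remark~\ref{remark.GLR} on the misprint in \cite[Corollary~2.2]{BGS1997} pointed out by Davis, which is what forces (PFS) and also restricted the main theorem of \cite{DP03}. Second, your claim that ``the cases $n\leq 4$ are covered by the classical low-dimensional results'' is unjustified: the conjecture and Stolz's theorem live in dimensions $n\geq 5$ (the paper explicitly restricts to this range), and dimension $4$ is genuinely different because of Seiberg--Witten obstructions not visible to the index.
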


In Section~\ref{Sec.bckgrd} we give the relevant background on equivariant $K$ and $KO$-homology.  In Section~\ref{sec.SL3} we give the computations of the equivariant $KO$-homology for $\SL_3(\ZZ)$ and $\GL_3(\ZZ)$.  In Section~\ref{sec.KthryFuchsian} we provide auxiliary computations of the equivariant $K$-homology of Fuchsian groups.  In Section~\ref{sec.SLpKthry} we compute the equivariant $K$-homology of $\PSLp$ and $\SLp$.  Finally, in Section~\ref{sec.GLR} we prove the results about the Unstable Gromov-Lawson-Rosenberg Conjecture and give a number of examples of groups satisfying the conjecture.

\subsection*{Acknowledgements}
This paper contains material from the author's PhD thesis.  The author would like to thank his PhD supervisor Professor Ian Leary for his guidance and support.  He would also like to thank Jim Davis for pointing out Remark~\ref{remark.GLR}, Naomi Andrew, Guy Boyde, and Kevin Li for helpful conversations and the anonymous reviewer whose feedback greatly improved the exposition of this paper. This work was supported by the Engineering and Physical Sciences Research Council grant number 2127970.

\section{Preliminaries}\label{Sec.bckgrd}
In this section we introduce the relevant background from Bredon homology and its interactions with equivariant $K$- and $KO$-homology.  We follow the treatment given in Mislin's notes \cite{MislinNotes}.

\subsection{Classifying spaces for families}
Let $\Gamma$ be a discrete group.  A $\Gamma$-CW complex $X$ is a CW-complex equipped with a cellular $\Gamma$-action. We say the $\Gamma$ action is proper if all of the cell stabilisers are finite.

Let $\calf$ be a family of subgroups of $\Gamma$ which is closed under conjugation and finite intersections.  A {\it model} for the classifying space $E_\calf \Gamma$ for the family $\calf$ is a $\Gamma$-CW complex such that all cell stabilisers are in $\calf$ and the fixed point set of every $H\in\calf$ is weakly-contractible.  This is equivalent to the following universal property:  For every $\Gamma$-CW complex $Y$ there is exactly one $\Gamma$-map $Y\rightarrow E_\calf\Gamma$ up to $\Gamma$-homotopy.

In the case where $\calf=\mathcal{FIN}$, the family of all finite subgroups of $\Gamma$, we denote $E_{\mathcal{FIN}}(\Gamma)$ by $\underbar{E}\Gamma$.  We call such a space, the classifying space for proper actions of $\Gamma$.  Note that if $\Gamma$ is torsion-free then $\underbar{E}\Gamma=E\Gamma$.

\subsection{Bredon homology}
Let $\Gamma$ be a discrete group and $\calf$ be a family of subgroups.  We define the {\it orbit category} $\orbf$ to be the category with objects given by left cosets $\Gamma/H$ for $H\in\calf$ and morphisms the $\Gamma$-maps $\phi:\Gamma/H\rightarrow\Gamma/K$.  A morphism in the orbit category is uniquely determined by its image $\varphi(H)=\gamma K$ and $\gamma H\gamma^{-1}\subseteq K$; conversely, each such $\gamma\in\Gamma$ defines a $G$-map.

A {\it (left) Bredon module} is a covariant functor $M:\orbf\rightarrow\mathbf{Ab}$, where $\mathbf{Ab}$ is the category of Abelian groups.  Consider a $\Gamma$-CW complex $X$ and a family of subgroups $\calf$ containing all cell stabilisers.  Let $M$ be a Bredon module and define the {\it Bredon chain complex with coefficients} in $M$ as follows:

Let $\{c_\alpha\}$ be a set of orbit representatives of the $n$-cells in $X$ and let $\Gamma_\alpha$ denote the stabiliser of the cell $\alpha$.  The $n$th chain group is then
\[C_n:=\bigoplus_\alpha M(\Gamma/\Gamma_{c_\alpha}). \]
If $\gamma c'$ is an $(n-1)$-cell in the boundary of $c$, then $\gamma^{-1}\Gamma_c\gamma\subseteq\Gamma_{c'}$.  This defines a $\Gamma$-map $\varphi:\Gamma/\Gamma_c\rightarrow\Gamma/\Gamma_{c'}$, which in turn gives an induced homomorphism $M(\varphi):M(\Gamma/\Gamma_c)\rightarrow M(\Gamma/\Gamma_{c'})$.  Therefore, we obtain a differential $\partial:C_n\rightarrow C_{n-1}$.  Taking homology of the chain complex $(C_\ast,\partial)$ gives the {\it Bredon homology groups} $H^\calf_n(X;M)$.  A right Bredon module and Bredon cohomology are defined analogously using contravariant functors.

\subsection{Equivariant $K$-homology}
The original definition of equivariant $K$-homology used Kasparov's $KK$-theory \cite{BaumConnesHigson1994}.  There is also homotopy theoretic approach using spaces and spectra over the orbit category due to Davis-L\"uck \cite{DL1998}.  We will highlight the details we need.

Let $\Gamma$ be a discrete group.  In the context of the Baum-Connes conjecture we are specifically interested in the case where $X=\underbar{E}\Gamma$, $\calf=\mathcal{FIN}$ and $M=\repc$ the complex representation ring.  We consider $\repc(-)$ as a Bredon module in the following way:  For $\Gamma/H\in\orbf$ set $\repc(\Gamma/H):=\repc(H)$, the ring of complex representations of the finite group $H$.  Morphisms are then given by induction of representations.

We note that $\repc(\Gamma):=H_0^{\mathcal{F}}(\Gamma)=\textrm{colim}_{\Gamma/H\in\mathbf{Or}(\Gamma)}R_\CC(H)$.  In the case that $\Gamma$ has finitely many conjugacy classes of finite subgroups, $\repc(\Gamma)$ is a finitely generated quotient of $\bigoplus \repc(H)$, where $H$ runs over conjugacy classes of finite subgroups.

We now exhibit the connection between Bredon homology and $K^\Gamma_\ast(\underbar{E}\Gamma)$, the equivariant $K$-homology of the classifying space for proper actions.  Indeed, for each subgroup $H\leq\Gamma$ equivariant $K$-homology satisfies 
\[K_n^\Gamma(\Gamma/H)=K_n^{\rm{top}}(C_r^\ast(H)).\]
In the case $H$ is a finite subgroup we have $C_r^\ast(H)=\CC H$, $K_0^\Gamma(\Gamma/H)=K_0^{\rm{top}}(\CC H)=\repc(H)$, and $K_1^\Gamma(\Gamma/H)=K_1^{\rm{top}}(\CC H)=0$.  The remaining $K^\Gamma$ groups are given by $2$-fold Bott periodicity.  This allows us to view $K^\Gamma_n(-)$ as a Bredon module over $\mathbf{Orb}_{\mathcal{FIN}}(\Gamma)$.

We may use an equivariant Atiyah-Hirzebruch spectral sequence to compute the $K^\Gamma$-homology of a proper $\Gamma$-CW-complex $X$ from its Bredon homology.

\begin{thm}\emph{\cite[Page~50]{MislinNotes}}
Let $\Gamma$ be a group and $X$ a proper $\Gamma$-CW complex, then there is an Atiyah-Hirzebruch type spectral sequence
\[E^2_{p,q}:=H^{\mathcal{FIN}}_p(X;K^\Gamma_q(-))\Rightarrow K^\Gamma_{p+q}(X). \]
\end{thm}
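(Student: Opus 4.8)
The plan is to construct the spectral sequence from the $\Gamma$-skeletal filtration of $X$, following the template of the classical Atiyah--Hirzebruch spectral sequence but carried out throughout with the $\Gamma$-equivariant homology theory $K^\Gamma_\ast$ in place of an ordinary one. The only structural input I would assume is that $K^\Gamma_\ast$ is a $\Gamma$-equivariant homology theory on proper $\Gamma$-CW pairs: it is $\Gamma$-homotopy invariant, satisfies excision and the long exact sequence of a pair, and sends $\Gamma$-coproducts to direct sums. In the Davis--L\"uck model $K^\Gamma_n(X)=\pi_n\big(X_+\wedge_{\orbf}\mathbf{K}\big)$ for a suitable orbit-category spectrum $\mathbf{K}$, and these axioms are formal consequences of stable homotopy theory, so I would simply cite them. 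First I would filter $X$ by its $\Gamma$-skeleta $X_0\subseteq X_1\subseteq\cdots$, where $X_p$ is obtained from $X_{p-1}$ by attaching orbits of equivariant $p$-cells $\Gamma/\Gamma_\alpha\times D^p$. The long exact sequences of the pairs $(X_p,X_{p-1})$ then assemble into an exact couple with
\[
D^1_{p,q}=K^\Gamma_{p+q}(X_p),\qquad E^1_{p,q}=K^\Gamma_{p+q}(X_p,X_{p-1}),
\]
whose associated spectral sequence is the desired object. This construction is purely formal given the axioms.

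The heart of the argument is the identification of the $E^1$- and $E^2$-pages. Excision together with $\Gamma$-homotopy invariance identifies $K^\Gamma_{p+q}(X_p,X_{p-1})$ with the reduced equivariant homology of the quotient $X_p/X_{p-1}$, which as a pointed $\Gamma$-space is the wedge $\bigvee_\alpha (\Gamma/\Gamma_\alpha)_+\wedge S^p$ indexed by orbit representatives $\alpha$ of the $p$-cells, with $\Gamma_\alpha$ the stabiliser. The wedge axiom and the $p$-fold equivariant suspension isomorphism then give
\[
E^1_{p,q}\cong\bigoplus_\alpha K^\Gamma_q(\Gamma/\Gamma_\alpha),
\]
which is precisely the Bredon chain group $C_p\big(X;K^\Gamma_q(-)\big)$ from Section~\ref{Sec.bckgrd}. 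It then remains to check that the differential $d^1$ agrees with the Bredon boundary map. I would do this by tracing $d^1$ through the connecting homomorphism of the triple $(X_p,X_{p-1},X_{p-2})$ and comparing the resulting maps with the induced homomorphisms $K^\Gamma_q(\Gamma/\Gamma_\alpha)\to K^\Gamma_q(\Gamma/\Gamma_\beta)$ coming from the $\Gamma$-attaching maps of cells; in degree $q=0$ these are exactly the induction maps $\repc(\Gamma_\alpha)\to\repc(\Gamma_\beta)$ defining the Bredon differential. Granting this, $E^2_{p,q}=H^{\FIN}_p\big(X;K^\Gamma_q(-)\big)$.

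Finally I would address convergence. When $X$ is finite-dimensional the filtration is finite, so the spectral sequence has only finitely many nonzero columns and converges strongly, with the $E^\infty$-page giving the associated graded of a finite filtration of $K^\Gamma_{p+q}(X)$. For infinite-dimensional $X$ I would invoke continuity of the homology theory along the skeletal filtration, namely $K^\Gamma_\ast(X)=\operatorname{colim}_p K^\Gamma_\ast(X_p)$, together with the vanishing of the relevant $\lim^1$-terms, to obtain conditional convergence.

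I expect the main obstacle to be the verification that $d^1$ coincides with the Bredon differential. The exact-couple formalism and the convergence statement are routine, but matching the connecting homomorphism of the skeletal filtration with the algebraically defined boundary on $\bigoplus_\alpha K^\Gamma_q(\Gamma/\Gamma_\alpha)$ requires careful equivariant cellular approximation and degree bookkeeping, keeping precise track of the induction homomorphisms on the coefficient system $K^\Gamma_q(-)$.
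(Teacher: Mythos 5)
The paper offers no proof of this statement---it is quoted verbatim from Mislin's notes \cite[Page~50]{MislinNotes}---and the proof given there is exactly the one you outline: filter the proper $\Gamma$-CW complex by skeleta, form the exact couple from the long exact sequences of the pairs $(X_p,X_{p-1})$, identify $E^1_{p,q}\cong\bigoplus_\alpha K^\Gamma_q(\Gamma/\Gamma_\alpha)$ with the Bredon chain complex with coefficients in $K^\Gamma_q(-)$ via excision, the wedge axiom and equivariant suspension, and check $d^1$ against the Bredon differential. Your proposal is correct and essentially identical to the cited argument; the only remark worth adding is that since $K^\Gamma_\ast$ satisfies the colimit axiom and the skeletal filtration is exhaustive and bounded below, strong convergence holds for the homology spectral sequence without any ${\lim}^1$ hypothesis, so your caution on that point is unnecessary.
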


\subsection{Equivariant $KO$-homology}
In this section we summarise the material from \cite[Section~9]{DL2013} which we will require for our calculations.  Again fixing a discrete group $\Gamma$ and $\calf=\mathcal{FIN}$, we introduce two more Bredon modules, the real representation ring $\repr(-)$, and the quaternionic representation ring $\reph(-)$.  These are defined on $\orbf$ in exactly the same way as the complex representation ring.  We have natural transformations between the functors.  Indeed for a finite subgroup $H\leq\Gamma$ we have a diagram:

\[\begin{tikzcd}
\repr(H) \arrow[r, "\nu", bend left] & \repc(H) \arrow[r, "\sigma", bend left] \arrow[l, "\rho", bend left] & \reph(H). \arrow[l, "\eta", bend left]
\end{tikzcd}\]
Note that the diagram does not commute.  For instance let $\mathbf{1}\in\repr(H)$ denote the trivial representation, then $\rho\nu(\mathbf{1})=2\cdot(\mathbf{1})$.

For a real representation $\psi$, the {\it complexification} is $\nu(\psi)=\psi\otimes\CC$.  For a complex representation $\phi$, the {\it symplectification} is $\sigma(\phi)=\phi\otimes\HH$.  Going the other way, for an $n$-dimensional quaternionic representation $\omega$, the {\it complexification} is $\eta(\omega)=\eta$ considered as $2n$-dimensional complex representation.  Similarly, for an $n$-dimensional complex representation $\phi$, the {\it realification} is $\rho(\phi)=\phi$ considered as a $2n$-dimensional real representation.  Note that any composition of the x-ification natural transformations with the same source and target is necessarily not the identity.

The situation for the equivariant $KO$-homology, denoted $KO^\Gamma_\ast(-)$, is similar to the equivariant $K$-homology but more complicated.  For a subgroup $H\leq\Gamma$ we set $KO_n^\Gamma(\Gamma/H)=KO_n^{\rm{top}}(C_r^\ast(H))$.  By \cite[Section~1.2]{BrunerGreenlees2010}, in the case that $H$ is a finite subgroup we have that
\[KO_{n}^\Gamma(\Gamma/H)=KO_{n}^{\rm{top}}(C_r^\ast(H))=\begin{cases}
\repr(H) & n=0,\\
\repr(H)/\rho(\repc(H) & n=1,\\
\repc(H)/\eta(\reph(H)) & n=2,\\
0 & n=3,\\
\reph(H) & n=4,\\
\reph(H)/\sigma(\repc(H)) & n=5,\\
\repc(H)/\nu(\repr(H)) & n=6,\\
0 & n=7,
\end{cases} \]
with the remaining groups given by $8$-fold Bott-periodicity.  For $X$ a proper $\Gamma$-space, the Atiyah-Hirzebruch spectral sequence from before now takes the form
\[E^2_{p,q}:=H^{\mathcal{FIN}}_p(X;KO^\Gamma_q(-))\Rightarrow KO^\Gamma_{p+q}(X). \]

\subsection{Spectra and homotopy}
The section gives an alternative $\Gamma$-equivariant homotopy theoretic viewpoint.  Now, we consider $\Gamma$-equivariant homology theories as functors $\bfE\colon\orbf\rightarrow\spectra$.  Technically, to avoid functorial problems one must take composite functors through the categories $C^\ast$-$\mathbf{Cat}$ and $\mathbf{Groupoids}$.  We do not concern ourselves with this complication and refer the reader to \cite{DL1998} and \cite{DP03}.

Instead we will take for granted that there is a composite functor
\[\KO\colon\orbf\rightarrow\spectra \]
which satisfies $\pi_n\KO(\Gamma/H)=KO_n^{\rm{top}}(C_r^\ast(H))$.  When $\calf=\mathcal{FIN}$ this perspective gives a homotopy theoretic construction of the (real) Baum-Connes assembly map.  Indeed, we have maps
\[B\Gamma_+\wedge\KO\simeq\underset{\mathbf{Or}_{\mathcal{TRV}}(\Gamma)}{\hocolim}\KO\rightarrow\underset{\orbf}{\hocolim}\KO\rightarrow\underset{\mathbf{Or}_{\mathcal{ALL}}(\Gamma)}{\hocolim}\KO\simeq\KO(C^\ast_r(\Gamma;\RR)). \]
The assembly map $\mu_\RR$ is then $\pi_n$ applied to the composite.

\subsection{Group $C^\ast$-algebras and $KK$-theory}
In this section we give a brief outline of Kasparov's $KK$-theory, the material here will not be used elsewhere in the paper.  The theory was introduced by Kasparov in \cite{Kasparov1987,Kasparov1988} in relation to the Novikov Conjecture.  The original formulation of the Baum-Connes Conjecture using $KK$-theory was given in \cite{BaumConnesHigson1994}.

For a $C^\ast$-algebra $A$ define $\mathbf{M}_\infty(A)$ to be the direct limit of sets of $(n\times n)$-matrices over $A$ as $n\rightarrow\infty$.  Similarly, define $\GL_\infty(A)$ to be the direct limit of groups of invertible $(n\times n)$-matrices over $A$.

Topological or operator $K$-theory is a $2$-periodic homology theory of unital $C^\ast$-algebras denoted $K^{\rm{top}}_\ast(-)$.  The zeroth $K$-group of a unital $C^\ast$-algebra $A$ is defined to be the Grothendieck group of the set of projections in $\mathbf{M}_\infty(A)$ up to Murray von Neumann equivalence.  The first $K$-group is defined to be $\GL_\infty(A)/\GL_\infty(A)_0$, where $\GL_\infty(A)_0$ is the path component of the identity.

An alternative formulation is given by Kasparov's bifunctor $KK(-,-)$.  For any two $C^\ast$-algebras $A$ and $B$ there is an abelian group $KK(A,B)$.  An element of $KK(A,B)$ is a homotopy class of $(A,B)$-Fredholm bimodules (see \cite[Section~3]{AparicioJulgValette20} for the precise definition).  The zeroth $K$-group of $A$ from before is recovered as $KK(\CC,A)$ and the first $K$-group is recovered as $KK(C_0(\RR),A)$.

Let $\Gamma$ be a discrete group.  The \emph{reduced $C^\ast$-algebra} of $\Gamma$, denoted $C^\ast_r(\Gamma)$, is the norm closure of the algebra of bounded operators on $\ell^2(\Gamma)$ by the left regular representation of $\Gamma$.  The algebra and its $K$-groups are intimately related with the theory of elliptic operators on manifolds $M$ with fundamental group $\Gamma$.  For more information the reader should consult the survey \cite{AparicioJulgValette20} and the references therein.

\section{Equivariant $KO$-homology of $\SL_3(\ZZ)$}\label{sec.SL3}
\subsection{A classifying space for proper actions}
A model for $X=\underbar{E}\SL_3(\ZZ)$ can be constructed as a $\SL_3(\ZZ)$-equivariant deformation retract of the symmetric space $\SL_3(\RR)/O(3)$.  This construction has been detailed several times in the literature (\cite[Theorem~2]{Sou78}, \cite[Theorem~2.4]{Henn99} or \cite[Theorem~13]{SanchezGarcia:SL}), so rather than detailing it again here, we simply extract the relevant cell complex and cell stabilisers.  Specifically, we follow the notation of S\'anchez-Garc\'ia \cite{SanchezGarcia:SL} and collect the information in Table~\ref{tab.sl3cellcmplx}.

\begin{table}[h]
    \centering
    \[\begin{array}{|c|c|c|c|c|}
    \hline
    \rm Dimension & \rm Cell & \rm Boundary & \rm Stabiliser \\
    \hline
    \hline
    3 & T_1 & -t_1+t_2-t_3+t_4-t_5 & \{1\}\\
    \hline
    \multirow{5}{*}{2} & t_1 & e_1-e_2-e_4 & \ZZ_2\\
    & t_2 & e_4-e_5+e_6 & \{1\}\\
    & t_3 & e_6-e_7+e_8 & \ZZ_2^2\\
    & t_4 & e_1-e_3+e_5+e_8 & \ZZ_2\\
    & t_5 & e_2-e_3+e_6-e_6+e_7 & \ZZ_2\\
    \hline
    \multirow{8}{*}{1} & e_1 & v_1-v_2 & \ZZ_2^2\\
    & e_2 & v_3-v_1 & D_3\\
    & e_3 & v_5-v_1 & D_3\\
    & e_4 & v_3-v_2 & \ZZ_2\\
    & e_5 & v_4-v_2 & \ZZ_2\\
    & e_6 & v_4-v_3 & \ZZ_2^2\\
    & e_7 & v_5-v_3 & D_4\\
    & e_8 & v_5-v_4 & D_4\\
    \hline
    \multirow{5}{*}{0} & v_1 &\multirow{5}{*}{-} & \sym(4)\\
    & v_2 & & D_6\\
    & v_3 & & \sym(4)\\
    & v_4 & & D_4\\
    & v_5 & & \sym(4)\\
    \hline
    \end{array} \]
    \caption{Cell structure and stabilisers of a model for $\underbar{E}\SL_3(\ZZ)$.}
    \label{tab.sl3cellcmplx}
\end{table}

\subsection{Proof of Theorem~\ref{thm.KO.sl3}}
The calculation of the equivariant $KO$-groups will require the following proposition and an analysis of the representation theory of the finite subgroups of $\SL_3(\ZZ)$.  We remark that one could prove a dozen subtle variations on the theme of the following proposition.  However, rather than do this we offer the slogan:  ``Computations with coefficients in $KO_n^\Gamma(-)$ can be greatly simplified by looking for chain maps to the Bredon chain complex with coefficients in $\repc(-)$."

\begin{prop}\label{prop.KtoKO}
Let $\Gamma$ be a discrete group, $\calf=\mathcal{FIN}$ and suppose $X$ is a proper $\Gamma$-CW complex with finitely many $\Gamma$ orbits of cells in each dimension.  Assume that for every cell stabiliser the real, complex and quarternionic character tables are equal, then the Atiyah-Hirzebruch spectral sequence converging to $KO^\Gamma_\ast(X)$ has $E^2$-page isomorphic to
\[E^2_{p,q}=H_p^{\mathcal{FIN}}(X;K^\Gamma_0)\otimes KO_q(\ast)\oplus \Tor^\ZZ_1[H_{p-1}^{\mathcal{FIN}}(X;K^\Gamma_0),KO_q(\ast)] \]
where for $q=0,\dots,7$ we have
\[KO_{q}(\ast)=\ZZ,\quad\ZZ_2,\quad\ZZ_2,\quad 0,\quad\ZZ,\quad0,\quad0,\quad0 \]
and the remaining groups are given by $8$-fold Bott-periodicity.
\end{prop}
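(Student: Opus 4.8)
The plan is to show that the stated hypothesis forces the $KO$-coefficient system to split off $KO_q(\ast)$ as a tensor factor: concretely, that the Bredon module $KO_q^\Gamma(-)$ is \emph{naturally} isomorphic to $K_0^\Gamma(-)\otimes_\ZZ KO_q(\ast)=\repc(-)\otimes_\ZZ KO_q(\ast)$ on the part of $\orbf$ seen by $X$. Once this identification of coefficient systems is in place, the whole computation reduces to linear algebra over $\ZZ$ and an application of the universal coefficient theorem, so the real content lies in the representation-theoretic analysis of the $x$-ification maps.

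First I would unwind the hypothesis. For a finite group $H$ the equality of the real, complex and quaternionic character tables forces every complex irreducible of $H$ to be of real type (Frobenius--Schur indicator $+1$), so that $\RR H\cong\prod_i M_{d_i}(\RR)$ and $\HH H\cong\prod_i M_{d_i}(\HH)$ have the same number of simple factors as $\CC H\cong\prod_i M_{d_i}(\CC)$. Thus $\repr(H)$, $\repc(H)$ and $\reph(H)$ are free abelian of equal rank; the complexification $\nu$ and the symplectification $\sigma$ send irreducibles to irreducibles bijectively and are therefore isomorphisms, while $\rho=2\nu^{-1}$ and $\eta=2\sigma^{-1}$, using $\rho\nu=2$ and $\eta\sigma=2$ (the latter because $\eta\sigma(\phi)=\phi+\bar\phi=2\phi$ on real-type classes). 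Feeding this into the eight-term description of $KO_n^\Gamma(\Gamma/H)$ recorded above, I would check each value separately: $n=0$ gives $\repr(H)\cong\repc(H)$ via $\nu$; $n=1$ gives $\repr(H)/\rho(\repc(H))=\repr(H)/2\repr(H)\cong\repc(H)\otimes\ZZ_2$; $n=2$ gives $\repc(H)/\eta(\reph(H))=\repc(H)/2\repc(H)$; $n=4$ gives $\reph(H)\cong\repc(H)$ via $\sigma$; while $n=3,5,6,7$ all vanish because $\sigma$ and $\nu$ are surjective. In every degree the answer is $\repc(H)\otimes KO_n(\ast)$.

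Next I would promote these objectwise identifications to an isomorphism of coefficient systems. Since complexification, realification, symplectification and restriction all commute with induction, the transformations $\nu,\rho,\sigma,\eta$ are natural transformations of Bredon modules; under the hypothesis $\nu$ and $\sigma$ are natural isomorphisms on the full subcategory of $\orbf$ spanned by the cell stabilisers of $X$ (every morphism appearing in the Bredon differential has source and target a cell stabiliser, all of which satisfy the hypothesis). Hence $KO_q^\Gamma(-)\cong\repc(-)\otimes_\ZZ KO_q(\ast)$ as Bredon modules on this subcategory, and the induced maps in the Bredon differential are precisely $\repc(\varphi)\otimes\mathrm{id}$. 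Therefore the chain complex computing $E^2_{\ast,q}$ satisfies
\[C_\ast(X;KO_q^\Gamma)\;\cong\;C_\ast(X;\repc)\otimes_\ZZ KO_q(\ast)\]
with differential $\partial^{\repc}\otimes\mathrm{id}$. Because there are finitely many $\Gamma$-orbits of cells in each dimension and each $\repc(H)$ is free abelian, $C_\ast(X;\repc)$ is a complex of finitely generated free abelian groups, so the universal coefficient theorem applies with the fixed coefficient group $KO_q(\ast)$ and yields
\[E^2_{p,q}=H_p^{\FIN}(X;K^\Gamma_0)\otimes KO_q(\ast)\;\oplus\;\Tor^\ZZ_1\!\big[H_{p-1}^{\FIN}(X;K^\Gamma_0),\,KO_q(\ast)\big].\]

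The main obstacle I anticipate is the quaternionic bookkeeping together with the naturality claim. Establishing that $\sigma\colon\repc\to\reph$ is an isomorphism and that $\eta=2\sigma^{-1}$ precisely on real-type classes (the degrees $n=2,4,5$) is the delicate representation-theoretic step, since $\reph$ and the maps $\sigma,\eta$ are less transparent than $\nu$. More importantly, it is \emph{naturality} — not merely the objectwise equality of groups — that licenses pulling $KO_q(\ast)$ out as a tensor factor and hence invoking the universal coefficient theorem; so the crux is verifying that the identifying maps can be taken to be the natural transformations $\nu$ and $\sigma$ uniformly across all cell stabilisers.
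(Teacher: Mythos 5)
Your proposal is correct and follows essentially the same route as the paper: both rest on the observation that equality of the three character tables makes $\nu$ and $\sigma$ isomorphisms and $\rho$, $\eta$ multiplication by $2$, identify each coefficient module $KO_q^\Gamma(\Gamma/H)$ accordingly, and conclude via the Universal Coefficient Theorem. The only difference is organisational — you package the eight rows into a single natural isomorphism $KO_q^\Gamma(-)\cong\repc(-)\otimes_\ZZ KO_q(\ast)$ of Bredon modules and apply the UCT once, while the paper runs the same argument row by row; your explicit attention to naturality of the x-ification maps is a point the paper leaves implicit.
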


Note that the $\Tor$ terms vanish except possibly when $q=1$ or $2$.

\begin{proof}
Since the three character tables are equal, the complexification from $\nu:\repr\to\repc$ and the symplectification from $\sigma:\repc\to\reph$ are isomorphisms.  In the other direction, the complexification from $\eta:\reph\to\repc$ and the realification from $\rho:\repc\to\repr$ correspond to multiplication by $2$.  We will now compute each row of the spectral sequence in turn.

$\mathbf{q=0:}$  We have $E^2_{p,0}=H^\FIN_p(X;KO_0^\Gamma)$ which is exactly equal to $H^\FIN_p(X;\repr)$, the result follows from the isomorphism $H^\FIN_p(X;\repr)\cong H_p^{\mathcal{FIN}}(X;K^\Gamma_0)\otimes \ZZ$ and the vanishing of the $\Tor$ group.

$\mathbf{q=1:}$ The realification $\rho:\repc\to\repr$ is multiplication by $2$, thus the cokernel of the map
\[\rho_\ast:C^\FIN_\ast(X;\repc)\rightarrow C^\FIN_\ast(X;\repr) \]
is the modulo $2$ reduction of $C^\FIN_\ast(X;\repr)$.  Consider $C^\FIN_\ast(X;\repr)$ as a chain complex of abelian groups.  The result follows from the Universal Coefficient Theorem in homology with $\ZZ_2$ coefficients applied to the homology of the chain complex $C^\FIN_\ast(X;\repr)$.

$\mathbf{q=2:}$ The complexification $\eta:\reph\to\repc$ is multiplication by $2$ and $C^\FIN_\ast(X;\repc)$ is isomorphic to $C^\FIN_\ast(X;\repr)$.  The result now follows as in the case $q=1$.

$\mathbf{q=3:}$ Immediate since $KO^\Gamma_3(-)=0$.

$\mathbf{q=4:}$  Since $\nu$ and $\sigma$ are both isomorphisms, their composition gives an isomorphism of Bredon chain complexes $C_\ast^\FIN(X;\repr)\cong C_\ast^\FIN(X;\reph)$.  The result now follows as in the $q=0$ case.

$\mathbf{q=5:}$  Since $\sigma$ is an isomorphism, the cokernel of the map
\[\sigma_\ast: C^\FIN_\ast(X;\repc)\rightarrow C^\FIN_\ast(X;\reph) \]
vanishes.  The result follows.

$\mathbf{q=6:}$  Since $\nu$ is an isomorphism, the cokernel of the map
\[\nu_\ast: C^\FIN_\ast(X;\repr)\rightarrow C^\FIN_\ast(X;\repc) \]
vanishes.  The result follows.

$\mathbf{q=7:}$ Immediate since $KO^\Gamma_7(-)=0$.
\end{proof}

\begin{thm}[Theorem~\ref{thm.KO.sl3}]\label{thm.KO.sl3.body}
Let $\Gamma=\SL_3(\ZZ)$, then for $n=0,\dots,7$ we have
\[KO^\Gamma_{n}(\underbar{\emph{E}}\Gamma)= \ZZ^8,\quad \ZZ_2^8,\quad \ZZ_2^8,\quad 0,\quad \ZZ^8, \quad 0, \quad 0, \quad 0  \]
and the remaining groups are given by $8$-fold Bott-periodicity.
\end{thm}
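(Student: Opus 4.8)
The plan is to feed the known Bredon homology of $\SL_3(\ZZ)$ into Proposition~\ref{prop.KtoKO}, once its hypothesis has been checked. First I would verify that every cell stabiliser appearing in Table~\ref{tab.sl3cellcmplx}, namely the groups $\{1\}$, $\ZZ_2$, $\ZZ_2^2$, $D_3$, $D_4$, $D_6$ and $\sym(4)$, has coinciding real, complex and quaternionic character tables. As in the proof of Proposition~\ref{prop.KtoKO}, this amounts to requiring every complex irreducible of each group to be of real type, i.e.\ to have Frobenius--Schur indicator $+1$, so that $\nu$ and $\sigma$ are isomorphisms while $\eta$ and $\rho$ are multiplication by $2$. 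This is classical: the elementary abelian $2$-groups $\ZZ_2$ and $\ZZ_2^2$ have only $\pm1$-valued characters, the symmetric group $\sym(4)$ has all irreducibles defined over $\QQ$, and every dihedral group $D_n$ (here of orders $6$, $8$ and $12$) is realisable over $\RR$. Hence the hypothesis of Proposition~\ref{prop.KtoKO} holds for $X=\underbar{E}\SL_3(\ZZ)$.

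Next I would record the input Bredon homology. The coefficient system $K^\Gamma_0(-)$ is $\repc(-)$, and S\'anchez-Garc\'ia~\cite{SanchezGarcia:SL} computes
\[H_n^{\FIN}(\underbar{E}\SL_3(\ZZ);\repc)=\begin{cases}\ZZ^8 & n=0,\\ 0 & n\geq 1.\end{cases}\]
The two features that matter for us are that this group is \emph{free} abelian and that it is \emph{concentrated in degree $0$}.

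With this in hand, Proposition~\ref{prop.KtoKO} produces the $E^2$-page
\[E^2_{p,q}=H_p^{\FIN}(X;\repc)\otimes KO_q(\ast)\oplus\Tor^\ZZ_1\bigl[H_{p-1}^{\FIN}(X;\repc),KO_q(\ast)\bigr].\]
Since $H_p^{\FIN}(X;\repc)$ vanishes for $p\geq 1$ and $H_0^{\FIN}(X;\repc)\cong\ZZ^8$ is free, every $\Tor$ term vanishes and the whole page collapses onto the single column $p=0$, where $E^2_{0,q}=\ZZ^8\otimes KO_q(\ast)$. Reading off $KO_q(\ast)$ for $q=0,\dots,7$ then yields $\ZZ^8$, $\ZZ_2^8$, $\ZZ_2^8$, $0$, $\ZZ^8$, $0$, $0$, $0$.

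Finally I would observe that the spectral sequence degenerates for formal reasons: each differential $d_r$ lowers the filtration degree $p$ by $r\geq 2$, so with the $E^2$-page supported on $p=0$ every differential vanishes and $E^2=E^\infty$. Moreover each total degree $n$ receives a contribution only from $E^\infty_{0,n}$, so there are no extension problems and $KO^\Gamma_n(\underbar{E}\SL_3(\ZZ))\cong E^\infty_{0,n}$ gives exactly the stated groups, with the remainder determined by $8$-fold Bott periodicity. The only genuine work is thus the group-by-group verification of the character-table condition together with the import of the degree-$0$ Bredon homology from~\cite{SanchezGarcia:SL}; once these are in place the collapse is immediate, and I do not expect any substantive obstacle beyond making sure that none of the dihedral or symmetric stabilisers secretly carries a complex- or quaternionic-type irreducible (none does).
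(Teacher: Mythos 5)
Your proposal is correct and takes essentially the same approach as the paper: check that every cell stabiliser ($\{1\}$, $\ZZ_2$, $\ZZ_2^2$, $D_3$, $D_4$, $D_6$, $\sym(4)$) has all irreducibles of real type so that Proposition~\ref{prop.KtoKO} applies, feed in S\'anchez-Garc\'ia's Bredon homology ($\ZZ^8$ in degree $0$, zero above), and observe the single-column $E^2$-page collapses with no differentials or extension problems. The only cosmetic difference is that the paper verifies the hypothesis by computing Schur indicators directly and recaps the explicit Bredon chain complex, whereas you invoke the classical real-type facts for dihedral, symmetric and elementary abelian $2$-groups and cite the homology; both are sound.
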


\begin{proof}
Let $\Gamma=\SL_3(\ZZ)$, $\calf=\mathcal{FIN}$ and $X=\underbar{E}\SL_3(\ZZ)$. We can now complete the calculation for the equivariant $KO$-homology groups.  First, we recap the calculation of the Bredon chain complex with complex representation ring coefficients due to S\'anchez-Garc\'ia.  We have a chain complex
\[\begin{tikzcd}
0 \arrow[r] & \ZZ \arrow[r, "\partial_3"] & \ZZ^{11} \arrow[r, "\partial_2"] & \ZZ^{28} \arrow[r, "\partial_1"] & \ZZ^{26} \arrow[r] & 0
\end{tikzcd} \]
where 
\[\partial_3\sim\begin{bmatrix}1& \mathbf{0}_{1\times10}\end{bmatrix},\quad \partial_2\sim\begin{bmatrix} I_{10} & \mathbf{0}_{10\times18}\\ \mathbf{0}_{10\times1} & \mathbf{0}_{1\times 18}
\end{bmatrix},\quad \text{and}\quad 
\partial_1\sim\begin{bmatrix}I_{18} & \mathbf{0}_{18\times8}\\ \mathbf{0}_{10\times18} & \mathbf{0}_{10\times8} \end{bmatrix}.\]
Therefore, the homology groups of the chain complex are isomorphic to $\ZZ^8$ in dimension $0$ and to $0$ in every other dimension.

Now, the cell stabiliser subgroups of $\SL_3(\ZZ)$ acting on $X$ are isomorphic to $\{1\}$, $\ZZ_2$, $\ZZ_2^2$, $D_3$, $D_4$, $\sym(4)$ and $D_6$.  Each of these satisfies the conditions of the proposition above.  This is easily checked by computing the Schur indicators of each of the irreducible characters of each group. Since the Schur indicator equals $1$ in every case we conclude the three character tables for each group are equal (see for instance \cite[Exercise~3.38]{FultonHarris1991}).  Applying this to the previous calculation we obtain a single non-trivial column when $p=0$ in the Atiyah-Hirzebruch spectral sequence and so it collapses trivially.
\end{proof}

\begin{corollary}[Corollary~\ref{cor.KO.gl3}] \label{cor.KO.gl3.body}
Let $\Gamma=\GL_3(\ZZ)$, then for $n=0,\dots,7$ we have
\[KO^\Gamma_{n}(\underbar{\emph{E}}\Gamma)= \ZZ^{16},\quad \ZZ_2^{16},\quad \ZZ_2^{16},\quad 0,\quad \ZZ^{16}, \quad 0, \quad 0, \quad 0  \]
and the remaining groups are given by $8$-fold Bott-periodicity.
\end{corollary}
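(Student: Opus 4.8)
The plan is to follow the route signposted in the introduction: deduce the $\GL_3(\ZZ)$ computation from the $\SL_3(\ZZ)$ one by a K\"unneth argument at the level of Bredon homology, and then feed the result into Proposition~\ref{prop.KtoKO} exactly as in the proof of Theorem~\ref{thm.KO.sl3.body}. First I would record the splitting $\GL_3(\ZZ)\cong\SL_3(\ZZ)\times\ZZ_2$. Since $3$ is odd, the scalar matrix $-I$ has determinant $-1$, so $\langle -I\rangle\cong\ZZ_2$ maps isomorphically onto $\{\pm1\}$ under the determinant and splits the short exact sequence $1\to\SL_3(\ZZ)\to\GL_3(\ZZ)\xrightarrow{\det}\ZZ_2\to1$; as $-I$ is central, this splitting realises $\GL_3(\ZZ)$ as a direct product.

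Next, a model for $\underbar{E}\GL_3(\ZZ)$ is $\underbar{E}\SL_3(\ZZ)\times\underbar{E}\ZZ_2$, and since $\ZZ_2$ is finite we may take $\underbar{E}\ZZ_2=\ast$. I would then apply the K\"unneth theorem \cite[Theorem~3.6]{SanchezGarcia:Coxeter} in Bredon homology with complex representation ring coefficients. The essential point is that $\repc(-)$ for the product group is the external tensor product of the two representation-ring functors, coming from $\repc(G\times H)\cong\repc(G)\otimes_\ZZ\repc(H)$ for finite groups $G$ and $H$, compatibly with induction in the orbit category. Because the Bredon homology of $\SL_3(\ZZ)$ is $\ZZ^8$ concentrated in degree $0$ and $\repc(\ZZ_2)\cong\ZZ^2$ is concentrated in degree $0$, everything in sight is free abelian and all $\Tor$ terms vanish; the K\"unneth formula then gives $H_\ast^\FIN(\underbar{E}\GL_3(\ZZ);\repc)=\ZZ^{16}$ in degree $0$ and $0$ in every other degree.

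Finally, I would check that Proposition~\ref{prop.KtoKO} applies to $\GL_3(\ZZ)$. The cell stabilisers of the product complex are products of a finite subgroup of $\SL_3(\ZZ)$ with a subgroup of $\ZZ_2$; since the Frobenius--Schur indicator is multiplicative under tensor products of characters and equals $1$ on every irreducible of each factor, it equals $1$ on every irreducible of the product, so the real, complex and quaternionic character tables again coincide. Proposition~\ref{prop.KtoKO} then produces an $E^2$-page with a single nontrivial column at $p=0$, computed from $H_0^\FIN=\ZZ^{16}$ by tensoring with $KO_q(\ast)$, yielding $\ZZ^{16},\ \ZZ_2^{16},\ \ZZ_2^{16},\ 0,\ \ZZ^{16},\ 0,\ 0,\ 0$, and the spectral sequence collapses.

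I expect the only genuine subtlety to lie in the second step: verifying that the K\"unneth isomorphism is compatible with the induction maps defining the Bredon-module structure of $\repc(-)$, i.e.\ that the external-tensor-product decomposition holds functorially on the orbit category $\mathbf{Or}_\FIN(\GL_3(\ZZ))$ and not merely objectwise. By contrast, the Schur-indicator check for the product stabilisers and the vanishing of the $\Tor$ terms are routine, the latter being automatic from freeness.
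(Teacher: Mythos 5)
Your proposal is correct and follows essentially the same route as the paper's proof: the splitting $\GL_3(\ZZ)\cong\SL_3(\ZZ)\times\ZZ_2$, the K\"unneth theorem of S\'anchez-Garc\'\i{}a, the observation that the product of $\ZZ_2$ with each $\SL_3(\ZZ)$ cell stabiliser still satisfies Proposition~\ref{prop.KtoKO} (which you justify, correctly, via multiplicativity of the Frobenius--Schur indicator), and the trivial collapse of the single-column Atiyah--Hirzebruch spectral sequence. The only cosmetic difference is that the paper applies the K\"unneth formula row-by-row to the $E^2$-page while you apply it once to the Bredon homology with $\repc$-coefficients before invoking Proposition~\ref{prop.KtoKO}; these are equivalent, and your explicit checks (the splitting via $-I$, the functoriality concern for the external tensor product, which is exactly what the cited theorem supplies) fill in details the paper leaves implicit.
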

\begin{proof}
First, note that the direct product of $\ZZ_2$ with any of the cell stabiliser subgroups of $\SL_3(\ZZ)$ still satisfies the conditions of the Proposition~\ref{prop.KtoKO}.  Now, we may compute the $E^2$-page of the associated Atiyah Hirzebruch spectral sequence by applying the K\"unneth formula \cite[Theorem~3.6]{SanchezGarcia:Coxeter} to the calculation of each row of the $E^2$-page for $\underbar{E}\SL_3(\ZZ)$.  Since the spectral sequence is concentrated in a single column we have isomorphisms
\[ KO^{\GL_3(\ZZ)}_n(\underbar{E}\GL_3(\ZZ))\cong KO^{\SL_3(\ZZ)}_n(\underbar{E}\SL_3(\ZZ))\otimes KO^{\ZZ_2}_n(\ast), \]
from which the result is immediate.
\end{proof}

\section{Equivariant $K$-homology of Fuchsian groups}\label{sec.KthryFuchsian}
In this section we compute the equivariant $K$-homology of every finitely generated Fuchsian group, that is, a finitely generated discrete subgroup of $\PSL_2(\RR)$.  The reason for this apparent detour is that we will later split the groups $\PSLp$ as amalgamated free products of certain Fuchsian subgroups.  Thus, we can use a Mayer-Vietoris type argument to compute their $K$-homology.

Note that Theorem~\ref{thm.EquiKthryFuchsian}\ref{thm.EquiKthryCocoFuchsian} was computed in \cite{luck2000computations} along with a more general result for cocompact NEC groups.  Moreover, their integral cohomology was determined by the author in \cite{Hughes2019cohomology}.  An introduction to Fuchsian groups is provided by \cite{Katok1992}.

 The computation is made easier by the fact that every finitely generated Fuchsian group is described by piece of combinatorial data called a {\it signature} \cite[Chapter 4.3]{Katok1992}. Indeed, a Fuchsian group of {\it signature} $[g,s;m_1,\dots,m_r]$ has presentation
\[\left\langle a_1,\dots,a_{2g},c_1,\dots,c_s,d_1.\dots,d_r\ |\ \prod_{i=1}^g[a_i,a_{g+i}]\prod_{j=1}^rd_j\prod_{k=1}^sc_k=d_1^{m_1}=\dots=d_r^{m_r}=1 \right\rangle \]
and acts on the hyperbolic plane $\RR\textbf{H}^2$ with a $4g+2s+2r$ sided fundamental polygon.  The tessellation of the polygon under the group action has $1+s+r$ orbits of vertices, $s$ of which are on the boundary $\partial\RR\textbf{H}^2$, $2g+s+r$ orbits of edges and $1$ orbit of faces.  All edge and face stabilisers are trivial.  All vertex stabilisers are trivial except for $r$ orbits of vertices, each of which is stabilised by some $\ZZ_{m_j}$.  Note that if $s=0$ we say $\Gamma$ is \emph{cocompact}.  

The signature also describes a quotient $2$-orbifold which is homeomorphic to a genus $g$ surface with $s$ points removed.  The orbifold data is then given by the $r$ marked points, each corresponding to one of the $m_j$, or equivalently a maximal conjugacy of finite subgroups. 

If $r=0$, we do not write any $m_j$ in the signature. In which case $\Gamma$ has signature $[g,s;]$, is torsionfree, and isomorphic to either the fundamental group of a genus $g$ surface, or a free group of rank $2g+s-1$.

\begin{thm}\label{thm.EquiKthryFuchsian}
Let $\Gamma$ be a Fuchsian group of signature $[g,s;m_1,\dots,m_r]$.
\begin{enumerate}[label=(\alph*)]
\item \label{thm.EquiKthryCocoFuchsian} If $s=0$ then, \[K^\Gamma_n(\underbar{\emph{E}}\Gamma)=K_n(C_r^\ast(\Gamma))=
\begin{cases}
\ZZ^{2-r+\sum_{j=1}^rm_j} & n\text{ even,}\\
\ZZ^{2g} & n\text{ odd.}
\end{cases}\]

\item \label{thm.EquiKthryCofinFuchsian} If $s>0$ then, \[K^\Gamma_n(\underbar{\emph{E}}\Gamma)=K_n(C_r^\ast(\Gamma))=
\begin{cases}
\ZZ^{1-r+\sum_{j=1}^rm_j} & n\text{ even,}\\
\ZZ^{2g+s-1} & n\text{ odd.}
\end{cases}\]
\end{enumerate}
\end{thm}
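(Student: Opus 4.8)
The plan is to feed the equivariant Atiyah--Hirzebruch spectral sequence $E^2_{p,q}=H^{\FIN}_p(\underline{E}\Gamma;K^\Gamma_q(-))\Rightarrow K^\Gamma_{p+q}(\underline{E}\Gamma)$ with Bredon homology coefficients in the complex representation ring. Since $K^\Gamma_q(\Gamma/H)=\repc(H)$ for $q$ even and $0$ for $q$ odd, every odd row of the $E^2$-page vanishes identically, so each differential has a zero source or a zero target and the sequence collapses at $E^2$. As $\underline{E}\Gamma$ has a model of dimension at most $2$, the only surviving groups are $H^\FIN_0,H^\FIN_1,H^\FIN_2$ with coefficients in $\repc$, and the extensions split since these will turn out to be free abelian (so $\Tor$ and $\mathrm{Ext}$ vanish). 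Hence $K^\Gamma_{\mathrm{even}}\cong H_0^\FIN\oplus H_2^\FIN$ and $K^\Gamma_{\mathrm{odd}}\cong H_1^\FIN$, and the theorem reduces to computing these three Bredon groups. The identification with $K_\ast(C^\ast_r(\Gamma))$ then follows from the Baum--Connes conjecture, which holds here because finitely generated Fuchsian groups, being discrete subgroups of $\mathrm{Isom}(\RR\mathbf{H}^2)$, have the Haagerup property \cite{HigsonKasparov1997}.

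For $s=0$ I would take $\underline{E}\Gamma=\RR\mathbf{H}^2$ with the tessellation from the statement; its quotient has underlying space the closed orientable genus $g$ surface decorated with $r$ cone points, giving one trivial-stabiliser vertex orbit $v_0$, cone vertex orbits $v_1,\dots,v_r$ with stabilisers $\ZZ_{m_j}$, the $2g$ genus loops together with $r$ spokes $v_0\to v_j$ as edge orbits, and a single $2$-cell. For $s>0$ I would instead observe that eliminating the redundant generator $c_s$ from the presentation exhibits $\Gamma\cong F_{2g+s-1}\ast\ZZ_{m_1}\ast\dots\ast\ZZ_{m_r}$; the Bass--Serre tree of this free-product decomposition (with the free factor realised as loops carrying trivial edge groups, so that all vertex stabilisers are trivial or some $\ZZ_{m_j}$, hence finite) is a $1$-dimensional model for $\underline{E}\Gamma$, whose quotient graph of groups has one trivial vertex, $r$ cone vertices, and $r+2g+s-1$ edges.

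The Bredon boundary $\partial_1$ is governed by induction of representations: a loop has equal endpoints and contributes $0$, whereas a spoke $v_0\to v_j$ sends the generator of $\repc(\{1\})=\ZZ$ to $\mathrm{Ind}^{\ZZ_{m_j}}_{\{1\}}\mathbf{1}$, namely the regular representation $\sum_i\chi^{(j)}_i\in\repc(\ZZ_{m_j})$. Thus $\mathrm{im}\,\partial_1$ is generated by the $r$ elements $\mathbf{1}_{v_0}-\sum_i\chi^{(j)}_i$, which lie in distinct summands; consequently $H_0^\FIN=\mathrm{coker}\,\partial_1$ is free abelian of rank $(1+\sum_j m_j)-r=1-r+\sum m_j$ (equivalently, the number of conjugacy classes of finite-order elements of $\Gamma$), while $\ker\partial_1$ is free on the $2g$ (resp.\ $2g+s-1$) loops. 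For the top differential, in the cocompact case the attaching word $\prod[a_i,a_{g+i}]\prod_j s_j\bar s_j$ abelianises to $0$ because the underlying space is a closed surface with $H_2=\ZZ$, so $\partial_2=0$, giving $H_2^\FIN=\ZZ$ and $H_1^\FIN=\ker\partial_1=\ZZ^{2g}$; when $s>0$ there is no $2$-cell, so $H_2^\FIN=0$ and $H_1^\FIN=\ker\partial_1=\ZZ^{2g+s-1}$. Substituting into the collapsed spectral sequence yields both cases.

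The main obstacle is pinning down the boundary maps correctly rather than the rank bookkeeping. Two points need care. First, one must recognise that the cone-point relations $d_j^{m_j}=1$ are recorded entirely by the vertex stabilisers and never enter the cellular attaching map, so that $\partial_2$ genuinely vanishes even though the $d_j$ appear in the group relator. Second, $\partial_1$ must be computed as induction, which sends the trivial representation to the regular representation and is precisely what forces each cone point to contribute its reduced representation ring $\widetilde{\repc}(\ZZ_{m_j})\cong\ZZ^{m_j-1}$ to $H_0$. The single calculational step I would verify carefully is that $\mathrm{coker}\,\partial_1$ is torsion-free, so that the answer is free abelian and the spectral-sequence extensions split; this holds because the $r$ defining relations can be solved successively for distinct generators, each with coefficient $\pm1$.
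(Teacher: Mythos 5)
Your proposal is correct and follows essentially the same route as the paper: the same models for $\underbar{E}\Gamma$ (the tessellated hyperbolic plane when $s=0$, and the Bass--Serre tree of the splitting $F_{2g+s-1}\ast\ZZ_{m_1}\ast\dots\ast\ZZ_{m_r}$ when $s>0$), the same Bredon chain complexes with the loops mapping to zero, each spoke sending the generator of $\repc(\{1\})$ to $\sum_{l}x_{j,l}-z$, and $\partial_2=0$, followed by the same collapsed Atiyah--Hirzebruch spectral sequence and the identification $K^\Gamma_\ast(\underbar{E}\Gamma)\cong K_\ast(C^\ast_r(\Gamma))$ via Baum--Connes through the Haagerup property. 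Your additional verifications (that $\partial_2$ vanishes because the attaching word abelianises to zero in the closed quotient surface, and that $\mathrm{coker}\,\partial_1$ is torsion-free so the spectral-sequence extensions split) are correct elaborations of steps the paper records as easy calculations.
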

\begin{proof}[Proof of \ref{thm.EquiKthryCocoFuchsian}]
Let $\Gamma$ be a Fuchsian group of signature $[g,s;m_1,\dots,m_r]$ with $s=0$ and $\mathcal{F}=\FIN$. Since $\Gamma$ satisfies the Baum-Connes conjecture \cite{HigsonKasparov1997} it is enough to compute the equivariant $K$-homology.  The hyperbolic plane with the induced cell structure of the $\Gamma$ action is a model for $\underbar{E}\Gamma$ (see for instance \cite{Macbeath1967}).  Recall that the cell structure has $r+1$ orbits of vertices, $2g+r$ orbits of edges and exactly $1$ orbit of $2$-cells.  One vertex $v_0$ is stabilised by the trivial group and for $j=1,\dots,r$ the vertex $v_j$ is stabilised by $\ZZ_{m_j}$.  Thus, we have a Bredon chain complex
\[\begin{tikzcd} 
0 & \arrow[l] \ZZ\oplus \left(\bigoplus_{j=1}^r \repc(\ZZ_{m_j})\right) & \arrow[l,"\partial_1"'] \ZZ^{2g+r} & \arrow[l,"\partial_2"'] \ZZ & \arrow[l] 0,
\end{tikzcd}\]
and substituting in $\repc(\ZZ_{m_j})=\ZZ^{m_j}$ we obtain
\[\begin{tikzcd} 
0 & \arrow[l] \ZZ \oplus \left(\bigoplus_{j=1}^r \ZZ^{m_j}\right) & \arrow[l,"\partial_1"'] \ZZ^{2g+r} & \arrow[l,"\partial_2"'] \ZZ & \arrow[l] 0.
\end{tikzcd}\]

We fix the following basis for each chain group:  In degree $0$ we have generators $x_{j,l}$, for $j=1,\dots,r$ and $l=1,\dots,m_j$, and the generator $z$.  In degree $1$ we have $a_1,\dots,a_{2g}$ and $y_1,\dots,y_r$, and in degree $2$, the generator $w$.  An easy calculation yields that $\partial_2(w)=0$, $\partial_1(a_i)=0$, and $\partial_1(y_j)=\sum_{l=1}^{m_j}x_{j,l} - z$.  Thus,
\[H_n^{\mathcal{F}}(\underbar{E}\Gamma;\repc)= \begin{cases}
\ZZ^{1+\sum_{j=1}^r(m_j-1)} & \text{if }n=0;\\
\ZZ^{2g} & \text{if }n=1;\\
\ZZ & \text{if }n=2;\\
0 & \text{otherwise}.
\end{cases} \]

The result now follows from the collapsed Atiyah-Hirzebruch spectral sequence given in \cite[Theorem~5.27]{MislinNotes} and we obtain $K^\Gamma_0(\underbar{E}\Gamma)=H^{\mathcal{F}}_0(\underbar{E}\Gamma;\repc)\oplus H^{\mathcal{F}}_2(\underbar{E}\Gamma;\repc)$ and $K^\Gamma_1(\underbar{E}\Gamma)=H^{\mathcal{F}}_1(\underbar{E}\Gamma;\repc)$.
\end{proof}

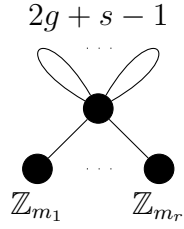
\begin{figure}[ht]
\[
\begin{tikzpicture}[baseline=(current bounding box.center)]
\node   [style=blackNode] (a) at (0,0) {};
\node [style=blackNode] (m1) at (-0.8,-0.8) {};
\node [] (m1label) at (-0.8,-1.3) {$\ZZ_{m_1}$};
\draw (a) to (m1);

\node [style=blackNode] (mr) at (0.8,-0.8) {};
\node [] (mrlabel) at (0.8,-1.3) {$\ZZ_{m_r}$};
\draw (a) to (mr);

\draw[loosely dotted] (-0.15,-0.8) to (0.2,-0.8);

\path[scale=2.5] 
        (a) edge [out= 150, in= 120, distance=5mm] (a)
        (a) edge [out= 60, in= 30, distance=5mm] (a);
\draw[loosely dotted] (-0.15,0.8) to (0.2,0.8);
\node   [] (a) at (0,1.2) {$2g+s-1$};
\end{tikzpicture}
\]
    \caption{A graph of groups for a non-cocompact Fuchsian group.}
    \label{fig.GoG.CofinFuch}
\end{figure}

\begin{proof}[Proof of \ref{thm.EquiKthryCofinFuchsian}]
Let $\Gamma$ be a Fuchsian group of signature $[g,s;m_1,\dots,m_r]$ with $s>0$ and let $\mathcal{F}=\FIN$.  In this case we can rearrange the presentation of $\Gamma$ such that we have a splitting of $\Gamma$ as an amalgamated free product $\Gamma\cong\ZZ^{s-1}\ast\ZZ_{m_1}\ast\dots\ast\ZZ_{m_r}$.  Now, $\Gamma$ splits as a finite graph of finite groups (Figure~\ref{fig.GoG.CofinFuch}) and it is easy to see that the Bass-Serre tree of $\Gamma$ is a model for $\underbar{E}\Gamma$.

We will first compute the Bredon homology $H_\ast^{\mathcal{F}}(\underbar{E}\Gamma;\repc)$ with coefficients in the representation ring, then apply the equivariant Atiyah-Hirzebruch spectral sequence. We have a Bredon chain complex
\[\begin{tikzcd} 
0 & \arrow[l] \ZZ\oplus \left(\bigoplus_{j=1}^r \repc(\ZZ_{m_j}) \right) & \arrow[l,"\partial"'] \ZZ^{2g+s-1} & \arrow[l] 0,
\end{tikzcd}\]
substituting in $\repc(\ZZ_{m_j})=\ZZ^{m_j}$ we obtain
\[\begin{tikzcd} 
0 & \arrow[l] \ZZ \oplus \left(\bigoplus_{j=1}^r \ZZ^{m_j}\right) & \arrow[l,"\partial"'] \ZZ^{2g+s-1} & \arrow[l] 0.
\end{tikzcd}\]

Let the first non-zero term have generating set $\langle x_{j,l},z\ |\ j=1,\dots,r,\ l=1,\dots,m_j\rangle$ and the second term $\langle a_1, \dots, a_{2g}, c_1,\dots,c_{s-1},d_1,\dots,d_r\rangle$.  It is easy to see the differential $\partial$ is given by $\partial(a_i)=\partial(b_i)=\partial(c_k)=0$ and $\partial(d_j)=\sum_{l=1}^{m_j}x_{j,l} - z$.  It follows that $H^{\mathcal{F}}_0(\underbar{E}\Gamma;\repc)=\ZZ^{1+\sum_{j=1}^r(m_j-1)}$, $H^{\mathcal{F}}_1(\underbar{E}\Gamma;\repc)=\ZZ^{2g+s-1}$ and $0$ otherwise.  The result now follows from the collapsed Atiyah-Hirzebruch spectral sequence \cite[Theorem~5.27]{MislinNotes}. In particular, we have $K^\Gamma_n(\underbar{E}\Gamma)=H^{\mathcal{F}}_n(\underbar{E}\Gamma;\repc)$ for $n=0,1$.
\end{proof}

\section{Computations for $\PSLp$ and $\SLp$}\label{sec.SLpKthry}
\subsection{Preliminaries}
In an abuse of notation, throughout this section we will denote the image $\{\pm A\}$ of a matrix $A\in\SL_2(\RR)$ in $\PSL_2(\RR)$ by the matrix $A$.  Recall that for $p$ a prime we have $\PSL_2(\ZZ[\frac{1}{p}])=\PSL_2(\ZZ)\ast_{\Gamma_0(p)}\PSL_2(\ZZ)$, where $\Gamma_0(p)$ is the level $p$ Hecke principle congruence subgroup (see for instance Serre's book ``Trees" \cite{SerreTrees}).  The amalgamation is specified by two embeddings of the congruence subgroup $\Gamma_0(p)$ into $\PSL_2(\ZZ)$.  The first is given by
\[\Gamma_0(p):=\left\{\begin{bmatrix} a&b\\ c&d \end{bmatrix} \in \PSL_2(\ZZ) \colon c\equiv0\pmod{p}\right\} \]
and the second via
\[\begin{bmatrix} a&b\\ c&d \end{bmatrix}\mapsto\begin{bmatrix} a&pb\\ p^{-1}c&d\end{bmatrix}. \]

In light of this we will collect some facts about each of the groups in the amalgamation.  We begin by recording (Table~\ref{tab:BredonPGamma0}) the Fuchsian signatures and the associated Bredon homology for each of the groups $\Gamma_0(p)$ and $\PSL_2(\ZZ)$.  Note that when $p\equiv11\pmod{12}$ the group $\Gamma_0(p)$ is free.

\begin{lemma}
The signatures and Bredon homology groups of $\Gamma_0(p)$ and $\PSL_2(\ZZ)$ are given in Table~\ref{tab:BredonPGamma0}.
\end{lemma}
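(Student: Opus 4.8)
The plan is to reduce the lemma to two inputs: the Fuchsian signatures of $\PSL_2(\ZZ)$ and of $\Gamma_0(p)$, and the Bredon homology formula already obtained in the proof of Theorem~\ref{thm.EquiKthryFuchsian}\ref{thm.EquiKthryCofinFuchsian}. Every group here has at least one cusp, hence $s\geq 1$ and all of them are non-cocompact, so that proof applies verbatim and gives
\[ H^{\FIN}_0(\underbar{E}\Gamma;\repc)\cong\ZZ^{1+\sum_{j=1}^r(m_j-1)},\qquad H^{\FIN}_1(\underbar{E}\Gamma;\repc)\cong\ZZ^{2g+s-1}, \]
with all higher Bredon homology vanishing. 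Thus once the signature $[g,s;m_1,\dots,m_r]$ of each group is known, the table entries are a direct substitution. First I would record that $\PSL_2(\ZZ)$ is the classical $(2,3,\infty)$-triangle group $\ZZ_2\ast\ZZ_3$ with signature $[0,1;2,3]$; substituting $g=0$, $s=1$, $(m_1,m_2)=(2,3)$ yields $H_0=\ZZ^4$ and $H_1=0$.

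The substantive step is pinning down the signature of $\Gamma_0(p)$ in each residue class of $p\bmod 12$, for which I would invoke the standard theory of the compactified modular curve $X_0(p)=\Gamma_0(p)\backslash\overline{\RR\mathbf{H}^2}$. Under the dictionary between the signature and the modular curve, $s$ equals the number of cusps, the cone points of order $2$ and $3$ are the elliptic points, and $g$ is the genus of $X_0(p)$. The cusp count is $\sum_{d\mid p}\phi(\gcd(d,p/d))=2$ (the cusps $0$ and $\infty$), so $s=2$ throughout. For $p\geq 5$ the elliptic point counts are $\nu_2=1+\left(\frac{-1}{p}\right)$ and $\nu_3=1+\left(\frac{-3}{p}\right)$; by quadratic reciprocity $\left(\frac{-1}{p}\right)=1$ iff $p\equiv 1\pmod 4$ and $\left(\frac{-3}{p}\right)=1$ iff $p\equiv 1\pmod 3$, which partitions the odd primes into the classes $p\equiv 1,5,7,11\pmod{12}$ with $(\nu_2,\nu_3)=(2,2),(2,0),(0,2),(0,0)$. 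The genus is then read off from
\[ g=1+\frac{p+1}{12}-\frac{\nu_2}{4}-\frac{\nu_3}{3}-\frac{\nu_\infty}{2}, \]
using $[\PSL_2(\ZZ):\Gamma_0(p)]=p+1$ and $\nu_\infty=2$. The small primes $p=2,3$ lie outside the Legendre-symbol formulas and I would handle them directly via their classical signatures $[0,2;2]$ and $[0,2;3]$, both of genus $0$.

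Carrying out the four generic cases produces signatures $[(p-13)/12,2;2,2,3,3]$, $[(p-5)/12,2;2,2]$, $[(p-7)/12,2;3,3]$, and $[(p+1)/12,2;\,]$ for $p\equiv 1,5,7,11\pmod{12}$ respectively, the last being torsion-free (hence $\Gamma_0(p)$ free, as noted before the lemma). Feeding each into the two displayed formulas then gives the entries of Table~\ref{tab:BredonPGamma0}; for instance $p\equiv 1\pmod{12}$ gives $H_0=\ZZ^{1+6}=\ZZ^7$ and $H_1=\ZZ^{2(p-13)/12+1}=\ZZ^{(p-7)/6}$, while the free case $p\equiv 11\pmod{12}$ gives $H_0=\ZZ$ and $H_1=\ZZ^{(p+7)/6}$.

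I expect the only real obstacle to be the bookkeeping of the second step: evaluating the two Legendre symbols correctly, checking that the genus formula returns a nonnegative integer in each class (so that the claimed signatures are admissible), and isolating the exceptional behaviour at $p=2,3$. Once the signatures are fixed the homology is mechanical, and I would cross-check the outputs against the known genera of small modular curves (e.g. $X_0(11)$ has genus $1$ and $X_0(37)$ has genus $2$) to guard against arithmetic slips.
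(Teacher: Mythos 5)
Your argument is correct, but it reaches the table by a genuinely different route from the paper. The paper never touches the modular curve $X_0(p)$: it computes the ordinary cohomology of $\Gamma_0(p)$ following Adem--Naffah, using the action of $G=\PSL_2(p)$ on the quotient tree $\calt/\Gamma(p)$, a long exact sequence in $H^\ast(B;-)$ with the permutation modules $C^0=\ZZ[G/\ZZ_2]\,|_B\oplus\ZZ[G/\ZZ_3]\,|_B$ and $C^1=\ZZ[G]\,|_B$, and then reads the free rank off $H^1(\Gamma_0(p);\ZZ)$ and the cone orders off the torsion in $H^{2n}(\Gamma_0(p);\ZZ)$, before feeding the result into Theorem~\ref{thm.EquiKthryFuchsian}\ref{thm.EquiKthryCofinFuchsian} exactly as you do. Your replacement of that cohomological computation by the classical arithmetic of $X_0(p)$ (cusp count $\nu_\infty=2$, elliptic counts $\nu_2=1+\left(\frac{-1}{p}\right)$, $\nu_3=1+\left(\frac{-3}{p}\right)$, and the index-$(p+1)$ genus formula) is sound, and it buys something the paper's method cannot see: the honest geometric signature, including the genus of $X_0(p)$. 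One point you should make explicit, though: your signatures disagree with Table~\ref{tab:BredonPGamma0} whenever that genus is positive --- e.g.\ for $p=37$ you get $[2,2;2,2,3,3]$ while the table lists $[0,6;2,2,3,3]$. This is not an error on either side: a non-cocompact Fuchsian group of signature $[g,s;m_1,\dots,m_r]$ is abstractly the free product $F_{2g+s-1}\ast\ZZ_{m_1}\ast\dots\ast\ZZ_{m_r}$, so $[g,s;\dots]$ and its genus-zero normalisation $[0,2g+s;\dots]$ give isomorphic groups, and the table (like the paper's own deduction from $H^1\cong\ZZ^{2g+s-1}$ alone, which cannot separate $g$ from $s$) records the signature only up to this normalisation. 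Since the Bredon homology $H^\FIN_0=\ZZ^{1+\sum_j(m_j-1)}$ and $H^\FIN_1=\ZZ^{2g+s-1}$ is an invariant of the abstract group, your entries coincide with the table's in every case --- as your own checks for $p\equiv1,11\pmod{12}$ and the small primes confirm --- and everything downstream (the Mayer--Vietoris computation for $\PSLp$) depends only on these groups, so the discrepancy is harmless; a one-line remark converting $[g,2;\dots]$ to $[0,2g+2;\dots]$ would make your proof match the stated table verbatim.
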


\begin{table}[h]
    \centering
    \begin{tabular}{|c|c|c|c|}
    \hline
    $p$ & Signature of $\Gamma_0(p)$ & $H^\mathcal{F}_0(\underbar{E}\Gamma_0(p);R_\CC)$ & $H^\mathcal{F}_1(\underbar{E}\Gamma_0(p);R_\CC)$ \rule{0pt}{12pt} \\ \hline \hline
    $2$ & $[0,2;2]$ & $\ZZ^2$ & $\ZZ$ \\
    $3$ & $[0,2;3]$ & $\ZZ^3$ & $\ZZ$ \\
    $p\equiv 1\pmod{12}$ & $[0,\frac{1}{6}(p-7)+1;2,2,3,3]$ & $\ZZ^7$ & $\ZZ^{\frac{1}{6}(p-7)}$ \\
    $p\equiv 5\pmod{12}$ & $[0,\frac{1}{6}(p+1)+1;2,2]$ & $\ZZ^3$ & $\ZZ^{\frac{1}{6}(p+1)}$ \\
    $p\equiv 7\pmod{12}$ & $[0,\frac{1}{6}(p-1)+1;3,3]$ & $\ZZ^5$ & $\ZZ^{\frac{1}{6}(p-1)}$ \\
    $p\equiv 11\pmod{12}$ & $[0,\frac{1}{6}(p+7)+1;]$ & $\ZZ$ & $\ZZ^{\frac{1}{6}(p+7)}$ \\
    \hline
    \hline
    $\PSL_2(\ZZ)$ & $[0,1;2,3]$ & $\ZZ^4$ & $0$\\
    \hline
    \end{tabular}
    \caption{Fuchsian signatures and Bredon homology groups of ${\rm P}\Gamma_0(p)$.}
    \label{tab:BredonPGamma0}
\end{table}

\begin{proof}
Let $[0,s;m_1,\dots,m_r]$ be the signature of $\Gamma_0(p)$.  We will first compute the ordinary cohomology groups of $\Gamma_0(p)$, then using these we will deduce the signatures, finally the Bredon homology may then be read off of Theorem~\ref{thm.EquiKthryFuchsian}\ref{thm.EquiKthryCofinFuchsian}.  Our computation of the cohomology will be near identical to the computation in \cite[Section~2]{AN1998}.  The key difference is that the modules in \cite{AN1998} are for the lifts of $\Gamma_0(p)$ in $\SL_2(\RR)$ whereas we are always working with the projectivised groups (see the discussion after \cite[Proposition~2.2]{AN1998}).  Note that the fact the signature of $\PSL_2(\ZZ)$ is $[0,1;2,3]$ is well known.

Let $G=\PSL_2(p)$ and let $Q$ be the subgroup of equivalence classes of matrices with lower left hand entry equal to zero.  Clearly, $Q\cong\ZZ_p\rtimes\ZZ_{\frac{1}{2}(p-1)}$ (unless $p=2,3$ where $Q\cong\ZZ_p$).  Each $\Gamma_0(p)$ fits into a short exact sequence with normal subgroup a congruence subgroup $\Gamma(p)$ isomorphic to a free group and quotient $Q$.

Now, recall \cite[Example~4.2(c)]{SerreTrees} that $\PSL_2(\ZZ)$ acts on a tree $\calt$ with fundamental domain an edge.  Moreover, $G$ acts on $\calt/\Gamma(p)$.  The stabiliser subgroups for both actions are $\ZZ_2$ and $\ZZ_3$ for the vertices and trivial for the edges.  It follows $\Gamma_0(p)$ acts freely on $EQ\times \calt$ and so $EQ\times_Q\calt/\Gamma_0(p)$ is a model for $B\Gamma_0(p)$.

Let $C^\ast$ denote the cellular cochains on the $B$-CW complex $\calt/\Gamma(p)$, then by \cite[Section~2]{AN1998} we have the following isomorphisms of $B$-modules
\[C^0:=\ZZ[G/\ZZ_2]\ |_B\oplus \ZZ[G/\ZZ_3]\ |_B\quad\text{and}\quad C^1:=\ZZ[G]\ |_B. \]
As in \cite[Section~2]{AN1998} we have a long exact sequence
\[\cdots\rightarrow H^n(\Gamma_0(p);\ZZ)\rightarrow H^n(B;C^0)\rightarrow H^n(B;C^1)\rightarrow H^{n+1}(\Gamma_0(p);\ZZ)\rightarrow\cdots \]
Since $C^0$ is a permutation module, $H^1(B;C^0)=0$.  Calculating ranks yields that $H^1(\Gamma_0(p);\ZZ)=\ZZ^{N(p)}$ where
\[N(p)=1,\quad 1,\quad \frac{1}{6}(p-7),\quad \frac{1}{6}(p+1),\quad \frac{1}{6}(p-1),\quad \frac{1}{6}(p+7), \]
ordered as in Table~\ref{tab:BredonPGamma0}.  In \cite[Theorem~1.4(b)]{Hughes2019cohomology} it is shown that a Fuchsian group $\Gamma$ of signature $[0,s;m_1,\dots,m_r]$ has $H^1(\Gamma;\ZZ)\cong \ZZ^{s-1}$.  In particular, we deduce the signature of $\Gamma_0(p)$ must have the form $[0,N(p)+1;m_1,\dots,m_r]$.

Now, $C^1$ is a free $B$-module and so we have an isomorphism $H^{2n}(\Gamma_0(p);\ZZ)\cong H^2(B;C^0)$ for all $n\geq1$.  As in \cite[Proposition~2.3]{AN1998} we obtain that
\[ H^{2n}(\Gamma_0(p);\ZZ)\cong \ZZ_2,\quad \ZZ_3,\quad \ZZ_6^2,\quad \ZZ_2^2,\quad \ZZ_3^2,\quad 0, \] 
ordered as in Table~\ref{tab:BredonPGamma0}.  The result now follows from the following three facts.  Firstly, for a Fuchsian group $\Gamma$ of signature $[0,s;m_1,\dots,m_r]$, each $m_j$ corresponds to a conjugacy class of maximal finite cyclic subgroups $\ZZ_{m_j}$.  Secondly, by the proof of \cite[Theorem~1.4(b)]{Hughes2019cohomology}, each maximal conjugacy class of finite cyclic subgroups $\ZZ_{m_j}$ contributes a $\widetilde{H}^\ast(\ZZ_{m_j};\ZZ)$ summand to $\widetilde{H}^\ast(\Gamma;\ZZ)$.  Thirdly, $\PSL_2(\ZZ)$ and hence $\Gamma_0(p)$ has no elements of order $6$.
\end{proof}

\begin{remark}
Let $\widetilde{\Gamma_0}(p)$ denote the lift of $\Gamma_0(p)$ in $\SL_2(\RR)$.  An alternative computation of $H^\ast(\Gamma_0(p);\ZZ)$ can be achieved by back solving the Lyndon-Hochschild-Serre spectral sequence (see for instance \cite[Chapter VII.6]{BrownBook}) for the group extension $\ZZ_2\rightarrowtail\widetilde{\Gamma_0}(p)\twoheadrightarrow\Gamma_0(p)$ which takes the form
\[E_2^{\ast,\ast}=H^\ast(\Gamma_0(p);H^\ast(\ZZ_2;\ZZ))\Rightarrow H^{\ast}(\widetilde{\Gamma_0}(p);\ZZ) \]
using the cohomology calculations for $\widetilde{\Gamma_0}(p)$ in \cite{AN1998}.
\end{remark}

We shall also record the conjugacy classes of finite order elements of $\Gamma_0(p)$ and $\PSL_2(\ZZ[\frac{1}{p}])$.  Note that the only conjugacy classes of finite subgroups of $\PSL_2(\ZZ)$ are one class of groups isomorphic to $\ZZ_2$ and one to $\ZZ_3$ since $\PSL_2(\ZZ)\cong\ZZ_2\ast\ZZ_3$.  The conjugacy classes of finite subgroups of $\Gamma_0(p)$ can be read off of the signature, there is exactly one of order $m_j$ for each $j=1,\dots,r$.

\begin{lemma}\label{lem.ccl.psl}
The number of conjugacy classes of finite order elements in $\PSL_2(\ZZ[\frac{1}{p}])$ are those given in Table~\ref{tab:FiniteSubgroupsPSL}.
\end{lemma}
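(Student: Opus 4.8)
The plan is to exploit the amalgam decomposition $\PSLp=\PSL_2(\ZZ)\ast_{\Gamma_0(p)}\PSL_2(\ZZ)$ recorded above, together with Bass--Serre theory. First I would observe that in an amalgamated free product every finite-order element stabilises a vertex of the Bass--Serre tree, and is therefore conjugate into one of the two vertex groups $\PSL_2(\ZZ)$; in particular the only finite orders occurring are $2$ and $3$. Inside a single copy of $\PSL_2(\ZZ)\cong\ZZ_2\ast\ZZ_3$ the description of torsion in free products gives exactly one conjugacy class of elements of order $2$ and two classes of order $3$, namely a generator $t$ of the $\ZZ_3$-factor and its inverse $t^{-1}$, which are non-conjugate since $\ZZ_3$ is abelian.

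The next step is to reduce $\Gamma$-conjugacy of vertex torsion to conjugacy inside the vertex and edge groups. If $a$ and $a'$ lie in vertex groups with $gag^{-1}=a'$, then $a$ fixes both the base vertex and its $g^{-1}$-translate; as the fixed-point set of a tree isometry is convex, $a$ fixes the entire geodesic between them and hence lies in every edge stabiliser along it, each a conjugate of $\Gamma_0(p)$. Decomposing this geodesic edge by edge shows that the set of conjugacy classes of finite-order elements of $\Gamma$ is the coequalizer of $\mathrm{Cl}(\PSL_2(\ZZ))\sqcup\mathrm{Cl}(\PSL_2(\ZZ))$ (two copies of the finite-order classes of $\PSL_2(\ZZ)$) by the relation that identifies, for each torsion class of $\Gamma_0(p)$, its images under the two edge embeddings. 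Equivalently, I would count the connected components of the bipartite graph whose vertices are the torsion classes of the two copies of $\PSL_2(\ZZ)$ and whose edges are the torsion classes of $\Gamma_0(p)$.

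The crux, and the step I expect to be the main obstacle, is to pin down how these edges attach; in particular, when $p\equiv1,7\pmod{12}$ there are two $\ZZ_3$-factors and one must decide whether they glue the four order-$3$ vertex classes into a single class or into two. I would resolve this using the rotation angle of an elliptic element, which is a complete invariant of $\PSL_2(\RR)$-conjugacy and is therefore preserved by every conjugation inside $\Gamma\leq\PSL_2(\RR)$. All order-$2$ elements have angle $\pi$ and so form one $\PSL_2(\RR)$-class, whereas the two order-$3$ classes $[t]$ and $[t^{-1}]$ are separated by the angles $\pm 2\pi/3$. Since the second edge embedding is conjugation by $\mathrm{diag}(p,1)\in\GL_2^+(\RR)$, which acts on $\HH$ in an orientation-preserving, hence angle-preserving, way, each torsion class of $\Gamma_0(p)$ maps to the \emph{same}-angle class in both copies of $\PSL_2(\ZZ)$. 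No edge therefore crosses between the two angle types, and the gluing is forced: the two order-$2$ vertex classes merge to one exactly when $\Gamma_0(p)$ has an element of order $2$, while the four order-$3$ classes merge to exactly two (one of each angle) exactly when $\Gamma_0(p)$ has an element of order $3$, each $\ZZ_3$-factor contributing one element of each angle.

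Finally I would read the number $k_2$ of $\ZZ_2$-factors and the number $k_3$ of $\ZZ_3$-factors in the signature of $\Gamma_0(p)$ off Table~\ref{tab:BredonPGamma0} for each residue of $p$ modulo $12$: the number of order-$2$ classes is $1$ if $k_2\geq1$ and $2$ otherwise, the number of order-$3$ classes is $2$ if $k_3\geq1$ and $4$ otherwise, and to these one adds the single class of the identity. Assembling these three contributions gives the entries of Table~\ref{tab:FiniteSubgroupsPSL}. As a consistency check, the resulting totals coincide with the ranks of $H_0^{\calf}(\ulEG;\repc)=\repc(\Gamma)$ computed from the Mayer--Vietoris sequence of the amalgam, since the rank of the complex representation ring equals the number of conjugacy classes of finite-order elements.
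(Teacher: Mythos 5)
Your proposal is correct, and its skeleton --- the amalgam $\PSLp=\PSL_2(\ZZ)\ast_{\Gamma_0(p)}\PSL_2(\ZZ)$ with fusion of torsion classes mediated by $\Gamma_0(p)$ --- is exactly the paper's. The difference is in how much is actually proved: the paper's argument consists solely of the fusion direction, observing that when $\Gamma_0(p)$ contains an element of order $2$ (resp.\ $3$) one can conjugate a torsion element of the first copy of $\PSL_2(\ZZ)$ into $\Gamma_0(p)$ and then into the second copy; it leaves implicit both that no \emph{further} identifications occur (in particular that the two classes stay distinct when $\Gamma_0(p)$ is torsion-free of the relevant order) and that the two order-$3$ classes never merge with each other. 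You supply precisely these missing halves with tools that do not appear in the paper: convexity of fixed-point subtrees in the Bass--Serre tree gives the upper bound on fusion (torsion classes of the amalgam are the components of your bipartite graph, with edges the torsion classes of $\Gamma_0(p)$), and the rotation angle --- a complete $\PSL_2(\RR)$-conjugacy invariant of elliptic elements, correctly noted to be preserved by the second edge embedding since that embedding is conjugation by $\mathrm{diag}(p,1)$, an orientation-preserving isometry of $\HH$ --- separates the angle-$\pm 2\pi/3$ classes and forces each edge of the graph to attach same-angle classes, which is the crux in the cases $p\equiv 1,7\pmod{12}$ (note the trace would not suffice here, as the two order-$3$ classes share a trace in $\PSL_2$). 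Your reading of the multiplicities $k_2,k_3$ from the signatures in Table~\ref{tab:BredonPGamma0} then reproduces Table~\ref{tab:FiniteSubgroupsPSL} correctly in all six cases. One small caveat: your closing consistency check against the rank of $H_0^{\calf}(\ulEG;\repc)$ is circular relative to the paper, since the paper computes that Bredon group via the colimit description \emph{using} this very lemma; it is harmless as a sanity check but cannot serve as independent confirmation.
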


\begin{table}[h]
    \centering
    \begin{tabular}{|c|c|c|c|c|c|c|}
    \hline
    & $p=2$ & $p=3$ & $ p\equiv 1\pmod{12}$& $ p\equiv 5\pmod{12}$& $ p\equiv 7\pmod{12}$ & $ p\equiv 11\pmod{12}$\\
    \hline \hline
    Identity & $1$ & $1$ & $1$ & $1$ & $1$ & $1$\\
    Order $2$ & 1 & 2 & 1 & 1 & 2 & 2 \\
    Order $3$ & 4 & 2 & 2 & 4 & 2 & 4  \\ \hline \hline
    Total & $6$ & 5 & 4 & 6 & 5 & 7 \\
    \hline
    \end{tabular}
    \caption{Number of conjugacy classes of finite order elements of $\PSL_2(\ZZ[\frac{1}{p}])$.}
    \label{tab:FiniteSubgroupsPSL}
\end{table}

\begin{proof}
The result follows from the following observation:  If there is a conjugacy of elements of order $2$ (resp. $3$) in $\Gamma_0(p)$, then each of class of elements of order $2$ (resp. $3$) in $\PSL_2(\ZZ)$ fuses in $\PSL_2(\ZZ[\frac{1}{p}])$.  To see this, consider an element in the first copy of $\PSL_2(\ZZ)$, conjugate it to an element in $
\Gamma_0(p)$, and then conjugate it to an element in the other copy of $\PSL_2(\ZZ)$.
\end{proof}

\begin{lemma}\label{lem.BC}
Both $\SL_2(\ZZ[\frac{1}{p}])$ and $\PSL_2(\ZZ[\frac{1}{p}])$ satisfy the Baum-Connes Conjecture.
\end{lemma}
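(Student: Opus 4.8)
The plan is to prove the Baum-Connes conjecture for $\SLp$ and $\PSLp$ by exploiting their amalgamated free product decompositions together with the strong closure properties of the class of groups satisfying Baum-Connes. The key structural input has already been recorded: we have $\PSLp \cong \PSL_2(\ZZ)\ast_{\Gamma_0(p)}\PSL_2(\ZZ)$, and more generally each of the groups in sight acts on a Bass-Serre tree with finite-or-lattice-type stabilisers. The guiding principle is that Baum-Connes is known to hold for lattices in $\SL_2(\RR)$ (indeed for all a-T-menable groups, by the Higson-Kasparov theorem, since $\PSL_2(\ZZ)$, $\Gamma_0(p)$, and all Fuchsian groups have the Haagerup property) and that the property is inherited under the relevant group-theoretic constructions.

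First I would dispose of $\PSLp$. Since $\PSL_2(\ZZ)$ is virtually free it is a-T-menable and hence satisfies Baum-Connes by \cite{HigsonKasparov1997}; the same applies to $\Gamma_0(p)$, which by the preceding lemma is a finitely generated Fuchsian group and thus a discrete subgroup of $\PSL_2(\RR)$, so it too is a-T-menable. To pass from the vertex and edge groups to the amalgam I would invoke the permanence of the Baum-Connes conjecture under amalgamated free products: a group acting on a tree satisfies Baum-Connes provided all of its vertex and edge stabilisers do. This is a standard consequence of the work of Oyono-Oyono on the conjecture for groups acting on trees, combined with the Mayer-Vietoris or pushout behaviour of the assembly map along the tree decomposition. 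Applying this to the splitting $\PSL_2(\ZZ)\ast_{\Gamma_0(p)}\PSL_2(\ZZ)$ yields Baum-Connes for $\PSLp$.

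Next I would handle $\SLp$. The cleanest route is to observe that $\SLp$ is a central extension of $\PSLp$ by $\ZZ_2$, that is, there is a short exact sequence $1\to\ZZ_2\to\SLp\to\PSLp\to1$. Since $\ZZ_2$ is finite and $\PSLp$ satisfies Baum-Connes, the conjecture is inherited by the extension: permanence of Baum-Connes under extensions with finite (or more generally a-T-menable) kernel gives the result for $\SLp$. Alternatively, and perhaps more in keeping with the tree-theoretic approach of the section, one can lift the amalgam decomposition to $\SL_2(\ZZ)\ast_{\widetilde{\Gamma_0}(p)}\SL_2(\ZZ)$ and argue exactly as before, since $\SL_2(\ZZ)$ and the lift $\widetilde{\Gamma_0}(p)$ are again a-T-menable (being central $\ZZ_2$-extensions of a-T-menable groups).

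The main obstacle is locating and correctly invoking the right permanence result for Baum-Connes under amalgamation over an infinite subgroup, since one must be careful that the conjecture is stated with the correct coefficients and that the passage through the tree genuinely reduces to the stabilisers rather than merely to the vertex groups in isolation. In practice this is handled by Oyono-Oyono's theorem that the class of groups satisfying Baum-Connes with arbitrary coefficients is closed under the fundamental-group-of-a-graph-of-groups operation whenever the vertex groups lie in the class; the a-T-menable vertex and edge groups satisfy this stronger coefficient version by Higson-Kasparov, so the hypotheses are met. Once this permanence statement is cited correctly, both assertions follow with only routine bookkeeping, and combined with Theorem~\ref{thm.equiK.PSL1overp} this also identifies $K^\Gamma_\ast(\underbar{E}\Gamma)$ with the topological $K$-theory of the reduced $C^\ast$-algebra.
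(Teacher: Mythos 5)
Your proposal is correct and takes essentially the same route as the paper: both use the decomposition $\PSLp=\PSL_2(\ZZ)\ast_{\Gamma_0(p)}\PSL_2(\ZZ)$, the resulting action on the Bass--Serre tree, and the permanence of Baum--Connes for groups acting on trees with stabilisers satisfying the conjecture (cited in the paper as \cite[Theorem~5.13]{MislinNotes}, which is the Oyono-Oyono-type result you describe), with $\SLp$ handled by the identical argument on the lifted amalgam, exactly as in your alternative route. The only cosmetic difference is how the vertex and edge groups are certified: you invoke a-T-menability and \cite{HigsonKasparov1997} directly, whereas the paper observes the stabilisers are graphs of finite groups with $\mathrm{cd}_\QQ=1$ and cites \cite[Corollary~5.14]{MislinNotes}.
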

\begin{proof}
Since $\PSL_2(\ZZ[\frac{1}{p}])=\PSL_2(\ZZ)\ast_{\Gamma_0(p)}\PSL_2(\ZZ)$, the Bass-Serre tree of the amalgamation is a locally-finite $1$-dimensional contractible $\PSL_2(\ZZ[\frac{1}{p}])$-CW complex.  Moreover, each of the stabilisers $\Gamma_c$ have ${\rm cd}_\QQ(\Gamma_c)=1$, being a graph of finite groups.  Now, we apply \cite[Corollary~5.14]{MislinNotes} to see that the stabilisers satisfy Baum-Connes and \cite[Theorem~5.13]{MislinNotes} to see that $\PSL_2(\ZZ[\frac{1}{p}])$ does.  The proof is identical for $\SL_2(\ZZ[\frac{1}{p}])$.
\end{proof}

\subsection{Computations}
There is a long exact Mayer-Vietoris sequence for computing the Bredon homology of an amalgamated free product.

\begin{thm}\emph{\cite[Corollary~3.32]{MislinNotes}}
Let $\Gamma=H\ast_LK$ and let $M$ be a Bredon module.  There is a long exact Mayer-Vietoris sequence:
\[
\begin{tikzcd}
\cdots \arrow[r] & H_n^{\mathcal{FIN}}(L;M) \arrow[r]     & H_n^{\mathcal{FIN}}(H;M)\oplus H_n^{\mathcal{FIN}}(K;M) \arrow[d] \\
\cdots           & H_{n-1}^{\mathcal{FIN}}(L;M) \arrow[l] & H_n^{\mathcal{FIN}}(\underbar{\emph{E}}\Gamma;M) \arrow[l]                           
\end{tikzcd}
\]
\end{thm}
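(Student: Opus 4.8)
The plan is to realise $\underline{E}\Gamma$ as a homotopy pushout built from the Bass--Serre tree of the splitting and then exploit the fact that Bredon homology is a $\Gamma$-equivariant homology theory. First I would use that $\Gamma=H\ast_L K$ acts without inversions on its Bass--Serre tree $T$, a contractible $1$-dimensional $\Gamma$-CW complex with one orbit of edges having stabiliser a conjugate of $L$ and two orbits of vertices with stabilisers conjugate to $H$ and $K$; the two endpoints of the fundamental edge realise the inclusions $L\hookrightarrow H$ and $L\hookrightarrow K$. Since $L$, $H$, $K$ are typically infinite this action is not proper, so I would thicken $T$ by replacing each orbit of cells $\Gamma/G_\sigma$ with $\Gamma\times_{G_\sigma}\underline{E}G_\sigma$, producing the $\Gamma$-space
\[ X=\hocolim\bigl(\,\Gamma\times_H\underline{E}H\longleftarrow \Gamma\times_L\underline{E}L\longrightarrow \Gamma\times_K\underline{E}K\,\bigr), \]
the double mapping cylinder of the two maps induced by $L\hookrightarrow H$ and $L\hookrightarrow K$.

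The first substantive step is to verify that $X$ is a model for $\underline{E}\Gamma$. All cell stabilisers of $X$ are finite by construction, so it remains to show that $X^F$ is contractible for every finite $F\leq\Gamma$. Here I would use that a finite group acting without inversions on a tree fixes a vertex, so $F$ is subconjugate to $H$ or to $K$ and the fixed-point tree $T^F$ is a nonempty, hence contractible, subtree. A fixed-point analysis of the double mapping cylinder then expresses $X^F$ as glued over $T^F$ out of copies of $(\underline{E}H)^{F'}$, $(\underline{E}K)^{F'}$ and $(\underline{E}L)^{F'}$ for the relevant conjugates, each weakly contractible by definition of the classifying spaces. I expect this to be the main obstacle: one must check that the $F$-fixed diagram is itself a homotopy pushout of contractible spaces indexed by $T^F$ and so deformation retracts to the contractible space $T^F$, ruling out pathologies where $F$ fixes an edge but the $(\Gamma\times_L\underline{E}L)^F$ term fails to interpolate between the vertex terms.

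With $X\simeq\underline{E}\Gamma$ established, I would apply $H_*^{\FIN}(-;M)$, which as an equivariant homology theory converts the homotopy pushout into a Mayer--Vietoris long exact sequence
\[ \cdots\to H_n^{\FIN}(\Gamma\times_L\underline{E}L;M)\to H_n^{\FIN}(\Gamma\times_H\underline{E}H;M)\oplus H_n^{\FIN}(\Gamma\times_K\underline{E}K;M)\to H_n^{\FIN}(\underline{E}\Gamma;M)\to\cdots \]
with the first map the difference of the two restriction-induced maps. Finally I would identify the induced terms using the induction isomorphism $H_n^{\FIN}(\Gamma\times_H Y;M)\cong H_n^{\FIN}(Y;\mathrm{res}\,M)$, which is transparent at the chain level: the cells of $\Gamma\times_H Y$ are in $\Gamma$-equivariant bijection with the $H$-cells of $Y$, and since $M(\Gamma/H')=M(H/H')$ for every $H'\leq H$ the two Bredon chain complexes literally coincide. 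Writing $H_n^{\FIN}(H;M)$ for $H_n^{\FIN}(\underline{E}H;\mathrm{res}\,M)$, and similarly for $K$ and $L$, then yields exactly the stated sequence, with the connecting maps induced by the inclusions $L\hookrightarrow H$ and $L\hookrightarrow K$. (A purely algebraic variant avoids the hocolim language by assembling a compatible $\Gamma$-CW model whose Bredon chains split as a short exact sequence $0\to C_*^{\FIN}(L;M)\to C_*^{\FIN}(H;M)\oplus C_*^{\FIN}(K;M)\to C_*^{\FIN}(\underline{E}\Gamma;M)\to 0$, but this still rests on the same fixed-point verification.)
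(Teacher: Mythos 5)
Your proposal is correct and follows essentially the same route as the paper's source: the paper gives no proof of its own, citing Mislin--Valette, where the sequence is obtained exactly as you describe, from the double-mapping-cylinder model of $\underbar{E}\Gamma$ built over the Bass--Serre tree of the splitting together with the induction isomorphism $H_n^{\mathcal{FIN}}(\Gamma\times_H Y;M)\cong H_n^{\mathcal{FIN}}(Y;\mathrm{res}\,M)$. Your fixed-point verification (a finite subgroup $F$ fixes a nonempty subtree $T^F$, over which the $F$-fixed sets of the induced pieces assemble into a contractible homotopy colimit) is precisely the standard argument, so there is no gap.
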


We are now ready to compute the $K$-theory of $\PSLp$.

\begin{thm}[Theorem~\ref{thm.equiK.PSL1overp}]\label{thm.equiK.PSL1overp.body}
Let $p$ be a prime and $\Gamma=\PSL_2(\ZZ[\frac{1}{p}])$, then $K_n^{\Gamma}(\underbar{{\emph E}}\Gamma)$ is a free abelian group with rank as given in Table~\ref{tab:PSL1pEquiK}.  Moreover, since the Baum-Connes Conjecture holds for $\Gamma$ we have $K^\Gamma_\ast( \underbar{{\emph E}}\Gamma)\cong K^\textrm{top}_\ast(C^\ast_r(\Gamma))$.
\end{thm}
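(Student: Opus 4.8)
The plan is to compute $K_n^\Gamma(\underbar{E}\Gamma)$ for $\Gamma = \PSL_2(\ZZ[\frac{1}{p}])$ by feeding the amalgamated free product decomposition $\Gamma = \PSL_2(\ZZ) \ast_{\Gamma_0(p)} \PSL_2(\ZZ)$ into the Bredon-homology Mayer-Vietoris sequence quoted above, and then invoking the collapsed Atiyah-Hirzebruch spectral sequence to pass from Bredon homology to $K^\Gamma$-homology. All the Bredon homology inputs have already been assembled: Table~\ref{tab:BredonPGamma0} records $H^{\mathcal{F}}_\ast(\underbar{E}\Gamma_0(p); \repc)$ and $H^{\mathcal{F}}_\ast(\underbar{E}\PSL_2(\ZZ); \repc)$ for every congruence class of $p$. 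So the first step is simply to write down the Mayer-Vietoris sequence with $H = K = \PSL_2(\ZZ)$ and $L = \Gamma_0(p)$, coefficients in $\repc$.

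First I would observe that the sequence is short: since all three groups are Fuchsian, their Bredon homology vanishes above degree $1$, so the Mayer-Vietoris sequence reduces to a six-term exact sequence
\[
0 \to H_1^{\mathcal{F}}(L) \to H_1^{\mathcal{F}}(H)\oplus H_1^{\mathcal{F}}(K) \to H_1^{\mathcal{F}}(\underbar{E}\Gamma) \to H_0^{\mathcal{F}}(L) \to H_0^{\mathcal{F}}(H)\oplus H_0^{\mathcal{F}}(K) \to H_0^{\mathcal{F}}(\underbar{E}\Gamma) \to 0,
\]
where I suppress the $\repc$ coefficient. Because $\PSL_2(\ZZ)$ has $H_1^{\mathcal{F}} = 0$, the leftmost map forces $H_1^{\mathcal{F}}(L)$ to inject into $H_1^{\mathcal{F}}(\underbar{E}\Gamma)$, and the middle portion becomes the connecting map $\delta\colon H_1^{\mathcal{F}}(\underbar{E}\Gamma) \to H_0^{\mathcal{F}}(L)$ together with $\iota\colon H_0^{\mathcal{F}}(L) \to H_0^{\mathcal{F}}(H)\oplus H_0^{\mathcal{F}}(K)$. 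All the groups involved are free abelian of known rank, so $H_0^{\mathcal{F}}(\underbar{E}\Gamma) = \mathrm{coker}(\iota)$ and $H_1^{\mathcal{F}}(\underbar{E}\Gamma)$ sits in an extension of $\ker(\iota)$ by $H_1^{\mathcal{F}}(\PSL_2(\ZZ))^2 \oplus \mathrm{coker}$ of the first map; since everything is free abelian the extensions split and I only need to track ranks.

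The heart of the computation is the rank of $\iota\colon H_0^{\mathcal{F}}(\Gamma_0(p);\repc) \to H_0^{\mathcal{F}}(\PSL_2(\ZZ);\repc)^2$, equivalently its kernel, which is exactly the degree-$0$ connecting data. Here I would use the representation-theoretic interpretation of $H_0^{\mathcal{F}}(-;\repc)$ as the free abelian group on conjugacy classes of finite cyclic subgroups (one generator per irreducible representation of each such subgroup), together with the fusion information encoded in Lemma~\ref{lem.ccl.psl} and Table~\ref{tab:FiniteSubgroupsPSL}. The two embeddings of $\Gamma_0(p)$ into the two copies of $\PSL_2(\ZZ)$ send a class of order-$2$ (resp. order-$3$) elements to the unique order-$2$ (resp. order-$3$) class of $\PSL_2(\ZZ)$, and the fusion controls which generators collapse. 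In fact $H_0^{\mathcal{F}}(\underbar{E}\Gamma;\repc)$ should be identifiable directly with the colimit $\repc(\Gamma)$, whose rank equals the total number of conjugacy classes of finite-order elements of $\Gamma$ read off from the bottom row of Table~\ref{tab:FiniteSubgroupsPSL}; this gives an independent cross-check on $H_0$ and pins down the rank of $\iota$.

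The main obstacle will be bookkeeping the connecting map rather than any conceptual difficulty: I must correctly count, for each residue class of $p \pmod{12}$, how the order-$2$ and order-$3$ representation generators of $\Gamma_0(p)$ map under the two distinct inclusions, and verify that the resulting rank of $\mathrm{coker}(\iota)$ reproduces the $n=0$ entries and that $\ker(\iota)$ plus the surviving $H_1$ contributions reproduce the $n=1$ entries of Table~\ref{tab:PSL1pEquiK}. Once the ranks of $H_0^{\mathcal{F}}$ and $H_1^{\mathcal{F}}$ are established, the final step is routine: the Atiyah-Hirzebruch spectral sequence $E^2_{p,q} = H_p^{\mathcal{FIN}}(\underbar{E}\Gamma; K_q^\Gamma)$ is concentrated in rows $q$ even (where $K_q^\Gamma(\Gamma/H) = \repc(H)$) and columns $p \in \{0,1\}$, so by $2$-fold Bott periodicity it collapses with no possible differentials, yielding $K_0^\Gamma(\underbar{E}\Gamma) = H_0^{\mathcal{F}}$ and $K_1^\Gamma(\underbar{E}\Gamma) = H_1^{\mathcal{F}}$, both free abelian. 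The identification with $K^{\mathrm{top}}_\ast(C^\ast_r(\Gamma))$ is then immediate from Lemma~\ref{lem.BC}.
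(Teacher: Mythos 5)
There is a genuine gap, and it sits exactly where you truncate the Mayer--Vietoris sequence. The vanishing of the Bredon homology of $\PSL_2(\ZZ)$ and $\Gamma_0(p)$ above degree $1$ does \emph{not} let the sequence begin with $0\to H_1^{\mathcal{F}}(\Gamma_0(p);\repc)$: the term to its left is $H_2^{\mathcal{F}}(\underbar{E}\Gamma;\repc)$, which does not vanish here (the amalgam is not a one-dimensional situation for $\Gamma$ itself). Since $H_1^{\mathcal{F}}(\PSL_2(\ZZ);\repc)=0$, exactness actually yields
\[0\to H_2^{\mathcal{F}}(\underbar{E}\Gamma;\repc)\to H_1^{\mathcal{F}}(\Gamma_0(p);\repc)\to 0, \qquad 0\to H_1^{\mathcal{F}}(\underbar{E}\Gamma;\repc)\to H_0^{\mathcal{F}}(\Gamma_0(p);\repc)\xrightarrow{\ \iota\ } H_0^{\mathcal{F}}(\PSL_2(\ZZ);\repc)^2\to H_0^{\mathcal{F}}(\underbar{E}\Gamma;\repc)\to 0,\]
so $H_2^{\mathcal{F}}(\underbar{E}\Gamma;\repc)\cong H_1^{\mathcal{F}}(\Gamma_0(p);\repc)$ and $H_1^{\mathcal{F}}(\underbar{E}\Gamma;\repc)\cong\ker\iota$. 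Your six-term sequence is internally inconsistent: with $H_1^{\mathcal{F}}(\PSL_2(\ZZ);\repc)^2=0$ it would force $H_1^{\mathcal{F}}(\Gamma_0(p);\repc)=0$, contradicting Table~\ref{tab:BredonPGamma0} (rank $N(p)>0$ for every $p$); and your claim that $H_1^{\mathcal{F}}(\Gamma_0(p);\repc)$ injects into $H_1^{\mathcal{F}}(\underbar{E}\Gamma;\repc)$ follows from exactness in neither version. The same omission recurs at the final step: since the Bredon homology lives in degrees $0,1,2$, the collapsed Atiyah--Hirzebruch spectral sequence gives $K_0^\Gamma(\underbar{E}\Gamma)=H_0^{\mathcal{F}}\oplus H_2^{\mathcal{F}}$, not just $H_0^{\mathcal{F}}$.

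This is not a cosmetic slip, because it moves $H_1^{\mathcal{F}}(\Gamma_0(p);\repc)$ from degree $2$ (feeding $K_0$) to degree $1$ (feeding $K_1$) and so redistributes the ranks between $n=0$ and $n=1$. Concretely, for $p\equiv11\pmod{12}$ your bookkeeping gives $\ker\iota=0$ and hence $K_1$ of rank $\tfrac{1}{6}(p+7)$ with $K_0$ of rank $7$, whereas Table~\ref{tab:PSL1pEquiK} (and the correct computation) give $K_1=0$ and $K_0$ of rank $7+\tfrac{1}{6}(p+7)$; similarly for $p=2$ your $\mathrm{coker}(\iota)=\ZZ^6$ misses the extra $\ZZ=H_2^{\mathcal{F}}\cong H_1^{\mathcal{F}}(\Gamma_0(2);\repc)$ needed for the entry $7$. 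Your proposed cross-check against the colimit $\repc(\Gamma)$ would in fact have exposed this, since the colimit computes $H_0^{\mathcal{F}}$ alone and agrees with $\mathrm{coker}(\iota)$, not with the $n=0$ column of Table~\ref{tab:PSL1pEquiK}. Everything else in your plan --- the Mayer--Vietoris setup, the colimit computation of $H_0^{\mathcal{F}}$ via the fusion data of Lemma~\ref{lem.ccl.psl}, and invoking Lemma~\ref{lem.BC} for the Baum--Connes identification --- coincides with the paper's proof once the degree-$2$ term is restored.
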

\begin{proof}
There are $6$ cases to consider, the two cases when $p=2,3$ and the four cases given by $p\equiv1,5,7,11\pmod{12}$.  Let $\Gamma=\PSL_2(\ZZ[\frac{1}{p}])$ and $\mathcal{F}=\FIN$. In each case we have the following long exact Mayer-Vietoris sequence
\[\begin{tikzcd} 
0 \arrow[r] & H_2^{\mathcal{F}}(\Gamma;\repc) \arrow[r] &  H_1^{\mathcal{F}}(\Gamma_0(p);\repc) \arrow[r] & \left(H_1^{\mathcal{F}}(\PSL_2(\ZZ);\repc)\right)^2 \arrow[d] \\
   & \left(H_0^{\mathcal{F}}(\PSL_2(\ZZ);\repc)\right)^2 \arrow[d] & \arrow[l] H_0^{\mathcal{F}}(\Gamma_0(p);\repc)  &  \arrow[l]  H_1^{\mathcal{F}}(\Gamma;\repc)  \\
 & H_0^{\mathcal{F}}(\Gamma;\repc) \arrow[r] & 0.                                      &                       \end{tikzcd}\]

We have computed the Bredon homology groups of $\PSL_2(\ZZ)$ and $\Gamma_0(p)$ in Table~\ref{tab:BredonPGamma0}.  Thus, we can separate the above sequence into two sequences.  Indeed, $H_1^{\mathcal{F}}(\PSL_2(\ZZ);\repc))=0$, so it follows that $H_2^{\mathcal{F}}(\Gamma;\repc)\cong H_1^{\mathcal{F}}(\Gamma_0(p);\repc)$.  The other sequence is then given by the remaining terms.

We will treat the case $p=2$, the other cases proceed identically.  We have $H_2^{\mathcal{F}}(\Gamma;\repc)=\ZZ$ and an exact sequence
\[\begin{tikzcd}  
0 \arrow[r] & H_1^{\mathcal{F}}(\Gamma;\repc) \arrow[r] & \ZZ^2 \arrow[r] & \ZZ^8 \arrow[r] & H_0^{\mathcal{F}}(\Gamma;\repc) \arrow[r] & 0.
\end{tikzcd}\]

We now compute the colimit $H_0^{\mathcal{F}}(\Gamma;\repc)=\textrm{colim}_{\Gamma/H\in\orbf}\repc(H)$.  Since we have a complete description of the conjugacy classes of finite subgroups of $\Gamma$ and the only inclusions are given by $\{1\}\hookrightarrow\ZZ_2$ and $\{1\}\hookrightarrow\ZZ_3$, it follows that $H_0^{\mathcal{F}}(\Gamma;\repc)=\ZZ^6$.  Moreover, for the sequence to be exact, it follows the map $\ZZ^2\rightarrow\ZZ^8$ must be an isomorphism onto the kernel of the first map.  In particular $H_1^{\mathcal{F}}(\Gamma;\repc)=0$.  

The result now follows from the collapsed Atiyah-Hirzebruch spectral sequence given in \cite[Theorem~5.27]{MislinNotes} and we obtain $K^\Gamma_0(\underbar{E}\Gamma)=H_0^{\mathcal{F}}(\Gamma;\repc)\oplus H_2^{\mathcal{F}}(\Gamma;\repc)$ and $K^\Gamma_1(\underbar{E}\Gamma)=H_1^{\mathcal{F}}(\Gamma;\repc)$.  We record the Bredon homology groups for the remaining cases in Table~\ref{tab:PSL1pBredon}, the reader can easily verify these.  Note that they are always torsion-free and so are completely determined by their $\ZZ$-rank.
\end{proof}

\begin{table}[h]
    \centering
\begin{tabular}{|c|c|c|c|c|c|c|}
    \hline
    & $p=2$ & $p=3$ & $ p\equiv 1\pmod{12}$& $ p\equiv 5\pmod{12}$& $ p\equiv 7\pmod{12}$ & $ p\equiv 11\pmod{12}$\\
    \hline \hline
    $n=0$ & $6$ & $5$ & $4$ & $6$ & $5$ & $7$\\
    \hline
    $n=1$ & $0$ & $0$ & $3$ & $1$ & $2$ & $0$\\
    \hline
    $n=2$ & $1$ & $1$ & $\frac{1}{6}(p-7)$ & $\frac{1}{6}(p+1)$ & $\frac{1}{6}(p-1)$ & $\frac{1}{6}(p+7)$ \rule{0pt}{11pt} \\
    \hline
\end{tabular}
    \caption{$\ZZ$-rank of the Bredon homology of $\PSL_2(\ZZ[\frac{1}{p}])$.}
    \label{tab:PSL1pBredon}
\end{table}

The computation for $\SL_2(\ZZ[1/p])$ is almost entirely analogous.  We highlight the differences below.

\begin{thm}[Theorem~\ref{thm.equiK.SL1overp}]\label{thm.equiK.Sl1overp.body}
Let $p$ be a prime and $\Gamma=\SLp$.  Then $K_n^\Gamma(\underbar{\emph{E}}\Gamma)$ is additively isomorphic to the direct sum of two copies of the corresponding equivariant $K$-homology group of $\PSLp$.
\end{thm}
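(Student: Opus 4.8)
The plan is to run the same Mayer--Vietoris computation as for $\PSLp$, now applied to the amalgam decomposition $\SLp \cong \SL_2(\ZZ)\ast_{\widetilde{\Gamma_0}(p)}\SL_2(\ZZ)$ coming from the action on the Bruhat--Tits tree of $\SL_2(\QQ_p)$ (cf.\ \cite{SerreTrees}), where $\widetilde{\Gamma_0}(p)$ is the preimage of $\Gamma_0(p)$ under the central extension $\{\pm I\}\rightarrowtail\SLp\twoheadrightarrow\PSLp$. Writing $Z=\{\pm I\}$ for the centre, the key device is the splitting of the complex representation ring of any finite subgroup $H$ with $Z\leq H$ according to the central character of $-I$,
\[\repc(H)=\repc^+(H)\oplus\repc^-(H),\]
where $-I$ acts trivially on $\repc^+(H)$ and by $-1$ on $\repc^-(H)$. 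Because $Z$ is central, induction between subgroups containing $Z$ preserves this central character, so the splitting is natural on the full subcategory of the orbit category consisting of those $\Gamma/H$ with $Z\leq H$.

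First I would compute the Bredon homology of the two amalgam factors. For $\SL_2(\ZZ)\cong\ZZ_4\ast_{\ZZ_2}\ZZ_6$ and for $\widetilde{\Gamma_0}(p)$ I would use the graph-of-finite-groups model for $\underbar{E}$ obtained by lifting the graph of groups for $\PSL_2(\ZZ)$ and $\Gamma_0(p)$; the crucial observation is that in each such model \emph{every} cell stabiliser (the vertex groups $\ZZ_4,\ZZ_6$ and the edge groups $Z$) contains $Z$, so no odd-order subgroup such as $\ZZ_3$ ever occurs as a stabiliser. Consequently the whole Bredon chain complex with $\repc$-coefficients splits as $C_\ast=C_\ast^+\oplus C_\ast^-$. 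The summand $C_\ast^+$ is canonically the Bredon complex of the quotient group ($\PSL_2(\ZZ)$, respectively $\Gamma_0(p)$), since $\repc^+(H)\cong\repc(H/Z)$ naturally, so $H_\ast(C^+)$ reproduces the entries of Table~\ref{tab:BredonPGamma0}.

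The main work is to show $C_\ast^-$ has the same homology as $C_\ast^+$. Here I would check that the induction maps match: for each leaf group $\widetilde{\ZZ_m}\in\{\ZZ_4,\ZZ_6\}$ the restriction of $\mathrm{Ind}_Z^{\widetilde{\ZZ_m}}$ to the minus part carries the sign character to the sum of the $m$ genuine characters, which in the natural bases is exactly the same integer matrix as $\mathrm{Ind}_{\{1\}}^{\ZZ_m}(\mathbf 1)$ (the regular representation) appearing in $C^+$. This gives an isomorphism of chain complexes $C_\ast^-\cong C_\ast^+$, whence each Bredon homology group of $\SL_2(\ZZ)$ and of $\widetilde{\Gamma_0}(p)$ is the direct sum of two copies of the corresponding group for $\PSL_2(\ZZ)$ and $\Gamma_0(p)$, and in particular is free abelian of twice the rank. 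I expect pinning down the graph-of-groups structure of $\widetilde{\Gamma_0}(p)$ (including the contribution of the free, loop, part) and the bookkeeping of the induction matrices on $C^-$ to be the main obstacle.

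Finally, the Mayer--Vietoris sequence of \cite[Corollary~3.32]{MislinNotes} for the amalgam is induced by the inclusions of the factors, which are inclusions of subgroups containing $Z$ and so respect the central-character splitting. Hence the whole long exact sequence decomposes as a direct sum of a $(+)$- and a $(-)$-sequence, the former being \emph{verbatim} the sequence used in the proof of Theorem~\ref{thm.equiK.PSL1overp.body} and the latter isomorphic to it. It follows that $H_n^{\FIN}(\underbar{E}\SLp;\repc)$ is free abelian of twice the rank of $H_n^{\FIN}(\underbar{E}\PSLp;\repc)$ for every $n$. Applying the equivariant Atiyah--Hirzebruch spectral sequence, which collapses at $E^2$ because its odd rows vanish, yields $K_0=H_0\oplus H_2$ and $K_1=H_1$ exactly as before, so each equivariant $K$-homology group of $\SLp$ is the direct sum of two copies of that of $\PSLp$, as claimed.
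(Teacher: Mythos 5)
Your route is in essence the paper's own: the paper likewise lifts the graph-of-groups models (every edge group becomes the central $Z=\{\pm I\}$, the vertex groups become $\ZZ_2$, $\ZZ_4$, $\ZZ_6$), observes that the Bredon homology of each amalgam factor doubles, runs the same Mayer--Vietoris sequence (split into two pieces using $H_1^{\FIN}(\SL_2(\ZZ);\repc)=0$), and finishes with the collapsed Atiyah--Hirzebruch spectral sequence. Your central-character splitting $\repc(H)=\repc^+(H)\oplus\repc^-(H)$ is a clean formal packaging of the factor computation, and it is correct there: every cell stabiliser of the lifted tree models contains $Z$, all edge stabilisers equal $Z$, and induction from the sign character of $Z$ to a cyclic vertex group is the sum of all its minus characters, so the differentials of $C_\ast^-$ agree with those of the projective complex in any choice of bases. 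Note also that at the $\ZZ_6$ vertices the identification is even natural, since $\ZZ_6\cong\ZZ_3\times Z$ gives $\repc^-(\ZZ_6)\cong\repc(\ZZ_3)$ compatibly with all automorphisms.

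The genuine soft spot is your last step, the claim that the minus Mayer--Vietoris sequence is \emph{isomorphic} to the plus one. The splitting is natural for inductions between subgroups containing $Z$, but the identification $\repc^-(H)\cong\repc(H/Z)$ is \emph{not} natural under conjugation: inversion on $\ZZ_4$ swaps the two minus characters $\chi_1,\chi_3$ while acting trivially on $\repc(\ZZ_2)$. The two maps $H_0^{\FIN}(\widetilde{\Gamma_0}(p);\repc)\to H_0^{\FIN}(\SL_2(\ZZ);\repc)$ in the sequence are induced by two different embeddings (the second twisted by conjugation by $\mathrm{diag}(1,p)$), so whenever $\Gamma_0(p)$ has $2$-torsion the minus matrix may differ from the plus matrix by such swaps; ``same integer matrix'' does not follow from the leaf computations inside the factors. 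What is needed at this point is fusion data, and this is exactly what the paper supplies instead: the analogue of Lemma~\ref{lem.ccl.psl} for $\SLp$, showing the number of conjugacy classes of finite-order elements is exactly twice that of $\PSLp$, which determines $H_0^{\FIN}(\SLp;\repc)$ via the colimit description and then $H_1$ by exactness. Your conclusion does survive the ambiguity --- at a $\ZZ_4$, whether $\chi_1$ in one copy fuses with $\chi_1$ or with $\chi_3$ in the other, the minus part contributes two classes where the plus part contributes one, so the rank of the colimit doubles either way --- but this robustness is an argument you must actually make (it is the conjugacy-class count in disguise); as written, ``the minus sequence is isomorphic to the plus sequence'' is unjustified.
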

\begin{proof}[Proof (Sketch).]
Let $\calf=\FIN$.  First, we must compute the Bredon homology of the lifts of $\PSL_2(\ZZ)$ and $\Gamma_0(p)$ to $\SL_2(\RR)$.  For this we use the graph of groups in Figure~\ref{fig.GoG.CofinFuch} and note that now every edge group is the same central copy of $\ZZ_2$ and the vertex groups change as follows: The vertices with trivial vertex group now have vertex group the same central copy of $\ZZ_2$.  The vertex groups isomorphic to $\ZZ_2$ are now $\ZZ_4$ and the vertex groups isomorphic to $\ZZ_3$ are now $\ZZ_6$ (each extended by the central $\ZZ_2$).  Computing the Bredon homology we find that in each case it is isomorphic to the direct sum of two copies of the corresponding Bredon homology group in the projective case.

Now, we apply the long exact Mayer-Vietoris sequence to the amalgamated free product decomposition of $\SLp$.  Since $H^\calf_1(\SL_2(\ZZ);\repc)=0$, like in the projective case, the sequence splits into two exact sequences.  The computation of $H^\mathcal{F}_2(\SLp))$ is immediate as before and is additively isomorphic to the direct sum of two copies of $H_2^\mathcal{F}(\PSLp;\repc)$.

To compute the zeroth and first homology groups we will again use the colimit isomorphism $H_0^\mathcal{F}(\Gamma;\repc) = \textrm{colim}_{\Gamma/H\in\orbf}\repc(H)$.  To use this we obtain a count of the total number of conjugacy classes of finite order elements in $\SLp$.  To do this use a near identical argument to Lemma~\ref{lem.ccl.psl} that takes into account the central $\ZZ_2$ subgroup.  It follows that the number of conjugacy classes of elements of finite order in $\SLp$ is equal to twice the number for the corresponding projective group.  It follows that the colimit computation for $H_0^\mathcal{F}(\SLp;\repc)$ is additively isomorphic to the direct sum of two copies of $H_0^\mathcal{F}(\PSLp;\repc)$, where $\Gamma=\SLp$.

From here one computes $H_1^\mathcal{F}(\SLp;\repc)$ in an identical manner to the projective case.  The resulting groups are isomorphic to the direct sum of two copies of the corresponding Bredon homology groups in the projective case.  The result now follows from the collapsed Atiyah-Hirzebruch spectral sequence given in \cite[Theorem~5.27]{MislinNotes} and we obtain $K^\Gamma_0(\underbar{E}\Gamma)=H_0^{\mathcal{F}}(\Gamma;\repc)\oplus H_2^{\mathcal{F}}(\Gamma;\repc)$ and $K^\Gamma_1(\underbar{E}\Gamma)=H_1^{\mathcal{F}}(\Gamma;\repc)$.
\end{proof}

\section{The Unstable Gromov-Lawson-Rosenberg Conjecture}\label{sec.GLR}
Given a smooth closed $n$-manifold $M$ a classical question is to ask whether $M$ admits a Riemannian metric of positive scalar curvature.  In a vast generalisation of the Atiyah-Singer index theorem, Rosenberg \cite{R1986b} exhibits a class in $KO_n^{\rm{top}}(C^\ast_r(\pi_1(M)))$ which is an obstruction to $M$ admitting a metric of positive scalar curvature.

More precisely, let $M$ be a closed spin $n$-manifold and $f\colon M\rightarrow B\Gamma$ be a continuous map for some discrete group $\Gamma$.  Let $\alpha\colon\Omega^{\textrm{Spin}}_n(B\Gamma)\rightarrow KO_n^{\rm{top}}(C_r^\ast(\Gamma))$ be the index of the Dirac operator.  If $M$ admits a metric of positive scalar curvature, then $\alpha[M,f]=0\in KO_n^{\rm{top}}(C_r^\ast(\Gamma))$

\begin{conjecture}[The Unstable GLR Conjecture]
Let $M$ be a closed spin $n$-manifold and $\Gamma=\pi_1(M)$.  If $f\colon M\rightarrow B\Gamma$ is a continuous map which induces the identity on the fundamental groups, then $M$ admits a metric positive scalar curvature if and only if $\alpha[M,f]=0\in KO_n^{\rm{top}}(C_r^\ast(\Gamma))$.
\end{conjecture}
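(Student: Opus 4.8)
The plan is to identify, in each dimension, the kernel of the index obstruction with the bordism classes carrying positive scalar curvature, by squeezing it between two inclusions. Throughout I restrict to closed spin manifolds $M^n$ with $n\ge 5$, so that the Gromov--Lawson--Schoen--Yau surgery technique applies; the finitely many low-dimensional cases are dealt with by the usual ad hoc arguments. Write $\Omega_n^{\mathrm{Spin},+}(B\Gamma)\subseteq\Omega_n^{\mathrm{Spin}}(B\Gamma)$ for the subgroup of classes $[M,f]$ with $f$ inducing the identity on $\pi_1$ and $M$ admitting positive scalar curvature; by the surgery principle this subgroup is saturated, so membership of $[M,f]$ already forces the given $M$ to carry such a metric (this is exactly where the hypothesis that $f$ induces the identity on $\pi_1$ is used). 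The index $\alpha$ factors as
\[
\alpha_n\colon \Omega_n^{\mathrm{Spin}}(B\Gamma)\xrightarrow{\ D\ } ko_n(B\Gamma)\xrightarrow{\ p\ } KO_n(B\Gamma)\xrightarrow{\ A\ }KO_n^{\mathrm{top}}(C_r^\ast(\Gamma)),
\]
where $D$ is the connective $ko$-orientation, $p$ passes to periodic $KO$-homology, and $A$ is the assembly map. Rosenberg's obstruction \cite{R1986b} gives $\Omega_n^{\mathrm{Spin},+}(B\Gamma)\subseteq\ker\alpha_n$, while the reduction of the unstable problem to connective $ko$-homology (Stolz, building on Jung) supplies the geometric input $\ker D_n\subseteq\Omega_n^{\mathrm{Spin},+}(B\Gamma)$. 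Hence it suffices to prove that $A\circ p$ is injective on $\im D_n$: then $\ker\alpha_n\subseteq\ker D_n\subseteq\Omega_n^{\mathrm{Spin},+}(B\Gamma)\subseteq\ker\alpha_n$, all three coincide, and the conjecture holds in dimension $n$.

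To analyse $A\circ p$ I would first invoke (BC), which by \cite{BK2004} yields the real Baum--Connes conjecture, identifying $A$ with the map $KO_n^\Gamma(E\Gamma)\to KO_n^\Gamma(\underbar{E}\Gamma)\cong KO_n^{\mathrm{top}}(C_r^\ast(\Gamma))$ induced by $E\Gamma\to\underbar{E}\Gamma$. Conditions (M) and (NM) produce the Lück--Weiermann pushout of $\Gamma$-CW complexes
\[
\begin{tikzcd}
\coprod_{(M)}\Gamma\times_M EM \arrow[r]\arrow[d] & E\Gamma\arrow[d]\\
\coprod_{(M)}\Gamma/M \arrow[r] & \underbar{E}\Gamma,
\end{tikzcd}
\]
the coproduct ranging over conjugacy classes of maximal finite subgroups $M$ (here $N_\Gamma(M)=M$ and $\underbar{E}M=\ast$). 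Applying $KO^\Gamma_\ast$ and the induction isomorphism $KO^\Gamma_\ast(\Gamma\times_M-)\cong KO^M_\ast(-)$ turns the associated Mayer--Vietoris sequence into a long exact sequence whose relative term is $\bigoplus_{(M)}\widetilde{KO}_\ast(BM)$ and whose connecting behaviour is governed by the finite-group assembly maps $\widetilde{KO}_\ast(BM)\to\widetilde{KO}^{\mathrm{top}}_\ast(C_r^\ast(M))$. Thus $\ker A$, and hence $\ker(A\circ p)$, is controlled summand by summand by the finite subgroups $M$.

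The crux is therefore the claim that, on $\im D$, these finite-group assembly maps are injective: this is precisely the Gromov--Lawson--Rosenberg conjecture for the groups $M$, known for finite groups with periodic cohomology by the eta-invariant computations of Botvinnik--Gilkey. The odd-order half of (PFS) is what makes this usable: for $M$ of odd order the reduced homology of $BM$ is all odd torsion, so $\widetilde{ko}_\ast(BM)\to\widetilde{KO}_\ast(BM)$ carries none of the $2$-primary $KO$-subtleties, and the image of $D$ lands in the part of $\widetilde{KO}_\ast(BM)$ on which assembly is injective. The bound $\dim\underbar{B}\Gamma\le 9$ enters through the Atiyah--Hirzebruch spectral sequence for the finite-dimensional model $\underbar{E}\Gamma$: it guarantees that $p$ is injective on $\im D_n$ in every relevant degree and that the relative computation above is complete, after which $8$-fold Bott periodicity removes any restriction on $n$ itself.

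The main obstacle I anticipate is exactly the injectivity claim of the previous paragraph: matching $\im D$ against the summands $\widetilde{KO}_\ast(BM)$ and checking, degree by degree, that assembly annihilates no nonzero class coming from spin bordism. This demands careful bookkeeping of the $ko$- versus $KO$-orientation for the periodic groups $M$ together with the $2$-local phenomena that the odd-order hypothesis is designed to exclude, and it is here that both (PFS) and the dimension bound are genuinely used. The remaining ingredients---the surgery saturation of $\Omega^{\mathrm{Spin},+}_n$, the factorisation of $\alpha$, and the equivariant pushout---are formal once these inputs are secured.
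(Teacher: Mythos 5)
Your overall skeleton (factorise $\alpha=\mu_\RR\circ p\circ D$, handle the bordism-to-$ko$ step by Stolz--Jung, decompose the assembly map over maximal finite subgroups via (M)/(NM)) matches the paper's strategy, but your central reduction contains a genuine error: it does not suffice, and is in fact false for the groups in question, that $\mu_\RR\circ p$ is injective on $\im D_n$. The Gromov--Lawson--Rosenberg conjecture for a finite group $H$ is \emph{not} the statement that assembly is injective on the image of $D$; it is the statement that $\Ker(\mu_\RR\circ p)\cap\im D$ coincides with the psc subgroup $ko^+_n(BH)$, and Botvinnik--Gilkey--Stolz \cite{BGS1997} compute this kernel to be a typically \emph{nonzero} torsion subgroup: lens-space classes in $\widetilde{ko}_n(B\ZZ_m)$, $m$ odd, are classes of psc manifolds with $D[M,f]\neq0$ but $\alpha[M,f]=0$. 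Hence for any $\Gamma$ with nontrivial odd-order torsion --- exactly the groups (PFS) is designed to cover --- your sandwich $\ker\alpha_n\subseteq\ker D_n$ fails, and the plan collapses to the known torsion-free case. Relatedly, your closing remark that Bott periodicity ``removes any restriction on $n$'' is not available: $ko$ is connective, not periodic, and the argument is run for each $n\geq5$ directly.

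The repair is what the paper actually does: show $\Ker\bigl(\mu_\RR\circ p\colon ko_n(B\Gamma)\rightarrow KO_n^{\rm{top}}(C_r^\ast(\Gamma))\bigr)$ equals $ko^+_n(B\Gamma)$, rather than zero. Given $\beta$ in this kernel, the commutative diagram comparing the long exact sequences for $\widetilde{ko}$ and $\widetilde{KO}^{\rm{top}}$ furnished by (BC), (M), (NM) (via \cite{pchain}; your equivariant pushout is the same input as the paper's Proposition), together with the Joachim--Schick result \cite[Lemma~2.6]{JS1998} that $p\colon\widetilde{ko}_n(\underbar{B}\Gamma)\rightarrow\widetilde{KO}_n(\underbar{B}\Gamma)$ is an isomorphism for $n\geq6$ and injective for $n=5$, lifts $\beta$ to some $\gamma\in\bigoplus_{(H)}\Ker\bigl(\mu_\RR\circ p\colon ko_n(BH)\rightarrow KO_n^{\rm{top}}(C_r^\ast(H))\bigr)$. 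Note the dimension bound $\dim\underbar{B}\Gamma\leq9$ is applied to the quotient $\underbar{B}\Gamma$, which is finite-dimensional; your phrasing ``$p$ is injective on $\im D_n$'' via an Atiyah--Hirzebruch argument cannot be run on $B\Gamma$ itself, which is infinite-dimensional whenever $\Gamma$ has torsion. Then \cite{BGS1997} identifies the kernel on each summand with $ko^+_n(BH)$ for odd-order $H$ with periodic cohomology, pushing $\gamma$ forward gives $\beta\in ko^+_n(B\Gamma)$, and Stolz's theorem \cite{Stolz1995} --- membership of $D[M,f]$ in $ko^+_n$ implies $M$ carries a psc metric, which is strictly stronger than your $\ker D\subseteq\Omega^{{\rm Spin},+}_n$ (the latter only uses $0\in ko^+_n$) --- concludes. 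So your diagram-chase and your instinct about where (PFS) enters are correct, but recasting the Botvinnik--Gilkey--Stolz input as an injectivity statement is the step that would fail.
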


The conjecture has been verified in the case of some finite groups \cite{Stolz1992, RS1994, KS1990, Rosenberg1986}, when the group has periodic cohomology, torsion-free groups for which the dimension of $B\Gamma$ is less than $9$ \cite{JS1998}, and cocompact Fuchsian groups \cite{DP03}.  However, there are counterexamples:  The first is due to Schick \cite{Schick1998} who disproves the conjecture for the direct product $\ZZ^4\times\ZZ_n$ when $n$ is odd; while other instances have been constructed in \cite{JS1998}.  For more information on the Unstable GLR Conjecture the reader should consult \cite{JS1998} and the references therein.

\subsection{Proof of Theorem~\ref{thm.GLR}}
We will now prove the conjecture for a large class of groups.  Our proof is structurally similar to the proof by Davis-Pearson \cite{DP03} so we will summarise their method and highlight any differences. 

Let $ko$ be the connective cover of $KO$ with covering map $p$ and let $D$ be the $ko$-orientation of spin bordism.  The map $\alpha$ (from above) is obtained by the following composition
\[\begin{tikzcd}
\Omega_n^{\textrm{Spin}}(B\Gamma) \arrow[r, "D"] & ko_n(B\Gamma) \arrow[r, "p"] & KO_n(B\Gamma) \arrow[r, "\mu_\RR"] & KO_n^{\rm{top}}(C_r^\ast(\Gamma))
\end{tikzcd} \]
We note that $ko_n(\ast)=0$ for $n<0$ and that $p$ is an isomorphism for $n\geq0$ on the one point space.

Recall from the introduction that a group $\Gamma$ satisfies:
\begin{enumerate}
    \item[(M)] If every finite subgroup is contained in a unique maximal finite subgroup.
    \item[(NM)] If $M$ is a maximal finite subgroup of $\Gamma$, then the normaliser $N_\Gamma(M)$ of $M$ is equal to $M$.
    \item[(BC)] If $\Gamma$ satisfies the Baum-Connes conjecture.
    \item[(PFS)] If all maximal finite subgroups of $\Gamma$ are odd order and have periodic cohomology.
\end{enumerate}

\begin{prop}
Let $\Gamma$ be a group satisfying (BC), (M), and (NM).  Let $\Lambda$ be a set of conjugacy classes of maximal finite subgroups of $\Gamma$.  There is a commutative diagram with exact rows
\[\begin{tikzcd}
\widetilde{KO}_{n+1}(\underbar{\emph{B}}\Gamma) \arrow[r] \arrow[d, "id"] &
\underset{(H)\in\Lambda}{\bigoplus}\widetilde{KO}_n(BH) \arrow[r] \arrow[d, "\mu_\RR"] &
\widetilde{KO}_{n}(B\Gamma) \arrow[d, "\mu_\RR"] \arrow[r] &
\widetilde{KO}_{n}(\underbar{\emph{B}}\Gamma) \arrow[d, "id"] \\
\widetilde{KO}_{n+1}(\underbar{\emph{B}}\Gamma) \arrow[r]                  & \underset{(H)\in\Lambda}{\bigoplus}\widetilde{KO}_n^{\rm{top}}(C_r^\ast(H)) \arrow[r]         & \widetilde{KO}_{n}^{\rm{top}}(C_r^\ast(\Gamma)) \arrow[r]        & \widetilde{KO}_{n}(\underbar{\emph{B}}\Gamma).    
\end{tikzcd}\]
\end{prop}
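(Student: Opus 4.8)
The plan is to realise \emph{both} rows as the long exact sequences attached to a single homotopy pushout square, and to exhibit the two middle vertical maps as instances of the Baum--Connes assembly map, so that commutativity becomes its naturality. The geometric input is the pushout of $\Gamma$-spaces that is available precisely when (M) and (NM) hold: writing $\Lambda$ for the conjugacy classes of maximal finite subgroups, there is a $\Gamma$-homotopy pushout
\[\coprod_{(H)\in\Lambda}\Gamma\times_H EH\longrightarrow E\Gamma,\qquad \coprod_{(H)\in\Lambda}\Gamma/H\longrightarrow\underbar{E}\Gamma,\]
whose left-hand vertical collapses each $EH$ to a point. Here (M) organises the finite subgroups by their unique maximal overgroups, while (NM) kills the Weyl-group twisting $N_\Gamma(H)/H$, so that the lower-left corner is exactly $\coprod_{(H)}\Gamma/H$ rather than $\coprod_{(H)}\Gamma/N_\Gamma(H)$.

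First I would push this square through the Davis--L\"uck functor $\KO\colon\orbf\to\spectra$. As this functor sends homotopy pushouts of $\Gamma$-CW complexes to homotopy pushouts of spectra, the outcome is a pushout square whose corners compute $\bigoplus_{(H)}\widetilde{KO}_\ast(BH)$ and $\widetilde{KO}_\ast(B\Gamma)$ along the top (the two $\Gamma$-free corners give ordinary real $K$-homology of the quotients), and $\bigoplus_{(H)}\widetilde{KO}^{\mathrm{top}}_\ast(C^\ast_r(H))$ and $KO^\Gamma_\ast(\underbar{E}\Gamma)$ along the bottom. The left-hand map $\Gamma\times_H EH\to\Gamma/H$ induces the assembly map for $H$, and the right-hand map $E\Gamma\to\underbar{E}\Gamma$ induces, once (BC) is used to identify $KO^\Gamma_\ast(\underbar{E}\Gamma)\cong KO^{\mathrm{top}}_\ast(C^\ast_r(\Gamma))$, the assembly map $\mu_\RR$ for $\Gamma$. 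Reading off the two horizontal cofibre sequences of the pushout, and using that the horizontal cofibres of a homotopy pushout agree, yields two long exact sequences sharing a common third term, which is the $\widetilde{KO}_\ast(\underbar{B}\Gamma)$ entry appearing at the ends of both rows; the comparison between the rows is the identity on that term.

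The step that I expect to carry the real weight is the identification of this common cofibre with $\widetilde{KO}_\ast(\underbar{B}\Gamma)$ on \emph{both} rows at once. On the geometric side the cofibre of $\bigoplus_{(H)}\widetilde{KO}_\ast(BH)\to\widetilde{KO}_\ast(B\Gamma)$ is the reduced real $K$-homology of the quotient $\underbar{B}\Gamma=\underbar{E}\Gamma/\Gamma$, obtained by collapsing the singular set; here one must handle the disjoint basepoints of $\coprod_{(H)}\ast$ in the lower-left corner, splitting them off so that the sequence takes the stated three-term shape rather than the full six-term Mayer--Vietoris form. That the \emph{analytic} cofibre is the \emph{same} $\widetilde{KO}_\ast(\underbar{B}\Gamma)$ is exactly where (BC) is indispensable: together with the fact that finite groups satisfy Baum--Connes (so each finite-subgroup column is an isomorphism onto its $C^\ast$-algebraic counterpart), it forces the relative assembly map -- assembly away from the maximal finite subgroups -- to be an isomorphism. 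Equivalently, the spectra-level square being a homotopy pushout makes the induced map on horizontal cofibres an equivalence, and this is the precise reason the right-hand and left-hand vertical maps in the displayed diagram are the identity.

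Finally I would assemble the two long exact sequences into the ladder, reading the connecting homomorphisms $\widetilde{KO}_{n+1}(\underbar{B}\Gamma)\to\bigoplus_{(H)}\widetilde{KO}_n(BH)$ and $\widetilde{KO}_n(B\Gamma)\to\widetilde{KO}_n(\underbar{B}\Gamma)$ off the Puppe sequence. Commutativity of each square is the naturality of the Davis--L\"uck assembly transformation along the maps of the pushout, plus the compatibility of the Baum--Connes identification for $\Gamma$ with those for the subgroups $H$; this is structurally the argument of Davis--Pearson \cite{DP03}. The only genuinely delicate points, as flagged above, are the reduced-versus-unreduced bookkeeping at the $\coprod_{(H)}\ast$ corner and the relative-isomorphism statement underlying the identity maps on $\widetilde{KO}_\ast(\underbar{B}\Gamma)$; everything else is formal.
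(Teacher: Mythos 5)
Your proposal is correct and takes essentially the same route as the paper: the paper invokes \cite[Corollary~3.13]{pchain} (whose proof rests on exactly the $\Gamma$-pushout you build from (M) and (NM)) to obtain the two long exact sequences, and then cites the diagram chase of \cite[Proposition~4]{DP03}, which is precisely the ladder argument you carry out by hand via the Davis--L\"uck functor and the equivalence of horizontal cofibres. Your write-up simply unpacks those citations, including the reduced-versus-unreduced bookkeeping at the $\coprod_{(H)}\ast$ corner, which the paper's unreduced sequence records through the extra $\bigoplus_{(H)\in\Lambda}\pi_n(\bfE)$ summand.
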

\begin{proof}
First, since $\Gamma$ satisfies (BC), (M) and (NM) by either \cite[Corollary~3.13]{pchain} or the proof of \cite[Theorem~4.1]{pchain} for any constant functor $\bfE_c\colon\orbf\rightarrow\spectra$ by $\Gamma/H\mapsto \bfE$ there are long exact sequences
\[\begin{tikzcd}
\cdots \arrow[r] & \underset{(H)\in\Lambda}{\bigoplus}H_n(BH;\bfE) \arrow[r] & \left(\underset{(H)\in\Lambda}{\bigoplus}\pi_n(\bfE)\right) \oplus H_n(B\Gamma;\bfE) \arrow[r] & H_n(\underbar{B}\Gamma;\bfE) \arrow[r] & \cdots
\end{tikzcd} \]
and
\[\begin{tikzcd}
\cdots \arrow[r] &  \underset{(H)\in\Lambda}{\bigoplus}\widetilde{H}_n(BH;\bfE) \arrow[r] & \widetilde{H}_n(B\Gamma;\bfE) \arrow[r] & \widetilde{H}_n(\underbar{B}\Gamma;\bfE) \arrow[r] & \cdots
\end{tikzcd} \]
The result then follows a diagram chase exactly as in \cite[Proposition~4]{DP03}, taking $\bfE=\KO$ and the isomorphism
\[\pi_n\left(\underset{\orbf}{\hocolim}(\bfE_c)\right)\cong H_n(\underbar{B}\Gamma;\bfE). \]
\end{proof}

\begin{thm}[Theorem~\ref{thm.GLR}]\label{thm.GLR.body}
Let $\Gamma$ be a group satisfying (BC), (M), (NM) and (PFS).  If $\underbar{\emph{B}}\Gamma$ is finite and has dimension at most $9$, then the Unstable Gromov-Lawson-Rosenberg Conjecture holds for $\Gamma$.
\end{thm}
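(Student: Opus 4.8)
The plan is to follow the Davis--Pearson strategy, substituting the abstract hypotheses (BC), (M), (NM) and (PFS) for their explicit Fuchsian input. The \emph{only if} direction needs no hypotheses: it is Rosenberg's theorem that $\alpha[M,f]=0$ whenever $M$ carries a metric of positive scalar curvature. The content is therefore the \emph{if} direction, and the engine driving it is Stolz's theorem together with the Gromov--Lawson--Rosenberg surgery technique. These combine into the statement that, for $n\geq 5$ and a map $f$ inducing the identity on fundamental groups, a closed spin $n$-manifold $M$ with $\pi_1(M)=\Gamma$ admits a metric of positive scalar curvature if and only if its index $D[M,f]$ vanishes in $ko_n(B\Gamma)$. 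Since $\alpha=\mu_\RR\circ p\circ D$, the conjecture for $\Gamma$ in dimensions $n\geq 5$ is thus equivalent to the assertion that $\mu_\RR\circ p$ is injective on the image of $D$; the finitely many cases $n\leq 4$ are disposed of by classical low-dimensional methods.

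First I would pass from the connective theory $\ko$, in which Stolz's obstruction lives, to the periodic theory $\KO$ in which the diagram of the preceding Proposition and the assembly map are phrased. The fibre of the covering map $p\colon\ko\to\KO$ has homotopy concentrated in degrees at most $-5$, so on a complex of dimension at most $9$ the induced map on reduced homology is injective in every degree $n\geq 5$. This is precisely why the dimension hypothesis reads ``at most $9$'': it matches the surgery threshold $n\geq 5$. Using (BC) to factor the real assembly map through the equivariant $KO$-homology of the finite-dimensional model $\underbar{E}\Gamma$, together with the fact (recorded above) that $p$ is an isomorphism on the point for $n\geq 0$, this comparison lets me replace injectivity of $\mu_\RR\circ p$ on $\mathrm{im}(D)$ by injectivity of the periodic assembly $\mu_\RR\colon\widetilde{KO}_n(B\Gamma)\to\widetilde{KO}_n^{\mathrm{top}}(C_r^\ast(\Gamma))$, which is the central vertical arrow of the Proposition.

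Next I would run the diagram chase. The rows of the ladder in the Proposition are exact and its two outer vertical maps are identities, so the four lemma shows that the central $\mu_\RR$ is injective as soon as the left-hand vertical map
\[\mu_\RR\colon\bigoplus_{(H)\in\Lambda}\widetilde{KO}_n(BH)\longrightarrow\bigoplus_{(H)\in\Lambda}\widetilde{KO}_n^{\mathrm{top}}(C_r^\ast(H))\]
is injective. Because $\underbar{B}\Gamma$ is finite the set $\Lambda$ is finite, so this reduces at once to a single maximal finite subgroup $H$.

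The heart of the proof, and the step I expect to be the main obstacle, is this base case: for a finite group $H$ that is of odd order and has periodic cohomology, the reduced real assembly map $\mu_\RR\colon\widetilde{KO}_n(BH)\to\widetilde{KO}_n^{\mathrm{top}}(C_r^\ast(H))$ is injective. Both parts of hypothesis (PFS) are used here. Periodicity of the cohomology of $H$ is what makes the Gromov--Lawson--Rosenberg conjecture available for $H$ itself---this is the periodic-cohomology case noted in the introduction to this section---and it forces $\widetilde{KO}_\ast(BH)$ to be a well-understood torsion group; the odd-order hypothesis then removes all $2$-primary phenomena, reducing the real statement to its complex analogue for $H$, which is governed by the representation theory of $H$ and is understood in this setting. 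I expect the delicate bookkeeping to lie in keeping the passages between $ko$ and $KO$, and between reduced and unreduced theories, consistent across the diagram; once the finite-subgroup injectivity is secured, the four lemma and the Stolz reduction assemble directly into the theorem.
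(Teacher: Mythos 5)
Your scaffolding agrees with the paper's: the ladder from the preceding Proposition, the reduction to the (finitely many) maximal finite subgroups, and the connective-to-periodic comparison, whose numerology you get exactly right --- the fibre of $p\colon ko\to KO$ has homotopy in degrees $\leq -5$, so over a complex of dimension at most $9$ the map $p$ is injective on $\widetilde{ko}_n$ for $n\geq 5$ and an isomorphism for $n\geq 6$, which is the Joachim--Schick input the paper cites. But the heart of your argument contains two linked errors. First, the surgery-theoretic reduction is misquoted: Stolz's theorem does \emph{not} say that for $n\geq 5$ a closed spin manifold admits positive scalar curvature iff $D[M,f]$ \emph{vanishes} in $ko_n(B\Gamma)$; it says $M$ admits such a metric iff $D[M,f]$ lies in the subgroup $ko^+_n(B\Gamma)$ of classes represented by positively curved manifolds, and this subgroup is typically nonzero. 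Consequently the GLR conjecture is \emph{not} equivalent to injectivity of $\mu_\RR\circ p$ on $\im(D)$; it is equivalent to the containment $\Ker(\mu_\RR\circ p)\cap\im(D)\subseteq ko^+_n(B\Gamma)$. Injectivity would suffice, but it fails badly in the presence of torsion.

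Second, and fatally, the base case you flag as the heart of the proof is false: for $H$ finite of odd order the real group algebra is $\RR\times\prod_j M_{n_j}(\CC)$ (an odd-order group has no nontrivial real-type or quaternionic-type irreducibles), so $\widetilde{KO}^{\rm top}_n(C_r^\ast(H))$ is torsion-free and concentrated in even degrees, while $\widetilde{KO}_n(BH)$ is entirely torsion; the reduced assembly map is therefore \emph{zero} wherever its source is nonzero. Concretely, a lens space $S^{2k-1}/\ZZ_p$ carries positive scalar curvature yet represents a nonzero torsion class in $\widetilde{ko}_{2k-1}(B\ZZ_p)$ (detected by eta invariants) which is killed by $\mu_\RR\circ p$ --- precisely the kind of kernel element your argument would need to exclude but cannot. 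The paper's route through this point is the Botvinnik--Gilkey--Stolz theorem: for $H$ of odd order with periodic cohomology, $\Ker\bigl(\mu_\RR\circ p\colon ko_n(BH)\to KO_n^{\rm top}(C_r^\ast(H))\bigr)=ko^+_n(BH)$. The diagram chase (using the $p$-isomorphism on $\widetilde{ko}_\ast(\underbar{B}\Gamma)$ in the range above) lifts a kernel element $\beta$ for $\Gamma$ to kernel elements $\gamma$ over the maximal finite subgroups; BGS identifies these $\gamma$ as classes of positively curved manifolds, functoriality of $ko^+$ along $BH\to B\Gamma$ gives $\beta\in ko^+_n(B\Gamma)$, and Stolz's theorem --- invoked at the end, not the beginning --- produces the metric. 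This is exactly where (PFS) enters: not to force injectivity, which is hopeless, but to identify the kernel with the positively curved subgroup.
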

\begin{proof}
Let $\ko$ be the spectrum of the connective cover of $\KO$.  Via the cover we obtain a natural transformation $p:\ko_c\rightarrow\KO_c$ of constant $\orbf$-$\spectra$.  From the previous proposition we obtain a commutative diagram
\begin{equation}\label{eqn.comdia}\tag{$\dagger$}\begin{tikzcd}
\widetilde{ko}_{n+1}(\underbar{B}\Gamma) \arrow[r] \arrow[d, "p"] & \underset{(H)\in\Lambda}{\bigoplus}\widetilde{ko}_n(BH) \arrow[r] \arrow[d, "\mu_\RR\circ p"] & \widetilde{ko}_{n}(B\Gamma) \arrow[d, "\mu_\RR\circ p"] \arrow[r] & \widetilde{ko}_{n}(\underbar{B}\Gamma) \arrow[d, "p"] \\
\widetilde{KO}_{n+1}(\underbar{B}\Gamma) \arrow[r]                & \underset{(H)\in\Lambda}{\bigoplus}\widetilde{KO}_n^{\rm{top}}(C_r^\ast(H)) \arrow[r]          & \widetilde{KO}_{n}^{\rm{top}}(C_r^\ast(\Gamma)) \arrow[r]          & \widetilde{KO}_{n}(\underbar{B}\Gamma).              
\end{tikzcd} \end{equation}

By Joachim-Schick \cite[Lemma~2.6]{JS1998} $p$ is an isomorphism for $n\geq 6$ and an injection for $n=5$.  Now, suppose that $n\geq 5$ so we are in the setting of the GLR conjecture.  Consider an element \[\beta\in K:=\Ker\left(\mu_\RR\circ p:ko_n(B\Gamma)\rightarrow KO_n(C_r^\ast(\Gamma;\RR)\right)\]
and note that $K\cong \Ker(\mu_\RR\circ p:\widetilde{ko}_n(B\Gamma)\rightarrow \widetilde{KO}_n^{\rm{top}}(C_r^\ast(\Gamma))$.  Combining the diagram \eqref{eqn.comdia} with the isomorphism $p\colon \widetilde{ko}_n(\underbar{B}\Gamma)\rightarrow\widetilde{KO}_n^{\rm{top}}(\underbar{B}\Gamma)$ for $n\geq 6$ (injection for $n=5$), we can deduce that there exists
\[\gamma\in\Ker\left(\underset{(H)\in\Lambda}{\bigoplus}ko_n(BH)\rightarrow \underset{(H)\in\Lambda}{\bigoplus}KO_n(BH) \right) \]
which maps to $\beta$. 

For a group $L$ let $ko^+_n(BL)$ be the subgroup of $ko_n(BL)$ given by $D[M,f]$ where $M$ is a positively curved spin manifold and $f$ is a continuous map.  In \cite{BGS1997} the authors prove for any finite group of odd order with periodic cohomology $H$, that $ko^+_n(BH)=\Ker(\mu_\RR\circ p: ko_n(BH)\rightarrow KO_n^{\rm{top}}(C_r^\ast(H))$.  Thus, we have $\gamma\in ko^+_n(BH)$ and $\beta\in ko^+_n(B\Gamma)$.  Now, in \cite{Stolz1995} it is proven that if $D[M,f]\in ko^+_n(BG)$, then $M$ admits a metric of positive scalar curvature.  In particular, we are done.
\end{proof}

\begin{remark}\label{remark.GLR}
It is unclear whether the assumption that the finite subgroups having odd order can be dropped.  Indeed, it was pointed out to the author by J.F. Davis that the statement of \cite[Corollary~2.2]{BGS1997} contains a misprint.  One should instead (in the notation of \cite{BGS1997}) define $\mathcal{Y}_n(B\pi)$ to be the kernel of $A\circ p$ restricted to the subgroup $D(\Omega^n(B\pi))\subseteq ko_n(B\pi)$. After making this correction, the statements and proofs in the paper are correct.  This error caused a mistake in the main theorem of \cite{DP03} which is only correct if one restricts to Fuchsian groups whose torsion only has odd order.
\end{remark}

\begin{corollary}
Let $M$ be a connected closed spin $n$-manifold and let $\Gamma=\pi_1(M)$ be a group satisfying (M), (NM) and (PFS).  Suppose the assembly map $\mu_\RR$ is injective and $\underbar{\emph{B}}\Gamma$ is finite of dimension $N$.  If $n\geq\max\{5,N-4\}$, then $M$ admits a metric with positive scalar curvature if and only if $\alpha[M,f]=0$.
\end{corollary}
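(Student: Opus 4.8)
The plan is to derive the corollary from Theorem~\ref{thm.GLR.body} by isolating the two places where its hypotheses were used---the full conjecture (BC) and the dimension bound $\dim\underbar{B}\Gamma\leq 9$---and relaxing them to injectivity of $\mu_\RR$ and to $n\geq\max\{5,N-4\}$ respectively. The forward implication is Rosenberg's index obstruction and holds unconditionally, so only the converse is at issue: assuming $\alpha[M,f]=0$ I must exhibit a positive scalar curvature metric on $M$. Throughout I write the index map as $\alpha=\mu_\RR\circ a\circ p\circ D$, where $D\colon\Omega^{\textrm{Spin}}_n(B\Gamma)\to ko_n(B\Gamma)$, $p\colon ko_n\to KO_n$ is the connective-cover map, $a\colon KO_n(B\Gamma)\to KO^\Gamma_n(\underbar{E}\Gamma)$ is induced by the $\Gamma$-map $E\Gamma\to\underbar{E}\Gamma$, and $\mu_\RR\colon KO^\Gamma_n(\underbar{E}\Gamma)\to KO^{\rm top}_n(C^\ast_r(\Gamma))$ is the assembly map of the hypothesis.

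First I would dispose of the dimension bookkeeping. The argument only requires that the connective-cover map be well behaved on $\underbar{B}\Gamma$ in two adjacent degrees: $p\colon\widetilde{ko}_n(\underbar{B}\Gamma)\to\widetilde{KO}_n(\underbar{B}\Gamma)$ injective and $p\colon\widetilde{ko}_{n+1}(\underbar{B}\Gamma)\to\widetilde{KO}_{n+1}(\underbar{B}\Gamma)$ surjective. The homotopy cofibre of $p\colon\ko\to\KO$ has homotopy concentrated in negative degrees, and since $KO_{-1}(\ast)=KO_{-2}(\ast)=KO_{-3}(\ast)=0$ while $KO_{-4}(\ast)=\ZZ$, its lowest nonzero homotopy group lies in degree $-4$. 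Running the Atiyah--Hirzebruch spectral sequence on the $N$-dimensional complex $\underbar{B}\Gamma$ then shows, exactly as in \cite[Lemma~2.6]{JS1998}, that $p$ is injective in degrees $\geq N-4$ and an isomorphism in degrees $\geq N-3$. The assumption $n\geq N-4$ therefore supplies both required facts, while $n\geq 5$ keeps $M$ in the range where the surgery results of \cite{Stolz1995} apply; taking $N\leq 9$ recovers the thresholds of Theorem~\ref{thm.GLR.body}.

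Next I would remove the use of full Baum--Connes. In the proof of Theorem~\ref{thm.GLR.body} the conjecture (BC) was used only to identify $\widetilde{KO}^{\rm top}_\ast(C^\ast_r(\Gamma))$ with $KO^\Gamma_\ast(\underbar{E}\Gamma)$ and thereby make the bottom row of $(\dagger)$ exact. However, conditions (M) and (NM) alone already yield, via \cite[Corollary~3.13]{pchain}, the Mayer--Vietoris sequence for the equivariant theory $KO^\Gamma_\ast(\underbar{E}\Gamma)$ whose building blocks over the finite subgroups are $\widetilde{KO}^{\rm top}_n(C^\ast_r(H))=KO^H_n(\ast)$. I would therefore carry out the diagram chase of \cite[Proposition~4]{DP03} using this sequence, with third term $KO^\Gamma_n(\underbar{E}\Gamma)$ in place of $KO^{\rm top}_n(C^\ast_r(\Gamma))$; this bottom row is exact unconditionally. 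The assembly map enters at a single point: the hypothesis $\alpha[M,f]=\mu_\RR\bigl(a(p(D[M,f]))\bigr)=0$ combined with injectivity of $\mu_\RR$ forces the class $a(p(D[M,f]))$ to vanish already in $KO^\Gamma_n(\underbar{E}\Gamma)$, which is precisely the input the chase consumes. Surjectivity of the assembly map is never needed.

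With both modifications in place the rest of the proof is verbatim that of Theorem~\ref{thm.GLR.body}. Writing $\beta=D[M,f]\in ko_n(B\Gamma)$, the chase produces a class $\gamma\in\bigoplus_{(H)\in\Lambda}\Ker\bigl(\mu_\RR\circ p\colon ko_n(BH)\to KO^{\rm top}_n(C^\ast_r(H))\bigr)$ mapping to $\beta$; by the corrected \cite[Corollary~2.2]{BGS1997} (see Remark~\ref{remark.GLR}), where (PFS) guarantees every maximal finite $H$ is of odd order with periodic cohomology, this kernel equals $ko^+_n(BH)$. Hence $\gamma$, and so $\beta$, is represented by positively curved spin manifolds, and \cite{Stolz1995} delivers a positive scalar curvature metric on $M$. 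I expect the only real subtlety to be the bookkeeping of the middle paragraph: one must check that substituting $KO^\Gamma_n(\underbar{E}\Gamma)$ for the reduced $C^\ast$-algebra term keeps every square of the $\ko$-to-$KO^\Gamma$ comparison commutative and the bottom row exact, so that injectivity of $\mu_\RR$ genuinely substitutes for (BC).
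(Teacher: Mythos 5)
Your proposal is correct and is precisely the argument the paper intends: the corollary is stated without its own proof as a rerun of Theorem~\ref{thm.GLR.body}, and your two modifications are exactly the right ones --- generalising \cite[Lemma~2.6]{JS1998} via the cofibre of $\ko\to\KO$ to get injectivity of $p$ in degrees $\geq N-4$ and surjectivity in degrees $\geq N-3$ (both supplied by $n\geq\max\{5,N-4\}$), and running the Davis--Pearson chase against the unconditional (M)/(NM) exact sequence with third term $KO^\Gamma_n(\underbar{E}\Gamma)$, so that injectivity of $\mu_\RR$ alone converts $\alpha[M,f]=0$ into the vanishing of $a(p(D[M,f]))$ that the chase consumes. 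One cosmetic slip: the cofibre of $p\colon\ko\to\KO$ has homotopy concentrated in degrees $\leq -4$, so its \emph{top} (not lowest) nonzero homotopy group sits in degree $-4$; the degree bounds you derive from this are nonetheless the correct ones.
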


\subsection{Some examples}
In this section we will detail some applications of Theorem~\ref{thm.GLR} to various families of groups.  These results are new whenever the groups involved are infinite and have torsion.

\subsubsection{Graphs of groups}
In \cite[Theorem~3.1]{Saldana2020} it is shown that the fundamental groups of graphs of groups with vertex groups satisfying (M) and (NM) and with torsion-free edge groups, satisfy (M) and (NM).  It follows that we have the following combination theorem:

\begin{corollary}
The $\Gamma$ be a finitely presented fundamental group of a graph of groups such that the vertex groups satisfy (BC), (M) and (NM) and the edge groups are torsion-free and satisfy (BC).  If the vertex groups satisfy (PFS) and $\underbar{\emph{B}}\Gamma$ has dimension at most $9$, then $\Gamma$ satisfies the Unstable GLR Conjecture.
\end{corollary}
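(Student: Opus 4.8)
The plan is to show that $\Gamma$ satisfies every hypothesis of Theorem~\ref{thm.GLR.body} and then invoke that theorem directly; no new positive-scalar-curvature input is required beyond what is already packaged there. Two of the four group-theoretic conditions come essentially for free: properties (M) and (NM) for $\Gamma$ are exactly the conclusion of \cite[Theorem~3.1]{Saldana2020}, applied to a graph of groups whose vertex groups satisfy (M) and (NM) and whose edge groups are torsion-free, as recalled immediately before the statement.

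For (BC) I would argue exactly as in Lemma~\ref{lem.BC}. Let $T$ be the Bass--Serre tree of the splitting; this is a $1$-dimensional contractible $\Gamma$-CW complex whose cell stabilisers are precisely the conjugates of the vertex and edge groups. By hypothesis every such stabiliser satisfies the Baum--Connes conjecture, so the permanence result \cite[Theorem~5.13]{MislinNotes} for groups acting on trees gives (BC) for $\Gamma$.

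The substantive remaining point is (PFS). Here I would use the standard Bass--Serre fact that a finite subgroup of $\Gamma$ fixes a point of $T$ and is therefore subconjugate to a vertex group. Given a maximal finite subgroup $M\leq\Gamma$, after conjugation $M\leq G_v$ for some vertex group $G_v$; property (M) for $G_v$ places $M$ inside a maximal finite subgroup $M'$ of $G_v$, and since $M'$ is a finite subgroup of $\Gamma$ containing the $\Gamma$-maximal group $M$, we must have $M=M'$. Thus every maximal finite subgroup of $\Gamma$ is conjugate to a maximal finite subgroup of a vertex group, and (PFS) for the vertex groups (odd order with periodic cohomology) transfers verbatim to $\Gamma$.

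Finally I would confirm the geometric hypothesis: building $\underbar{E}\Gamma$ from the models $\underbar{E}G_v$ and $\underbar{E}G_e$ by the usual tree-of-spaces gluing over $T$ produces a cocompact model provided the graph and the vertex and edge models are finite, which is what one must extract from the finite presentation of $\Gamma$; the dimension bound of $9$ is assumed outright. With (BC), (M), (NM), (PFS) in place and $\underbar{B}\Gamma$ finite of dimension at most $9$, Theorem~\ref{thm.GLR.body} applies and completes the proof. The step I expect to require the most care is the (PFS) verification: one must check that torsion-freeness of the edge groups, together with (M) for $\Gamma$ coming from \cite{Saldana2020}, prevents any maximal finite subgroup of a vertex group from being enlarged or fused inside $\Gamma$, so that the maximal finite subgroups of $\Gamma$ coincide up to conjugacy with those of the vertex groups; a secondary point is confirming that the gluing construction really yields a finite $\underbar{B}\Gamma$.
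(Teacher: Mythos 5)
Your proposal is correct and follows essentially the same route the paper intends: the corollary is stated there as an immediate consequence of \cite[Theorem~3.1]{Saldana2020} together with Theorem~\ref{thm.GLR.body}, and you supply precisely the implicit verifications --- (M)/(NM) from Salda\~{n}a's combination theorem, (BC) via the Bass--Serre tree exactly as in Lemma~\ref{lem.BC}, and (PFS) from the standard fact that finite subgroups fix points of the tree and are therefore subconjugate to vertex groups, where (M) for the vertex group forces a $\Gamma$-maximal finite subgroup to coincide with a maximal finite subgroup of that vertex group. Your closing caveat about extracting a finite model for $\underbar{B}\Gamma$ reflects a looseness already present in the paper's statement (Theorem~\ref{thm.GLR.body} assumes $\underbar{B}\Gamma$ finite) rather than a defect peculiar to your argument.
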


\subsubsection{$3$-manifold groups}  
In \cite[Section~3.3]{Saldana2020} it is shown that $3$-manifold groups satisfy (M) and (NM) and there is a well known classification of finite subgroups of orientable connected $3$-manifold groups.  These are exactly the groups which act freely on the $3$-sphere and so have periodic cohomology by \cite[Chapter VI.9]{BrownBook}. In \cite{MattheyOyonoPitsch08} it is shown $3$-manifold groups satisfy (BC).  Applying Theorem~\ref{thm.GLR} we obtain the following result (which is new whenever $\Gamma$ is infinite and contains torsion):

\begin{corollary}
Let $M$ be a closed orientable connected $3$-manifold
with fundamental group $\Gamma$.  If $\Gamma$ has no elements of order $2$ then $\Gamma$ satisfies the Unstable GLR Conjecture.
\end{corollary}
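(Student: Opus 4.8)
The plan is to deduce the corollary directly from Theorem~\ref{thm.GLR.body} by checking that $\Gamma$ satisfies all of its hypotheses, so the work is entirely in verifying (BC), (M), (NM), (PFS) together with the finiteness and dimension bound on $\underbar{B}\Gamma$. Three of these are already assembled in this subsection: (M) and (NM) hold for $3$-manifold groups by \cite[Section~3.3]{Saldana2020}, and (BC) holds by \cite{MattheyOyonoPitsch08}. Thus I would isolate the two genuinely new points, namely condition (PFS) and the construction of a suitable model for $\underbar{B}\Gamma$.

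For (PFS), I would invoke the classification of finite subgroups of orientable connected $3$-manifold groups recalled above: every finite subgroup of $\Gamma$ acts freely on $S^3$ and therefore has periodic cohomology by \cite[Chapter~VI.9]{BrownBook}. The hypothesis that $\Gamma$ contains no element of order $2$ then forces every finite subgroup to have odd order, since a finite group of even order would contain an involution by Cauchy's theorem. Hence every maximal finite subgroup is of odd order with periodic cohomology, which is exactly (PFS).

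The remaining, and most delicate, step is to exhibit a finite model for $\underbar{B}\Gamma$ of dimension at most $9$; in fact I expect dimension $3$ to suffice. Here I would use the prime decomposition $M \cong P_1 \# \cdots \# P_k \# (S^2\times S^1)^{\#\ell}$ and the resulting free product splitting $\Gamma \cong \pi_1(P_1)\ast\cdots\ast\pi_1(P_k)\ast F_\ell$, where $F_\ell$ is the free group of rank $\ell$. By the Sphere Theorem, each irreducible prime factor with infinite fundamental group is aspherical, so that factor group is torsion-free and admits the closed $3$-manifold itself as a finite $3$-dimensional model for $E$; each irreducible factor with finite fundamental group $G$ is handled by $\underbar{E}G=\ast$; and each $S^2\times S^1$ factor contributes a copy of $\ZZ$ with $B\ZZ=S^1$. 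Realising the free product as the fundamental group of a finite graph of groups with trivial edge groups, the Bass--Serre tree assembles these pieces into a cocompact model for $\underbar{E}\Gamma$ of dimension $\max\{3,1\}=3$; passing to the quotient yields a finite $\underbar{B}\Gamma$ of dimension $3$.

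Once (BC), (M), (NM), (PFS) are in place and $\underbar{B}\Gamma$ is known to be finite of dimension at most $9$, Theorem~\ref{thm.GLR.body} applies verbatim and gives the Unstable GLR Conjecture for $\Gamma$. I expect the main obstacle to be the third step: carefully assembling the cocompact low-dimensional model for $\underbar{E}\Gamma$ out of the prime decomposition, which leans on the asphericity of the irreducible infinite-$\pi_1$ pieces and on the graph-of-groups construction of classifying spaces for proper actions. Everything else reduces to citing the structural results already gathered in this section and a one-line application of Cauchy's theorem to rule out even-order torsion.
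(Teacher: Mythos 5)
Your argument is correct and follows the same route as the paper, which likewise cites \cite[Section~3.3]{Saldana2020} for (M) and (NM), \cite{MattheyOyonoPitsch08} for (BC), and the classification of finite subgroups of orientable $3$-manifold groups as groups acting freely on $S^3$ (hence with periodic cohomology by \cite[Chapter~VI.9]{BrownBook}), with the no-$2$-torsion hypothesis yielding odd order via exactly your Cauchy's theorem observation, before invoking Theorem~\ref{thm.GLR.body}. The one point of divergence is that the paper applies the theorem without explicitly verifying that $\underbar{B}\Gamma$ is finite of dimension at most $9$: your construction --- Kneser--Milnor prime decomposition, asphericity of the irreducible pieces with infinite fundamental group via the Sphere Theorem (so those factors are torsion-free with the closed manifold itself as a finite $3$-dimensional model), $\underbar{E}G=\ast$ for the finite factors, $S^1$ for the $S^2\times S^1$ factors, and the Bass--Serre tree-of-spaces assembly giving a cocompact model for $\underbar{E}\Gamma$ of dimension $3$ --- is a correct filling-in of that implicit hypothesis, and the application of the theorem is not justified without some such verification. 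So your proposal is not merely a match for the paper's proof but a more complete version of it.
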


\subsubsection{One-relator groups}
In \cite[Page~32]{pchain} it is shown that one-relator groups satisfy (BC), (M) and (NM) and admit a two dimensional model for $\underbar{E}\Gamma$.  Applying Theorem~\ref{thm.GLR} we obtain the following result (which to the authors knowledge is new whenever $\Gamma$ is infinite and contains torsion):

\begin{corollary}
Let $\Gamma=\langle X\ |\ w\rangle$ be a finitely generated one-relator group and suppose $w$ has odd order when interpreted in $\Gamma$, then $\Gamma$ satisfies the Unstable GLR Conjecture.
\end{corollary}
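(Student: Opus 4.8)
The plan is to derive the corollary as a direct instance of Theorem~\ref{thm.GLR.body}, so the entire task reduces to checking that finitely generated one-relator groups (with the stated odd-order condition) meet all of its hypotheses. Three of the four conditions are immediate from the literature: by \cite{pchain} every one-relator group satisfies (BC), (M) and (NM) and admits a two-dimensional model for $\underbar{E}\Gamma$. Since $\Gamma$ is finitely generated it is finitely presented, the defining relation being a single word, so this model can be taken cocompact; hence $\underbar{B}\Gamma$ is a finite complex of dimension at most $2$, well inside the required bound of $9$. What remains is condition (PFS): that every maximal finite subgroup of $\Gamma$ has odd order and periodic cohomology.

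To establish (PFS) I would invoke the classical description of torsion in one-relator groups. Writing the relator as $w=v^{m}$ with $v$ cyclically reduced and not a proper power, the group $\Gamma$ is torsion-free when $m=1$, while for $m\geq 2$ every finite subgroup is conjugate into the cyclic group $\langle v\rangle\cong\ZZ_{m}$, which is (up to conjugacy) the unique maximal finite subgroup. Thus all finite subgroups of $\Gamma$ are cyclic, and cyclic groups automatically have periodic cohomology, so the periodic-cohomology half of (PFS) is free. The hypothesis that $w$ has odd order amounts exactly to requiring the exponent $m$ to be odd, whence the maximal finite subgroups have odd order. This verifies (PFS), and with all hypotheses of Theorem~\ref{thm.GLR.body} in hand the Unstable GLR Conjecture for $\Gamma$ follows at once.

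The only substantive step is the verification of (PFS) through the structure of finite subgroups, and it is precisely there, and nowhere else, that the odd-order hypothesis is consumed; once the finite subgroups are known to be cyclic, the periodic-cohomology condition costs nothing. I anticipate no serious obstacle in the remaining points: (BC), (M), (NM) and the two-dimensional model are quoted wholesale from \cite{pchain}, and the dimension inequality $2\leq 9$ is trivial. The single point deserving a sentence of care is the upgrade from finite generation to a genuinely \emph{cocompact} two-dimensional model, ensuring $\underbar{B}\Gamma$ is finite; this is exactly where finite presentation of the one-relator group enters, and it is the reason finite generation is recorded as a hypothesis.
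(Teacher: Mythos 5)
Your proposal is correct and follows essentially the same route as the paper, which likewise quotes \cite[Page~32]{pchain} for (BC), (M), (NM) and the cocompact two-dimensional model and then applies Theorem~\ref{thm.GLR.body}. The only difference is that you spell out what the paper leaves implicit: the Karrass--Magnus--Solitar description of torsion (every finite subgroup conjugate into $\langle v\rangle\cong\ZZ_m$ for $w=v^m$), which correctly delivers (PFS) and is exactly where the odd-order hypothesis is used.
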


\subsubsection{Hilbert modular groups}
Let $k$ be a totally real number field of degree $n$ and $\calo_k$ be its ring of integers. The Hilbert modular group of $k$ is defined to be $\PSL_2(\calo_k)$ and is a lattice in $\PSL_2(\RR)^n$. Note that if $k=\QQ$ then we recover the classical modular group $\PSL_2(\ZZ)$. Properties (BC), (M) and (NM) are given in \cite[Lemma 4.3]{BS2016}.  Applying Theorem~\ref{thm.GLR} we obtain the following new result:

\begin{corollary}
Let $k$ be a totally real number field with degree less than or equal to $4$.  Let $\Gamma\leq\PSL_2(\calo_k)$ be finitely presented.  If all finite subgroups of $\Gamma$ are cyclic of odd order then $\Gamma$ satisfies the Unstable GLR Conjecture.
\end{corollary}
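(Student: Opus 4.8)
The plan is to verify the hypotheses of Theorem~\ref{thm.GLR.body} for $\Gamma$ and then apply it. Property (PFS) is immediate: by assumption every finite subgroup of $\Gamma$ is cyclic of odd order, so in particular every \emph{maximal} finite subgroup is cyclic of odd order, and a cyclic group has periodic (indeed period two) cohomology. Properties (BC), (M) and (NM) are recorded for the Hilbert modular group $\PSL_2(\calo_k)$ itself in \cite[Lemma~4.3]{BS2016}, so the first task is to pass them to the subgroup $\Gamma$.

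For (M) and (NM) I would argue directly from the ambient group. Given a finite subgroup $F\leq\Gamma$, it is contained in a unique maximal finite subgroup $M$ of $\PSL_2(\calo_k)$; one checks that $M\cap\Gamma$ is then the unique maximal finite subgroup of $\Gamma$ containing $F$, giving (M). For (NM), if $g\in\Gamma$ normalises $M\cap\Gamma$ then it normalises the unique maximal finite subgroup $M$ of $\PSL_2(\calo_k)$ containing it, whence $g\in N_{\PSL_2(\calo_k)}(M)=M$ and so $g\in M\cap\Gamma$. For (BC) the cleanest route is to note that $\PSL_2(\calo_k)$ embeds as a discrete subgroup of $\PSL_2(\RR)^n$, which has the Haagerup property; since the Haagerup property passes to subgroups, $\Gamma$ has it too and hence satisfies Baum--Connes by Higson--Kasparov \cite{HigsonKasparov1997} (alternatively one cites \cite[Lemma~4.3]{BS2016} together with the fact that Baum--Connes descends to subgroups in this setting).

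It remains to produce a model for $\underbar{E}\Gamma$ that is finite of dimension at most $9$. The group acts properly by isometries on the symmetric space $X=(\RR\mathbf{H}^2)^n$, a complete $\mathrm{CAT}(0)$ manifold of real dimension $2n$; since finite subgroups fix nonempty convex subsets, $X$ is a model for $\underbar{E}\Gamma$, and the Borel--Serre bordification $\overline{X}$ is a cocompact model of the same dimension $2n$. As $n=[k:\QQ]\leq 4$ we obtain $\dim\underbar{B}\Gamma=2n\leq 8\leq 9$; note that the hypothesis $\deg k\leq 4$ is precisely what keeps this dimension below the bound in Theorem~\ref{thm.GLR.body}. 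Assembling the four properties with this dimension bound yields the conclusion.

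The main obstacle is the finiteness of $\underbar{B}\Gamma$ for a subgroup that is merely finitely presented rather than of finite index. For finite-index subgroups the Borel--Serre bordification settles this immediately. In general I would instead verify that $\Gamma$ admits a cocompact model for $\underbar{E}\Gamma$: the finite-dimensional model above supplies the geometric dimension, properties (M) and (NM) force the Weyl group $N_\Gamma(H)/H$ of every finite subgroup $H$ to be finite (hence of type $F$), and finite presentation together with the finiteness of the set of conjugacy classes of maximal finite subgroups provides the remaining finiteness over the orbit category. Controlling this last finiteness condition is the delicate point; once it is in hand, a cocompact model of dimension $2n\leq 8$ exists and Theorem~\ref{thm.GLR.body} applies.
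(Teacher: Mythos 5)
Your overall route is exactly the paper's: verify (BC), (M), (NM), (PFS) and the dimension hypothesis, then invoke Theorem~\ref{thm.GLR.body}. In fact you supply considerably more detail than the paper, whose entire proof is the one-line remark that every finite subgroup of $\PSL_2(\RR)^n$ is a product of finite cyclic groups, with (BC), (M) and (NM) quoted for the Hilbert modular group from \cite[Lemma~4.3]{BS2016}. Your descent of (M) and (NM) to the subgroup $\Gamma$ is correct (with the usual convention that these conditions concern nontrivial finite subgroups): any finite $F'\leq\Gamma$ containing a nontrivial finite $F$ lies in the unique maximal finite subgroup $M$ of the ambient group containing $F$, so $M\cap\Gamma$ serves as the unique maximal finite subgroup of $\Gamma$ containing $F$; and if $g\in\Gamma$ normalises $M\cap\Gamma$ then $gMg^{-1}=M$ by uniqueness, whence $g\in N(M)=M$. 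For (BC), your primary argument --- the Haagerup property of $\PSL_2(\RR)^n$ passes to the closed subgroup $\Gamma$, and Higson--Kasparov \cite{HigsonKasparov1997} applies --- is the right one; discard the parenthetical alternative, since the Baum--Connes conjecture without coefficients is not known to pass to subgroups in general, and it is precisely the Haagerup (or coefficients) route that legitimises descent to $\Gamma$.

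The one genuine gap, which you candidly flag yourself, is the hypothesis that $\underbar{B}\Gamma$ is \emph{finite}. The action on $(\RR\mathbf{H}^2)^n$ gives a proper model of $\underbar{E}\Gamma$ of dimension $2n\leq 8$, and the Borel--Serre bordification gives cocompactness for $\PSL_2(\calo_k)$ itself and its finite-index subgroups; but for an arbitrary finitely presented subgroup, cocompactness of some model of $\underbar{E}\Gamma$ is not a formal consequence of finite presentability (finitely presented subgroups of lattices in products can fail higher finiteness properties, as Stallings--Bieri-type examples in products of free groups warn, and your sketched argument through finiteness conditions over the orbit category is not completed). To be fair, the paper's own one-sentence proof is equally silent on this point, so you have matched and indeed exceeded its level of rigor everywhere else; but as a standalone argument your proposal proves the corollary only under the additional (implicit) assumption that $\underbar{B}\Gamma$ is finite --- for instance when $\Gamma$ has finite index in $\PSL_2(\calo_k)$ --- and otherwise leaves open precisely the step you identify.
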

\begin{proof}
This follows from the fact every finite subgroup of $\PSL_2(\RR)^n$ is a product of finite cyclic groups. 
\end{proof}

\subsubsection{Subgroups of $\PSLp$ for $p\equiv11\pmod{12}$}
In this section we will prove the result that many subgroups of $\PSLp$ for $p\equiv11\pmod{12}$ satisfy the conditions of Theorem~\ref{thm.GLR}.  The result is new whenever the subgroup is infinite, has torsion, and is not isomorphic to a Fuchsian group.  We will also compute the $KO$-theory of $C_r^\ast(\PSLp;\RR)$.

\begin{corollary}
Let $p\equiv11\pmod{12}$ and let $\Gamma<\PSLp$ be finitely presented.  If $\Gamma$ has no elements of order $2$, then $\Gamma$ satisfies the Unstable GLR Conjecture.
\end{corollary}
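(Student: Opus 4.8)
The plan is to verify that $\Gamma$ satisfies the four hypotheses (BC), (M), (NM), (PFS) of Theorem~\ref{thm.GLR.body} together with the requirement that $\underbar{B}\Gamma$ be finite of dimension at most $9$, and then to invoke that theorem directly. The structural input that makes everything work is that for $p\equiv11\pmod{12}$ the edge group $\Gamma_0(p)$ in the splitting $\PSLp=\PSL_2(\ZZ)\ast_{\Gamma_0(p)}\PSL_2(\ZZ)$ is free (Table~\ref{tab:BredonPGamma0}), hence torsion-free, so $\PSLp$ is an amalgam of virtually free groups over a free group. First I would pin down the finite subgroups: any finite subgroup of $\PSLp$ fixes a vertex of the Bass--Serre tree $T$ of the amalgam and so lies in a conjugate of a vertex group $\PSL_2(\ZZ)\cong\ZZ_2\ast\ZZ_3$, whose nontrivial finite subgroups are, by the Kurosh subgroup theorem, conjugates of $\ZZ_2$ or $\ZZ_3$. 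Since $\Gamma$ has no elements of order $2$, every nontrivial finite subgroup of $\Gamma$ is a $\ZZ_3$, which is of odd order and has periodic (cyclic) cohomology; this gives (PFS).

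For (M) and (NM) I would argue that $\PSLp$ satisfies both and that these properties pass to subgroups. That $\PSLp$ satisfies (M) and (NM) follows from the graph-of-groups criterion \cite[Theorem~3.1]{Saldana2020}, since the vertex group $\PSL_2(\ZZ)$ satisfies (M) and (NM) (its maximal finite subgroups are the self-normalising free factors $\ZZ_2$ and $\ZZ_3$) and the edge group $\Gamma_0(p)$ is torsion-free. To pass to $\Gamma\le\PSLp$: if $F\le\Gamma$ is a nontrivial finite subgroup, it lies in a unique maximal finite $\widetilde M\le\PSLp$, and $\widetilde M\cap\Gamma$ is then the unique maximal finite subgroup of $\Gamma$ containing $F$, giving (M); and if $M$ is maximal finite in $\Gamma$ with associated $\widetilde M$, then any $g\in N_\Gamma(M)$ conjugates the unique maximal finite subgroup $\widetilde M\supseteq M$ to itself, so $g\in N_{\PSLp}(\widetilde M)=\widetilde M$, whence $N_\Gamma(M)\subseteq\widetilde M\cap\Gamma=M$, giving (NM).

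For (BC) I would repeat the argument of Lemma~\ref{lem.BC}. Restricting the action on $T$ to $\Gamma$, the cell stabilisers are the groups $\Gamma\cap g\PSL_2(\ZZ)g^{-1}$ and $\Gamma\cap g\Gamma_0(p)g^{-1}$, each a subgroup of a graph of finite groups and hence itself a graph of finite groups with $\mathrm{cd}_\QQ\le1$. By \cite[Corollary~5.14]{MislinNotes} each such stabiliser satisfies the Baum--Connes conjecture, and \cite[Theorem~5.13]{MislinNotes} then yields (BC) for $\Gamma$, exactly as for $\PSLp$ itself.

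The main obstacle is the finiteness and dimension of $\underbar{B}\Gamma$, and this is precisely where finite presentation is used. The dimension is the easy part: by Bass--Serre theory $\Gamma$ is the fundamental group of a graph of groups whose vertex groups are virtually free and whose edge groups are free, and gluing the one-dimensional models $\underbar{E}$ of these (virtually free) groups along the tree gives a model for $\underbar{E}\Gamma$ of dimension at most $2\le9$. For finiteness I would show the action is cocompact: finite generation of $\Gamma$ furnishes a minimal invariant subtree $T_{\min}\subseteq T$ with $\Gamma\backslash T_{\min}$ a finite graph, and finite presentation forces the edge groups, and hence the vertex groups, of the resulting finite graph of groups to be finitely generated. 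Finitely generated virtually free and free groups admit finite one-dimensional models for $\underbar{E}$, so the associated tree of spaces over the finite graph $\Gamma\backslash T_{\min}$ is a cocompact two-dimensional model for $\underbar{E}\Gamma$; that is, $\underbar{B}\Gamma$ is finite of dimension at most $2$. (The degenerate cases in which $\Gamma$ fixes a vertex or an end of $T$ only lower the dimension, since then $\Gamma$ is virtually free.) Having verified (BC), (M), (NM), (PFS) and the finiteness of $\underbar{B}\Gamma$, Theorem~\ref{thm.GLR.body} applies and $\Gamma$ satisfies the Unstable GLR Conjecture.
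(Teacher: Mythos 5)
Your overall route is the same as the paper's: verify the hypotheses of Theorem~\ref{thm.GLR.body} using the splitting $\PSLp=\PSL_2(\ZZ)\ast_{\Gamma_0(p)}\PSL_2(\ZZ)$ with torsion-free edge group (Table~\ref{tab:BredonPGamma0}), the observation that every finite subgroup of $\PSLp$ is cyclic of order $2$ or $3$ (hence, absent $2$-torsion, odd with periodic cohomology), and \cite[Theorem~3.1]{Saldana2020} for (M) and (NM). In fact you supply several details the paper leaves implicit and which genuinely need saying: the descent of (M) and (NM) from $\PSLp$ to the subgroup $\Gamma$ (your uniqueness-plus-normaliser chase, showing $g\widetilde{M}g^{-1}=\widetilde{M}$ and then $N_\Gamma(M)\subseteq\widetilde{M}\cap\Gamma=M$, is correct), and (BC) for $\Gamma$ by rerunning Lemma~\ref{lem.BC} on the restricted tree action --- the right move, since plain Baum--Connes is not known to pass to arbitrary subgroups.

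The one genuine gap is in your cocompactness step. The assertion that ``finite presentation forces the edge groups, and hence the vertex groups, of the resulting finite graph of groups to be finitely generated'' is false as a general principle: writing $F_2=K\rtimes\ZZ$ with $K=\Ker(F_2\twoheadrightarrow\ZZ)$ exhibits a finitely presented (indeed free) group acting minimally and cocompactly on a tree (a line) whose vertex and edge stabilisers are infinitely generated free groups. The standard fact runs the other way: for a cocompact action of a finitely generated group, finitely generated \emph{edge} stabilisers force finitely generated vertex stabilisers; your chain derives vertex from edge, but the input ``finitely presented $\Rightarrow$ edge stabilisers finitely generated'' is exactly what needs proof. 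To close this you would need either a Bieri--Strebel-type splitting theorem whose hypotheses (finitely generated vertex groups) you have verified independently of the edge groups, or an argument special to the $\PSLp$-tree (for instance, the line pathology above cannot occur there, since pointwise stabilisers of bi-infinite geodesics in that tree meet $\PSLp$ trivially --- but this does not rule out larger minimal subtrees with infinitely generated edge stabilisers). In fairness, the paper's own proof is completely silent on the finiteness and dimension of $\underbar{B}\Gamma$ --- it simply invokes Theorem~\ref{thm.GLR.body} after the (M)/(NM) lemma --- so you are attempting to justify a hypothesis the paper takes for granted; but as written your justification appeals to a false general statement and so leaves a hole.
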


There are many finite index subgroups of $\PSLp$ for $p\equiv11\pmod{12}$ satisfying the hypothesis of the corollary.  Indeed, by the amalgamated free product decomposition, $\PSLp$ is generated by the two subgroups isomorphic to $\PSL_2(\ZZ)\cong\ZZ_2\ast\ZZ_3$.  Thus, $\PSLp$ is a four generated group $\langle a,b,c,d\rangle$ where $a$ and $c$ have order $2$, and $b$ and $d$ have order $3$.  The kernel of the homomorphism $\phi:\PSLp\rightarrow\ZZ_2$ by $a,c\mapsto1$ and $b,d\mapsto 0$ is a finite index subgroup of $\PSLp$ with no $2$-torsion.

\begin{proof}
The proof follows from applying Theorem~\ref{thm.GLR} to the observation that every finite subgroup of $\PSLp$ is cyclic and hence has periodic cohomology and the following lemma.  Note that $\Gamma$ is necessarily a proper subgroup since $\PSLp$ always contains elements of order $2$.
\end{proof}

\begin{lemma}
Let $p$ be a prime, then $\PSLp$ satisfies (M).  Moreover, if $p\equiv11\pmod{12}$ then $\PSLp$ satisfies (NM).
\end{lemma}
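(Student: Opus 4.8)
The plan is to exploit the amalgamated free product decomposition $\PSLp=\PSL_2(\ZZ)\ast_{\Gamma_0(p)}\PSL_2(\ZZ)$ and the associated action on the Bass--Serre tree $\calt$, which is without inversions and has vertex stabilisers the conjugates of the two copies of $\PSL_2(\ZZ)$ and edge stabilisers the conjugates of $\Gamma_0(p)$. First I would classify the finite subgroups: since a finite group acting without inversions on a tree fixes a vertex, every finite subgroup of $\PSLp$ is subconjugate to a vertex group $\PSL_2(\ZZ)\cong\ZZ_2\ast\ZZ_3$, and by the Kurosh subgroup theorem every finite subgroup of $\ZZ_2\ast\ZZ_3$ is conjugate into $\ZZ_2$ or $\ZZ_3$. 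Hence the only finite subgroups of $\PSLp$ are the trivial group and conjugates of $\ZZ_2$ and $\ZZ_3$, and the maximal finite subgroups are exactly these nontrivial cyclic groups of prime order.

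Property (M), understood as usual as a condition on nontrivial finite subgroups, is then immediate: since there are no finite subgroups of order $4$, $6$, or higher, every nontrivial finite subgroup is already maximal, and so is contained in exactly one maximal finite subgroup, namely itself. This argument is uniform in $p$, giving (M) for every prime.

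For (NM) I would use the hypothesis $p\equiv11\pmod{12}$ precisely to control the edge stabilisers. By the signature computation of Table~\ref{tab:BredonPGamma0}, when $p\equiv11\pmod{12}$ the group $\Gamma_0(p)$ has signature $[0,\frac16(p+7)+1;]$ and is therefore free, hence torsion-free; consequently every edge stabiliser of $\calt$ is torsion-free. Let $M$ be a maximal finite subgroup; it fixes a vertex $v$ but, being nontrivial, it fixes no edge since an edge stabiliser is torsion-free. As the fixed-point set $\mathrm{Fix}(M)$ is a subtree and fixing two distinct vertices forces fixing the geodesic edge-path between them, $M$ fixes $v$ and no other vertex. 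Any $\gamma\in N_{\PSLp}(M)$ satisfies $\gamma\cdot\mathrm{Fix}(M)=\mathrm{Fix}(\gamma M\gamma^{-1})=\mathrm{Fix}(M)=\{v\}$, so $\gamma$ fixes $v$ and thus lies in $\mathrm{Stab}(v)\cong\PSL_2(\ZZ)$. This reduces the computation to $N_{\PSL_2(\ZZ)}(M)$, where $M$ is a conjugate of a free factor $\ZZ_2$ or $\ZZ_3$ of $\ZZ_2\ast\ZZ_3$; a standard fact about free products gives that the normaliser of a conjugate of a free factor is that subgroup itself, so $N_{\PSLp}(M)=M$.

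The main obstacle is the reduction in the (NM) argument: everything hinges on showing that a nontrivial finite subgroup fixes a \emph{unique} vertex of $\calt$, which is exactly where the congruence condition enters through the torsion-freeness of $\Gamma_0(p)$ for $p\equiv11\pmod{12}$. For the other residue classes $\Gamma_0(p)$ carries torsion, so a finite subgroup may be shared between two vertex groups across an edge and the normaliser can be strictly larger than $M$; this is why (NM) is asserted only in this case. The remaining point is the classical normaliser computation in a free product, which can itself be carried out on the free-product tree for $\ZZ_2\ast\ZZ_3$ in the same spirit.
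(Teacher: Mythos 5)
Your proof is correct and takes essentially the same route as the paper: (M) is immediate because every nontrivial finite subgroup of $\PSLp$ has prime order $2$ or $3$, and (NM) for $p\equiv11\pmod{12}$ rests on exactly the two facts the paper uses, namely that the vertex groups $\PSL_2(\ZZ)\cong\ZZ_2\ast\ZZ_3$ satisfy (NM) and that the amalgamated subgroup $\Gamma_0(p)$ is torsion-free. The only difference is that where the paper invokes the combination theorem \cite[Theorem~3.1]{Saldana2020} as a black box, you reprove that step directly on the Bass--Serre tree (uniqueness of the fixed vertex of a nontrivial finite subgroup, normaliser preserving the fixed-point set, and the normaliser of a finite free factor in a free product), which is a sound, self-contained unpacking of the same argument.
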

\begin{proof}
Since each non-trivial finite subgroup of $\Gamma$ is of order $2$ or $3$ it is obvious that $\Gamma$ satisfies (M).  Now, assume $p\equiv11\pmod{12}$ and note that $\PSL_2(\ZZ)\cong\ZZ_2\ast\ZZ_3$ satisfies (NM).  Recall the amalgamated free product decomposition, $\PSLp=\PSL_2(\ZZ)\ast_{\Gamma_0(p)}\PSL_2(\ZZ)$.  The amalgamated subgroup $\Gamma_0(p)$ is torsion-free so we may apply \cite[Theorem~3.1]{Saldana2020}.
\end{proof}

An alternative direct proof of the calculations of the $K$-groups of $C_r^\ast(\PSLp)$ when $p\equiv11\pmod{12}$ is as follows.  Note that this bypasses the computation of the Bredon homology but does not give us a way to compute either invariant for $\SLp$.

\begin{lemma}
Let $\Gamma=\PSLp$, then $\underbar{\emph{B}}\Gamma\simeq \bigvee_{b_1(\Gamma_0(p))}S^2$ 
\end{lemma}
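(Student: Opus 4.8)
The plan is to realise $\underbar{B}\Gamma$ as a homotopy pushout coming from the amalgamated free product decomposition and then to identify the three spaces involved. Recall from Lemma~\ref{lem.BC} that $\Gamma=\PSL_2(\ZZ)\ast_{\Gamma_0(p)}\PSL_2(\ZZ)$ acts on the associated Bass-Serre tree $T$ with quotient a single edge, the two vertex stabilisers conjugate to $\PSL_2(\ZZ)$ and the edge stabiliser conjugate to $\Gamma_0(p)$. First I would invoke the standard tree-of-spaces construction for graphs of groups: replacing each vertex orbit by a copy of $\underbar{E}\PSL_2(\ZZ)$ and the edge orbit by $\underbar{E}\Gamma_0(p)\times[0,1]$ produces a $\Gamma$-CW model for $\underbar{E}\Gamma$ that sits in a $\Gamma$-pushout whose horizontal leg is a $\Gamma$-cofibration, hence a $\Gamma$-homotopy pushout.

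Next I would pass to orbit spaces. The orbit functor $(-)/\Gamma$ preserves pushouts and carries $\Gamma$-cofibrations to cofibrations, so the square of quotients is again a homotopy pushout. Since $\Gamma_0(p)$ is torsion-free we have $\underbar{E}\Gamma_0(p)=E\Gamma_0(p)$ and $\underbar{B}\Gamma_0(p)=B\Gamma_0(p)$, and I obtain that $\underbar{B}\Gamma$ is the double mapping cylinder of
\[
\underbar{B}\PSL_2(\ZZ)\longleftarrow B\Gamma_0(p)\longrightarrow\underbar{B}\PSL_2(\ZZ),
\]
where the two maps are induced by the two embeddings $\Gamma_0(p)\hookrightarrow\PSL_2(\ZZ)$.

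I would then identify the three pieces. Since $\PSL_2(\ZZ)\cong\ZZ_2\ast\ZZ_3$, its own Bass-Serre tree is a model for $\underbar{E}\PSL_2(\ZZ)$ and the quotient is a single edge, so $\underbar{B}\PSL_2(\ZZ)\simeq\ast$. For the edge group, when $p\equiv11\pmod{12}$ Table~\ref{tab:BredonPGamma0} shows that $\Gamma_0(p)$ is free of rank $\tfrac16(p+7)=b_1(\Gamma_0(p))$, whence $B\Gamma_0(p)\simeq\bigvee_{b_1(\Gamma_0(p))}S^1$. A double mapping cylinder in which both target spaces are contractible is precisely the unreduced suspension of the source, and for a CW complex the unreduced and reduced suspensions agree, so
\[
\underbar{B}\Gamma\simeq\Sigma\!\left(B\Gamma_0(p)\right)\simeq\Sigma\!\left(\bigvee_{b_1(\Gamma_0(p))}S^1\right)\simeq\bigvee_{b_1(\Gamma_0(p))}S^2,
\]
as required.

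I expect the only delicate point to be the justification that the tree-of-spaces model for $\underbar{E}\Gamma$ descends on orbit spaces to the stated homotopy pushout, i.e.\ that the defining $\Gamma$-pushout is along a $\Gamma$-cofibration so that quotienting yields a genuine homotopy pushout, together with the verification that $\underbar{B}\PSL_2(\ZZ)$ is contractible. Both facts are standard consequences of the $\ZZ_2\ast\ZZ_3$ structure and the graph-of-groups description, and once they are in place the identification with $\bigvee S^2$ is immediate and independent of the specific gluing maps.
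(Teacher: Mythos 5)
Your overall strategy coincides with the paper's: realise $\underbar{E}\Gamma$ as a $\Gamma$-homotopy pushout coming from the splitting $\PSLp=\PSL_2(\ZZ)\ast_{\Gamma_0(p)}\PSL_2(\ZZ)$, pass to orbit spaces to exhibit $\underbar{B}\Gamma$ as a double mapping cylinder whose two ends are $\underbar{B}\PSL_2(\ZZ)\simeq\ast$ (the paper notes that $\underbar{E}\PSL_2(\ZZ)/\PSL_2(\ZZ)$ is an interval), and conclude that $\underbar{B}\Gamma$ is the suspension of the middle space. All of this is sound, including the cofibrancy point you rightly flag as the delicate step.

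The one genuine defect is scope. The lemma is stated for an arbitrary prime $p$, but your identification of the middle space invokes torsion-freeness of $\Gamma_0(p)$, namely $\underbar{B}\Gamma_0(p)=B\Gamma_0(p)\simeq\bigvee_{b_1(\Gamma_0(p))}S^1$, and this holds only when $p\equiv11\pmod{12}$. For $p=2$, say, $\Gamma_0(2)$ has signature $[0,2;2]$ and contains $2$-torsion, so $B\Gamma_0(p)$ is not homotopy equivalent to a finite graph and this step fails. The repair, which is what the paper does, is to avoid $B\Gamma_0(p)$ altogether: for every prime $p$ the Bass-Serre tree of the free-product splitting of $\Gamma_0(p)$ (Figure~\ref{fig.GoG.CofinFuch}) is a model for $\underbar{E}\Gamma_0(p)$, so $\underbar{B}\Gamma_0(p)=\underbar{E}\Gamma_0(p)/\Gamma_0(p)$ is a finite graph; its first Betti number is $s-1=b_1(\Gamma_0(p))$, since the marked points of the signature contribute only leaf edges and torsion contributes nothing to the free rank of $H_1$. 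With $\underbar{B}\Gamma_0(p)\simeq\bigvee_{b_1(\Gamma_0(p))}S^1$ as the middle term of your double mapping cylinder, the remainder of your argument --- a double mapping cylinder with contractible ends is the unreduced suspension, and $\Sigma\bigl(\bigvee_{b_1(\Gamma_0(p))}S^1\bigr)\simeq\bigvee_{b_1(\Gamma_0(p))}S^2$ --- goes through verbatim for every prime $p$, recovering the full statement.
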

\begin{proof}
We have 
\begin{align*}
    \underbar{B}\Gamma &\simeq \underset{\Top}{\hocolim}\left((\underbar{E}\PSL_2(\ZZ)\times_{\PSL_2(\ZZ)}\Gamma) \leftarrow (\underbar{E}\Gamma_0(p)\times_{\Gamma_0(p)}\Gamma) \rightarrow (\underbar{E}\PSL_2(\ZZ)\times_{\PSL_2(\ZZ)}\Gamma) \right)/\Gamma,\\
    &\simeq \underset{\Top}{\hocolim}\left((\underbar{E}\PSL_2(\ZZ)\times_{\PSL_2(\ZZ)}\Gamma)/\Gamma \leftarrow (\underbar{E}\Gamma_0(p)\times_{\Gamma_0(p)}\Gamma)/\Gamma \rightarrow (\underbar{E}\PSL_2(\ZZ)\times_{\PSL_2(\ZZ)}\Gamma)/\Gamma \right).\\
    \intertext{Since $\underbar{E}\PSL_2(\ZZ)/\PSL_2(\ZZ)$ is an interval and $\underbar{E}\Gamma_0(p)/\Gamma_0(p)$ is a finite graph, we have}
    \underbar{B}\Gamma&\simeq \underset{\Top}{\hocolim}\left(I\leftarrow \bigvee_{b_1(\Gamma_0(p))}S^1\rightarrow I\right),
\end{align*}
but $I$ is contractible, so up to homotopy this becomes a suspension of a wedge of circles.  In particular, $\underbar{B}\Gamma\simeq \bigvee_{b_1(\Gamma_0(p))}S^2$.
\end{proof}

\begin{thm}
Let $p\equiv11\pmod{12}$ and let $\Gamma=\PSLp$, then \[K_0^{\rm{top}}(C_r^\ast(\Gamma))=\ZZ^{7+\frac{1}{6}(p+7)}\quad \text{and}\quad K_1^{\rm{top}}(C_r^\ast(\Gamma))=0.\]
\end{thm}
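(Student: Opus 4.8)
The plan is to sidestep Bredon homology entirely and instead run the long exact sequence of the proposition above (the one used in the proof of Theorem~\ref{thm.GLR.body}) with the complex $K$-theory spectrum $\mathbf{K}$ in place of $\KO$. Since $\Gamma=\PSLp$ satisfies (BC) by Lemma~\ref{lem.BC}, and satisfies (M) and (NM) when $p\equiv11\pmod{12}$ by the lemma above, the diagram chase through \cite{pchain} that produced the $\KO$-sequence applies verbatim to any constant functor $\orbf\to\spectra$, and in particular to $\mathbf{K}$. This yields an exact sequence
\[\cdots\to\widetilde{K}_{n+1}(\underbar{B}\Gamma)\to\bigoplus_{(H)\in\Lambda}\widetilde{K}_n^{\rm top}(C_r^\ast(H))\to\widetilde{K}_n^{\rm top}(C_r^\ast(\Gamma))\to\widetilde{K}_n(\underbar{B}\Gamma)\to\cdots,\]
where $\Lambda$ is the set of conjugacy classes of maximal finite subgroups, the outer terms are reduced ordinary complex $K$-homology of the space $\underbar{B}\Gamma$, and $\widetilde{(-)}$ denotes reduction relative to the basepoint. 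The whole gain is that the left-hand middle term simplifies: for $H$ finite, $\widetilde{K}_n^{\rm top}(C_r^\ast(H))=\widetilde{K}_n^{\rm top}(\CC H)$ is the reduced representation ring $\widetilde{\repc}(H)$ for $n$ even and vanishes for $n$ odd, so one never meets the awkward $K$-homology of the infinite spaces $BH$.

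First I would feed in the two outer inputs. By the previous lemma $\underbar{B}\Gamma\simeq\bigvee_{b}S^2$ with $b=b_1(\Gamma_0(p))=\tfrac{1}{6}(p+7)$, so Bott periodicity gives $\widetilde{K}_0(\underbar{B}\Gamma)=\ZZ^{b}$ and $\widetilde{K}_1(\underbar{B}\Gamma)=0$. For the middle term, every maximal finite subgroup is cyclic, and $\widetilde{\repc}(\ZZ_m)\cong\ZZ^{m-1}$.

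Next I would determine $\Lambda$. Since $p\equiv11\pmod{12}$ the amalgamated subgroup $\Gamma_0(p)$ is torsion-free, so the fusion argument of Lemma~\ref{lem.ccl.psl} identifies no finite subgroups of the two vertex copies of $\PSL_2(\ZZ)$ in $\Gamma$. As $\PSL_2(\ZZ)\cong\ZZ_2\ast\ZZ_3$ contributes one conjugacy class of $\ZZ_2$ and one of $\ZZ_3$, the group $\Gamma$ has exactly two classes of each, whence $\sum_{(H)\in\Lambda}(|H|-1)=2\cdot1+2\cdot2=6$ and $\bigoplus_{(H)\in\Lambda}\widetilde{K}_0^{\rm top}(C_r^\ast(H))\cong\ZZ^6$.

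Finally I would read the sequence off degree by degree. In odd degrees every term but $\widetilde{K}_1^{\rm top}(C_r^\ast(\Gamma))$ vanishes, forcing $K_1^{\rm top}(C_r^\ast(\Gamma))=\widetilde{K}_1^{\rm top}(C_r^\ast(\Gamma))=0$. In even degrees it collapses to $0\to\ZZ^6\to\widetilde{K}_0^{\rm top}(C_r^\ast(\Gamma))\to\ZZ^{b}\to0$, which splits because $\ZZ^{b}$ is free; passing from the reduced to the unreduced group adds back $\pi_0(\mathbf{K})=\ZZ$, so $K_0^{\rm top}(C_r^\ast(\Gamma))=\ZZ^{1+6+b}=\ZZ^{7+\frac{1}{6}(p+7)}$. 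The main obstacle is the first step: one must check that the splitting results of \cite{pchain} and the identification $\pi_n(\hocolim_{\orbf}\mathbf{K}_c)\cong\widetilde{K}_n(\underbar{B}\Gamma)$ carry over from $\KO$ to $\mathbf{K}$ unchanged, and that the reduced middle term is genuinely the representation ring; after that the only arithmetic is the subgroup count, which rests entirely on the torsion-freeness of $\Gamma_0(p)$ for $p\equiv11\pmod{12}$.
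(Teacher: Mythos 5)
Your proposal is correct and essentially the paper's own argument: the paper likewise combines the Davis--L\"uck splitting \cite[Theorem~4.1(a)]{pchain} (which applies to complex $K$-theory directly, so the transport from $\KO$ to $\mathbf{K}$ you flag as the main obstacle is settled simply by citation) with the homotopy equivalence $\underbar{B}\Gamma\simeq\bigvee_{\frac{1}{6}(p+7)}S^2$ and the count of conjugacy classes of maximal finite subgroups (two classes each of $\ZZ_2$ and $\ZZ_3$, giving the $\ZZ^6$ term, consistent with Table~\ref{tab:FiniteSubgroupsPSL}). The only cosmetic difference is that the paper quotes the resulting short exact sequence with the unreduced group $K_n(\underbar{B}\Gamma)=\ZZ^{\frac{1}{6}(p+7)+1}$ on the right, whereas you work reduced throughout and add back $\pi_0(\mathbf{K})=\ZZ$ at the end.
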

\begin{proof}
Let $\Lambda$ be a set of representatives of finite subgroups of $\Gamma$.  By \cite[Theorem~4.1(a)]{pchain} we have a short exact sequence
\[0\rightarrow \underset{(H)\in\Lambda}{\bigoplus}\widetilde{K}_n^{\rm{top}}(C_r^\ast(H))\rightarrow K_n^{\rm{top}}(C_r^\ast(\Gamma))\rightarrow K_n(\underbar{B}\Gamma)\rightarrow 0. \]
The only nontrivial part now is computing $K_n(\underbar{B}\Gamma)$, but we have already shown that $\underbar{B}\Gamma$ is homotopy equivalent to a wedge of $\bigvee_{b_1(\Gamma_0(p))}S^2$, i.e. a wedge of $2$-spheres.  Thus, we can simply apply the homological Atiyah-Hirzebruch spectral sequence (which collapses trivially) to obtain that $K_0(\underbar{B}\Gamma)=\ZZ^{\frac{1}{6}(p+7)+1}$ and $K_1(\underbar{B}\Gamma)=0$.
\end{proof}

A near identical argument can be used to compute the $KO$-groups of $C_r^\ast(\PSLp)$ when $p\equiv11\pmod{12}$.

\begin{thm}
Let $p\equiv11\pmod{12}$ be a prime and $\Gamma=\PSLp$.  Except for an extension problem in dimensions congruent to $1,3$ and $4$ modulo $8$, we have for $n=0,\dots,7$ that
\[KO_n^{\rm{top}}(C_r^\ast)=\ZZ^5,\quad \ZZ_2^3,\quad \ZZ^{2+\frac{1}{6}(p+7)}\oplus\ZZ_2^3,\quad \ZZ_2^{\frac{1}{6}(p+7)},\quad \ZZ^5\oplus\ZZ_2^{\frac{1}{6}(p+7)},\quad 0,\quad \ZZ^{2+\frac{1}{6}(p+7)},\quad 0 \]
and the remaining groups are given by $8$-fold Bott-periodicity.
\end{thm}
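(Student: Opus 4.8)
The plan is to rerun the preceding complex $K$-theory computation almost verbatim, replacing the spectrum $\mathbf{K}$ by $\KO$. Since $\Gamma=\PSLp$ satisfies (BC), (M) and (NM) for $p\equiv11\pmod{12}$ (the latter two by the lemma above), the bottom row of the proposition above specialises, with $\bfE=\KO$ and $\Lambda$ a set of conjugacy representatives of the maximal finite subgroups, to a long exact sequence
\[\cdots\to\widetilde{KO}_{n+1}(\underbar{B}\Gamma)\to\bigoplus_{(H)\in\Lambda}\widetilde{KO}_n^{\rm top}(C_r^\ast(H))\to\widetilde{KO}_n^{\rm top}(C_r^\ast(\Gamma))\to\widetilde{KO}_n(\underbar{B}\Gamma)\to\cdots,\]
with connecting maps I will call $\partial$. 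The unreduced groups are then recovered as $KO_n^{\rm top}(C_r^\ast(\Gamma))\cong\widetilde{KO}_n^{\rm top}(C_r^\ast(\Gamma))\oplus KO_n(\ast)$, the trivial representation splitting off the basepoint, so it suffices to identify the two outer terms and the maps $\partial$.

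For the right-hand term I would invoke the preceding lemma, giving $\underbar{B}\Gamma\simeq\bigvee_{b}S^2$ with $b=b_1(\Gamma_0(p))=\tfrac16(p+7)$; the homological Atiyah--Hirzebruch spectral sequence for a wedge of $2$-spheres collapses, so $\widetilde{KO}_n(\underbar{B}\Gamma)\cong KO_{n-2}(\ast)^{b}$, which for $n=0,\dots,7$ reads $0,0,\ZZ^b,\ZZ_2^b,\ZZ_2^b,0,\ZZ^b,0$. For the left-hand term I read the maximal finite subgroups off Table~\ref{tab:FiniteSubgroupsPSL}: as $\Gamma_0(p)$ is torsion-free no torsion fuses across the amalgam, so $\Lambda$ consists of two classes of $\ZZ_2$ and two of $\ZZ_3$ (the four order-$3$ element classes pairing as $g,g^{-1}$). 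From the Wedderburn splittings $\RR[\ZZ_2]\cong\RR\times\RR$ and $\RR[\ZZ_3]\cong\RR\times\CC$, deleting the trivial summand yields $\widetilde{KO}_n^{\rm top}(C_r^\ast(\ZZ_2))\cong KO_n(\RR)=KO_n(\ast)$ and $\widetilde{KO}_n^{\rm top}(C_r^\ast(\ZZ_3))\cong KO_n(\CC)\cong KU_n(\ast)$; summing two copies of each gives the left-hand term in every degree.

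Splicing these in, the free ranks are immediate, since rationalising the sequence kills all connecting maps for degree reasons; adding $KO_n(\ast)$ then forces the displayed answer in the degrees where an outer term vanishes or the relevant quotient is free. The hard part is precisely the indeterminacy flagged in the statement. In degree $n\equiv1$ the map $\partial\colon\widetilde{KO}_2(\underbar{B}\Gamma)=\ZZ^b\to\bigoplus_{(H)}\widetilde{KO}_1^{\rm top}(C_r^\ast(H))=\ZZ_2^2$ lands in $2$-torsion, and $KO_\ast(\ast)$-linearity forces the degree-$3$ connecting map $\ZZ_2^b\to\ZZ_2^2$ to be its multiple by the Hopf element $\eta\in KO_1(\ast)$; I do not expect the formal inputs above to evaluate $\partial$, as complexification to $KU$ annihilates exactly this $2$-torsion, and taking $\partial=0$ reproduces the tabulated groups as an associated graded. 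In degree $n\equiv4$ one is moreover left with a genuine extension $0\to\ZZ^4\to KO_4^{\rm top}(C_r^\ast(\Gamma))\to\ZZ_2^b\to0$ whose class in $\mathrm{Ext}(\ZZ_2^b,\ZZ^4)$ the sequence does not determine. These are the extension problems in dimensions $\equiv1,3,4\pmod 8$ asserted by the theorem.
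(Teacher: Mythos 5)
Your architecture is close to the paper's --- the lemma $\underbar{B}\Gamma\simeq\bigvee_{\frac{1}{6}(p+7)}S^2$, the Wedderburn splittings $\RR[\ZZ_2]\cong\RR\times\RR$ and $\RR[\ZZ_3]\cong\RR\times\CC$, and your count of $\Lambda$ (two classes of $\ZZ_2$, two of $\ZZ_3$, consistent with Table~\ref{tab:FiniteSubgroupsPSL}) are all exactly what the paper uses --- but there is a genuine gap at the decisive step. The paper does not run a long exact sequence with unknown connecting maps: it invokes \cite[Theorem~4.1(a)]{pchain}, which for a group satisfying (BC), (M), (NM) produces a genuinely \emph{short} exact sequence
\[0\rightarrow \underset{(H)\in\Lambda}{\bigoplus}\widetilde{KO}_n^{\rm{top}}(C_r^\ast(H))\rightarrow KO_n^{\rm{top}}(C_r^\ast(\Gamma))\rightarrow KO_n(\underbar{B}\Gamma)\rightarrow 0,\]
i.e.\ precisely the vanishing of the connecting maps you leave undetermined. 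You concede that your inputs cannot evaluate $\partial\colon\widetilde{KO}_2(\underbar{B}\Gamma)\cong\ZZ^{\frac{1}{6}(p+7)}\rightarrow\ZZ_2^2$ or its $\eta$-translate $\ZZ_2^{\frac{1}{6}(p+7)}\rightarrow\ZZ_2^2$, and then set $\partial=0$ by fiat. But if these maps were nonzero, the groups in degrees $n\equiv1,2\pmod 8$ would be honestly \emph{smaller} (proper subquotients of $\ZZ_2^3$, resp.\ of $\ZZ^{2+\frac{1}{6}(p+7)}\oplus\ZZ_2^3$), not merely differently extended --- and note that the second of these maps, which you misattribute to the flagged dimension $3$, in fact feeds into degree $2$, a degree the theorem pins down exactly with no extension caveat (in degree $3$ itself the subgroup term vanishes, so nothing is at stake there). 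So ``taking $\partial=0$ reproduces the tabulated groups'' is not a proof; the missing ingredient is exactly the split-injectivity packaged in \cite[Theorem~4.1(a)]{pchain}, used the same way in \cite{DP03} and in the complex-$K$ computation immediately preceding this theorem.

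A second, independent error: your claim that $KO_n^{\rm{top}}(C_r^\ast(\Gamma))\cong\widetilde{KO}_n^{\rm{top}}(C_r^\ast(\Gamma))\oplus KO_n(\ast)$ because ``the trivial representation splits off the basepoint'' fails for $\Gamma=\PSLp$. The group is non-amenable, so the trivial representation is not weakly contained in the regular representation and does not factor through the \emph{reduced} $C^\ast$-algebra; there is no splitting of the unit $\RR\rightarrow C_r^\ast(\Gamma;\RR)$. Indeed such a splitting would contradict the extension ambiguity in degree $n\equiv1\pmod 8$ that the theorem itself flags, where the sequence reads $0\rightarrow\ZZ_2^2\rightarrow KO_1^{\rm{top}}(C_r^\ast(\Gamma))\rightarrow\ZZ_2\rightarrow0$ with the quotient $\ZZ_2$ being the basepoint copy of $KO_1(\ast)\subseteq KO_1(\underbar{B}\Gamma)$, and this is not known to split. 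In the paper's sequence the basepoint contribution sits inside the unreduced quotient $KO_n(\underbar{B}\Gamma)\cong KO_n(\ast)\oplus KO_{n-2}(\ast)^{\frac{1}{6}(p+7)}$ and is only split off when it is free or zero --- which is exactly why the extension problems occur in the dimensions the statement lists rather than disappearing as your splitting would suggest.
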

\begin{proof}
Let $\Lambda$ be a set of representatives of finite subgroups of $\Gamma$.  As before, by \cite[Theorem~4.1(a)]{pchain} we have a short exact sequence
\[0\rightarrow \underset{(H)\in\Lambda}{\bigoplus}\widetilde{KO}_n^{\rm{top}}(C_r^\ast(H))\rightarrow KO_n^{\rm{top}}(C_r^\ast(\Gamma))\rightarrow KO_n(\underbar{B}\Gamma)\rightarrow 0. \]
Now, $KO_n(\underbar{B}\Gamma)\cong KO_n(\ast)\oplus KO_{n-2}(\ast)^{\frac{1}{6}(p+7)}$ and the groups $KO_n^{\ZZ_m}(\ast)\cong KO^{\rm{top}}_n(C_r^\ast(\ZZ_m))$ are given in \cite[Section~2.1]{fuentesrumi2019equivariant}.
\end{proof}

Our methods leave open the following.

\begin{question}
Let $p$ be a prime, then does $\PSL_2(\ZZ[\frac{1}{p}])$ satisfy the Unstable GLR Conjecture?  What about a lattice in $\PSL_2(\RR)\times\PSL_2(\QQ_p)$?
\end{question}

\end{document}